\newtheorem{lemma}{Lemma}[section]
\newtheorem{theorem}{Theorem}[section]
\newtheorem{corollary}{Corollary}[section]
\newtheorem{proof}{Proof}
\newtheorem{remark}{Remark}[section]
\newtheorem{example}{Example}[section]
\newtheorem{proposition}{Proposition}[section]
\newtheorem{problem}{Problem}[section]
\begin{document}

\title{Galerkin Method with Trigonometric Basis on Stable Numerical Differentiation}
\author{Yidong Luo}
\address{School of Mathematics and Statistics, Wuhan University, Hubei Province, P. R. China}
\ead{Sylois@whu.edu.cn}
\vspace{10pt}
\begin{indented}
\item[]August 2017
\end{indented}

\begin{abstract}
This paper considers the $ p $ ($ p=1,2,3 $) order numerical differentiation on function $ y $ in $ (0,2\pi) $. They are transformed into corresponding Fredholm integral equation of the first kind. Computational schemes with analytic solution formulas are designed using Galerkin method on trigonometric basis. Convergence and divergence are all analysed in Corollaries 5.1, 5.2, and a-priori error estimate is uniformly obtained  in Theorem 6.1, 7.1, 7.2. Therefore, the algorithm achieves the optimal convergence rate $ O( \delta^{\frac{2\mu}{2\mu+1}} ) \ (\mu = \frac{1}{2} \ \textrm{or}  \ 1)$ with periodic Sobolev source condition of order $ 2\mu p $. Besides, we indicate a noise-independent a-priori parameter choice when the function $ y $ possesses the form of
\begin{equation*}
 \sum^{p-1}_{k=0} a_k t^k + \sum^{N_1}_{k=1} b_k \cos k t + \sum^{N_2}_{k=1} c_k \sin k t, \ b_{N_1}, c_{N_2} \neq 0,
\end{equation*}
In particular, in numerical differentiations for functions above, good filtering effect (error approaches 0) is displayed with corresponding parameter choice. In addition, several numerical examples are given to show that even derivatives with discontinuity can be recovered well.
\end{abstract}
%
%
%
%
%

\section{Introduction }
Numerical differentiation is a classical ill-posed problem which arises in different practical fields, such as option pricing, thermodynamics and  photoelectric response (See e.g. [4,11,13-16,25]). In process of numerical differentiation on a given function $ y(x) $ of specific smoothness,
always there would interfuse with a noise $ \delta y $ in measurement or calculations.
For this sake, it is routine to do numerical differentiation on the noisy function $ y^\delta := y +\delta y $, where the high frequency part in $ \delta y $ would bring uncontrolled huge error when computing with traditional numerical algorithms. In order to overcome the difficulties of ill-posedness, several kinds of regularization method were introduced.
\newline \indent Tikhonov method (See [8,11,12,17,26-28]) is a classical regularization method for numerical differentiation of first order. It is generally modeled that, for $ x \in [0,1] $ with a grid $ \Delta = \{ 0= x_0 < x_1 < \cdots < x_n =1 \} $ and $ h = \max_{i} x_{i+1} -x_i $ being its mesh size, given finite noisy samples $ \tilde y_i $ of $ y(x_i) $ such that $ \vert \tilde y_i - y(x_i) \vert \leq \delta $. Assume that $ \tilde y_0, \ \tilde y_n $ are exactly known boundary data, that is, $ \tilde y_0 = y(0), \ \tilde y_n = y(1) $. Then minimizing the cost functional
\begin{equation*}
\Psi[f]:= \frac{1}{n-1} \sum^{n-1}_{i=1} (\tilde y_i - f(x_i))^2 + \alpha \Vert f'' \Vert^2
\end{equation*}
in $ \{ f \in H^2(0,1): f(0) = y(0), f(1) = y(0) \} $ gives minimizer $ f_\alpha $. Afterward differentiating this minimizer gives $ f'_\alpha $ as the regularized solution to the exact derivative $ y' $ with appropriate parameter choice $ \alpha = \alpha(\delta) $. Further results illustrate that $ f'_{\alpha(\delta)} $ can converge to $ y' $ with best rate $ O(\sqrt{\delta}) $ with parameter choice for $ \alpha = \delta^2 $ (See [28]). However, we note that the penalty term $ \Vert f'' \Vert $ in cost functional basically demand that the all candidate solutions $ f'_\alpha $ must be at least $ H^1 $ smooth and further result in [27,28] illustrates that, for $ y \in C[0,1] / H^2(0,1) $, under specific parameter choice $ \alpha = \delta^2 $, the upper bound for $ \Vert f'_{\delta^2} - y' \Vert $ must tend to infinity as $ \delta, h \to 0 $. Thus this algorithm naturally deny to recover derivative with regularity less than $ H^1 $, especially  discontinuous derivative.
 \newline \indent Difference method [4,23] is another classical regularization method for numerical differentiation (including higher orders). It constructs difference quotient of $ p $ order as regularized solution to exact derivatives $ y^{(p)} $ with the stepsize $ h $ being the regularization parameter. The convergence of this scheme is established in $ L^\infty $ setting and will basically demand that $ y' \in C^{0,\alpha}, \alpha > 0$ (See [4]) which also deny to recover derivatives that are only continuous and discontinuous. Furthermore, the best convergence rate $ O(\delta^{\frac{2}{3}}) $ and $ O(\delta^{\frac{1}{3}}) $ for first and second order numerical differentiation are derived with $ h = O(\delta^{\frac{1}{3}}) $ respectively. But we need note the essential flaw in this algorithm that the numerical derivatives constructed by this algorithm will lose its smoothness and all be piecewise constant, whether the original function is smooth or not.
\newline \indent In this paper, we first formulate the $ p $ order derivative $ y^{(p)} $ as the unique solution of Fredholm integral equation of the first kind
 \begin{equation}
 A^{(p)} \varphi := \frac{1}{(p-1)!}\int^x_0 (x-t)^{p-1} \varphi (t) dt = y(x), \  x \in (0,2\pi).
 \end{equation}
  where $ A^{(p)}: L^2(0,2\pi) \to L^2(0,2\pi) $. For the simple design of computational scheme, we apply Galerkin method with trigonometric basis to above equation to construct a regularized solutions to $ y^{(p)} $ (refer to [9,19-22] for similar techniques).  The basic setting can be described as below:
  \newline \indent Assume that
 \begin{equation*}
 y^\delta, \  y , \ \delta y \in  L^2 (0,2\pi) \  \textrm{and} \ \ \Vert \delta y \Vert_{L^2} \leq \delta,
  \end{equation*}
  where $ \delta $ is noise level. Given a projection sequence $ \{  P_n \} $ which project $ L^2(0, 2\pi) $ onto
 subspace
 \begin{equation*}
X_n := span \{ \frac{1}{\sqrt{2\pi}}, \frac{ \cos t}{\sqrt{\pi}}, \frac{ \sin t}{\sqrt{\pi}},
\cdots, \frac{ \cos n t}{\sqrt{\pi}}, \frac{ \sin n t}{\sqrt{\pi}} \},
 \end{equation*}
we discrete (1.1) into a finite-rank approximation system
 \begin{equation}
 A^{(p)}_n \varphi_n = y_n  \  \  \varphi_n \in X_n, y_n := P_n y \in X_n,
 \end{equation}
 where $  A^{(p)}_n:= P_n A^{(p)}P_n: X_n \longrightarrow X_n $.
\newline \indent
Now solving (1.2) in sense of Moore-Penrose inverse gives $ {A_n^{(p)}}^\dagger P_n y $. This is a natural approximation scheme for $ y^{(p)} $, where $ \dagger $ denotes the Moore-Penrose inverse of linear operator. Considering the noise $ \delta y $, above scheme should be adjusted to a regularized scheme
\begin{equation}
 {A_n^{(p)}}^\dagger P_n y^\delta  \ \textrm{with} \ n^{(p)} = n^{(p)} (\delta),
 \end{equation}
where $ n^{(p)}:= n^{(p)} (\delta) $ is the regularization parameter choice such that
\begin{equation*}
n^{(p)}:= n^{(p)} (\delta) \to +\infty, \delta \to 0^+
\end{equation*}
and
  \begin{equation*}
  \Vert {A^{(p)}_{n^{(p)}(\delta)}}^\dagger P_{n^{(p)}(\delta)} y^\delta - y^{(p)} \Vert_{L^2} \to 0,  \ \delta \to 0^+.
  \end{equation*}
  Here notice that $ n^{(p)}:= n^{(p)} (\delta) \ (p=1,2,3) $ stands for parameter choice strategy of the first three order numerical
  differentiation respectively. Throughout this paper, without special indication, we follow this notation and $ p \in \overline{1,2,3}$.
 \newline \indent Main results of this paper and corresponding remarks are listed as follows.
\begin{itemize}
\item For $ y \in {\mathcal{H}}^p_0(0,2\pi) $ (this restriction on initial value data is removable), where $ {\mathcal{H}}^p_0(0,2\pi):= \{ y \in  H^p(0,2\pi): y(0) = \cdots = y^{(p-1)}(0) = 0 \}. $
a priori error estimate is obtained uniformly for first three order numerical differentiation as
\begin{equation*}
\Vert {A^{(p)}_n}^\dagger P_n y^\delta - y^{(p)}\Vert_{L^2(0,2\pi)}
\leq C^{(p)} n^p \delta + (\gamma^{(p)}+1)\Vert (I-P_n)y^{(p)} \Vert_{L^2},
\end{equation*}
This determines the parameter choice strategy:
\begin{equation*}
n^{(p)} = n^{(p)}(\delta) = \kappa \delta^{a - \frac{1}{p}},
\end{equation*}
where $ a \in (0,\frac{1}{p}) $ is optional and $ \kappa $ is a constant which depends on the concrete form of
$ \Vert (I - P_n) y^{(p)} \Vert_{L^2} $. This establish a convergence result for numerical differentiation of first three order when $ y^{(p)} \in L^2 $, especially for derivative with discontinuities. However, we need specify that, when recovering $ y^{(p)} \in L^2 $ are only continuous and discontinuous with no periodic smoothness, the constant $ \kappa $ is unknown and need to test in experiments (See section 8.3). In addition we give a notice that, whether the derivative is smooth or not, its approximation by above algorithm will be real analytic since it is a trigonometric polynomial.
\end{itemize}
\begin{itemize}
\item Supplemented with a priori information
\begin{equation*}
 y^{(p)} \in H_{per}^l(0,2\pi), \  l>0 \  \  (\textrm{periodic smoothness})
 \end{equation*}
above error estimate is strengthened into a more explicit form as
\begin{equation*}
\Vert {A^{(p)}_n}^\dagger P_n y^\delta - y^{(p)}\Vert_{L^2(0,2\pi)}
\leq C^{(p)} n^p \delta + (\gamma^{(p)}+1)\frac{1}{n^{l}} \Vert y^{(p)} \Vert_{H_{per}^{l}},
\end{equation*}
where $ C^{(p)} $, $ \gamma^{(p)} $ are all independent constants given in proceeding sections.
 And optimal convergence rate $ O(\delta^{\frac{2\mu}{2\mu+1}}) $ can be derived under periodic Sobolev source condition
\begin{equation*}
 y^{(p)} \in H_{per}^{2 \mu p } (0,2\pi)\ \  \mu = \frac{1}{2} \ \textrm{or}  \ 1,
 \end{equation*}
 with parameter choice $ n^{(p)} = \lambda^{(p)} \delta^{- \frac{1}{(2\mu + 1)p}} $, where $ \lambda^{(p)} $ is a constant only depends on exact derivative $ y^{(p)}$ and can be given explicitly in preceding sections 6,7. In particular, when
  \begin{equation*}
  y^{(p)} = a_0 + \sum^{N_1}_{k=1} b_k \cos kx + \sum^{N_2}_{k=1} c_k \sin kx \in H^\infty_{per}(0,2\pi)
  \end{equation*}
  the optimal parameter choice will degenerate to a constant $ n = \max(N_1,N_2) $ which does not depend on noise. Furthermore, the numerical study in section 8.1 demonstrates the good filtering effect (error approaches 0) occurs in this specific case.
\item In a more general setting for $ p $ order numerical differentiation when $ y \in H^p(0,2\pi) $, it is indicated in Corollary 6.2 that, when $ y \in H^p (0,2\pi) \setminus {\mathcal{H}}^p_0(0,2\pi) $,
 \begin{equation*}
  \Vert {A^{(p)}_n}^\dagger P_n y\Vert_{L^2} \longrightarrow \infty \ (n \to \infty).
  \end{equation*}
Now any parameter choice $ n^{(p)} (\delta) $ such that
$  n^{(p)} = n^{(p)} (\delta) \to \infty \ (\delta \to 0^+) $ may not be a proper regularization parameter since we can not determine
  \begin{equation*}
  \Vert {A^{(p)}_{n^{(p)}(\delta)}}^\dagger P_{n^{(p)}(\delta)} y^\delta - y^{(p)} \Vert_{L^2} \to 0,  \ \delta \to 0^+.
  \end{equation*}
through traditional estimate any more (See second point behind Corollary 5.2). In order to recover the regularization effect of algorithm, we introduce Taylor polynomial truncation of $ p -1 $ order to reform the regularized scheme, that is, using
\begin{equation*}
\bar y = y(x) - \sum^{p-1}_{k=0} \frac{y^{(k)}(0)}{k!} x^k \in {\mathcal{H}}^p_0(0,2\pi),
\end{equation*}
to replace $ y \in H^p(0,2\pi)$. In this way, the regularization effect can be well recovered (See section 6,7)with exact measurements on initial value data. Furthermore, we take possible noise in measurements in initial value data into consideration, and this effectively relax the requirement on precision of initial value data.
\end{itemize}
 \textbf{\textrm{Outline of Paper}}:
 In section 2, we introduce some tools and basic lemmas.
  In section 3, we illustrate general framework, give the main idea on how to utilize the noisy data $ y^\delta $ to
  recover the $ p $ order derivatives $ y^{(p)} $. In section 4, we give corresponding analytic solution formula to Galerkin approximation system which determines the well-posedness result and upper bound for noise error.
 In section 5, we propose an estimate on approximation error when RHS $ y $ belongs to $ \mathcal{H}^p_0 (0,2\pi) $, and give the convergence and divergence results with respect to $ y \in \mathcal{H}^p_0 (0,2\pi) $ and $ y \in L^2(0,2 \pi) \setminus \mathcal{H}^p_0 (0,2\pi) $ respectively.
 In sections 6 and 7, with periodic Sobolev source condition of order $ 2 \mu p $, we construct a priori error estimate and indicate the parameter choice strategy for optimal convergence rate $ O(\delta^{\frac{2\mu}{2\mu+1}})$ when  $ y \in \mathcal{H}^p_0 (0,2\pi) $ and $ y \in H^p(0,2\pi) \setminus \mathcal{H}^p_0 (0,2\pi) $  respectively. In section 8, we test some numerical examples to show the characteristics and effects of algorithm when derivatives are smooth and discontinuous respectively. In section 9, we conclude the main work of this paper.
\section{Preliminary and Basic Lemmas}
\subsection{ Moore-Penrose inverse}
Let $ X,Y $ be Hilbert space, and  $ A $ be bounded linear operator mapping from $ X $ to $ Y $. $ \mathcal{D}(A) $, $ \mathcal{N}(A) $ and $ \mathcal{R}(A) $ denote its domain, null space and range, respectively.
 \newline \indent For $ A: X \to Y $ and $ y \in \mathcal{R}(A) \oplus \mathcal{R}(A)^\perp $, the Moore-Penrose inverse $ x^\dagger:= A^\dagger y $ is defined as the element of smallest norm satisfying
\begin{equation*}
\Vert A x^\dagger -y \Vert = \inf \{  \Vert Ax - y \Vert | x \in X \}.
\end{equation*}
Thus $ A^\dagger: \mathcal{D}(A^\dagger) := \mathcal{R}(A) \oplus \mathcal{R}(A)^\perp \subseteq Y \longrightarrow  X $ defines a closed linear operator from $ Y $ to $ X $.
\newline \indent
In the following, we indicate some useful properties of Moore-Penrose inverse $ A^\dagger $:
 \begin{itemize}
 \item If $ A: X \to Y $ is one-to-one, then, for $ y \in \mathcal{R}(A) $, $ A^\dagger y $ naturally degenerates into $  A^{-1} y $.
 \item If $ \mathcal{R}(A) $ is closed, then $ \mathcal{D}(A^\dagger) = \mathcal{R}(A) \oplus \mathcal{R}(A)^\perp = Y $ and by closed graph theorem, $ A^\dagger: Y \to X $ is bounded.
\item  If $ \mathcal{R}(A) $ is closed, then $ A A^\dagger = P_{\mathcal{R}(A)}, \ A^\dagger A = P_{\mathcal{N}(A)^{\perp}}. $ If $ \mathcal{R}(A) $ is not necessarily closed, then the former identity need be adjusted into
     \begin{equation}
     A A^\dagger y = P_{\overline{\mathcal{R}(A)}} y, \ \forall y \in \mathcal{R}(A) \oplus \mathcal{R}(A)^\perp .
     \end{equation}
\end{itemize}
For more comprehensive information on Moore-Penrose inverses, see [2, Chapter 9] or [6,7].
\subsection{Sobolev spaces}
Throughout this paper, we only discuss on Sobolev space over $ \mathrm{R} $. Without specification, we denote $ H^p(0,2\pi):= H_{\mathrm{R}}^p(0,2\pi) $.  Here we introduce all kinds of notations of Sobolev spaces which will be used in the context. For more information, see [1,5] and [3, Appendix 4].
\subsubsection{Sobolev spaces of integer order}
  For some positive integer $ p $, the Sobolev space $ H^p(0,2\pi) $ is defined as
    \begin{equation}
   H^p (0,2\pi) := \{ y \in L^2 (0,2\pi): D^1 y, \cdots, D^p y \in L^2(0,2\pi)\},
   \end{equation}
  where $ D^k y $ means weak derivative, defined as $ \zeta \in L^2(0,2\pi) $ which satisfies
  \begin{equation*}
  \int^{2\pi}_0 \zeta \varphi dx = (-1)^k \int^{2\pi}_0 y \varphi^{(k)} dx, \ \varphi \in C^\infty_0(0,2\pi).
  \end{equation*}
  Equivalently, it can be characterized in absolute continuous form (refer to [3, Page 14]) as
  \begin{equation*}
    H^p(0,2\pi) = \mathcal{U}^p[0,2\pi] := \{ y \in C^{p-1} [0,2\pi]:
   \end{equation*}
   \begin{equation}
   \textrm{there exists} \ \Psi \in L^2(0,2\pi) \ \textrm{ such that} \  y^{(p-1)}(x) = \alpha + \int^x_0 \Psi(t) dt, \alpha \in \mathrm{R} \}
   \end{equation}
   Here notice that above $ " = "$ admits a possible change in a set of measure zero. In this paper, when it concerns Sobolev functions of one variable $ y \in H^p(0,2\pi) $, we, by default, modify $ y \in H^p(0,2\pi) \ (p \in \mathrm{N})$ in a set of measure zero such that it belongs to the latter fine function space $ \mathcal{U}^p[0,2\pi] $.
   \newline \indent Besides, for $ p \in \mathrm{N} $, we define
     \begin{equation*}
   {\mathcal{H}}^p_{z}(0,2\pi):= \{y \in H^p (0,2\pi): y(z) = \cdots = y^{(p-1)}(z) = 0 \}, \ z = 0 \ \textrm{or} \ 2\pi
   \end{equation*}
   and
   \begin{equation*}
   {\dot \mathcal{H}}^{2p}(0,2\pi):= \{y \in H^{2p} (0,2\pi):
   \end{equation*}
   \begin{equation*}
   y(2\pi) = \cdots = y^{(p-1)}(2\pi) = y^{(p)}(0) = \cdots = y^{(2p-1)}(0) = 0 \}.
   \end{equation*}
   \subsubsection{Fractional periodic Sobolev spaces }
 \indent For real number $ s > 0 $, periodic Sobolev spaces of fractional order $  H_{per}^s(0,2\pi) $ is defined in trigonometric form as
 \begin{equation*}
 H_{per}^s(0,2\pi) :=
 \{ \varphi \in L^2(0,2\pi):  \xi^2_0 +\sum^\infty_{k =1} (1+k^2)^s (\xi^2_k + \eta^2_k) < \infty \},
 \end{equation*}
 where
 \begin{equation*}
 \xi_0 = \frac{1}{\sqrt{2\pi}} \int^{2\pi}_0 \varphi(t)dt , \
  \xi_k = \frac{1}{\sqrt{\pi}} \int^{2\pi}_0 \varphi(t) \cos k t dt, \
  \eta_k = \frac{ 1}{\sqrt{\pi}} \int^{2\pi}_0 \varphi(t) \sin k t dt.
 \end{equation*}
 Supplementing another element $ \psi \in  H_{per}^s (0,2\pi) $, its inner product is rephrased as
\begin{equation*}
(\varphi,\psi)_{H_{per}^s} =  \xi_0 \zeta_0 +\sum^\infty_{k =1} (1+k^2)^s (\xi_k \zeta_k + \eta_k \lambda_k)
\end{equation*}
with
\begin{equation*}
 \zeta_0 = \frac{1}{\sqrt{2\pi}} \int^{2\pi}_0 \psi(t)dt , \
  \zeta_k = \frac{1}{\sqrt{\pi}} \int^{2\pi}_0 \psi(t) \cos k t dt, \
  \lambda_k = \frac{ 1}{\sqrt{\pi}} \int^{2\pi}_0 \psi(t) \sin k t dt.
  \end{equation*}
  In addition, we define
  \begin{equation*}
   H^{\infty}_{per} (0,2\pi) := \bigcap_{s>0} H^s_{per} (0,2\pi).
   \end{equation*}
\subsection{ Integro-differential operator of $ p $ order }
Define integro-differential operator of integer order $ p $ as:
\begin{equation*}
A^{(p)}: L^2(0, 2\pi) \longrightarrow L^2(0,2\pi)
\end{equation*}
\begin{equation}
 \varphi \longmapsto ( A^{(p)}\varphi)(x):= \frac{1}{\Gamma(p)}\int^x_0 (x-t)^{p-1} \varphi (t) dt, \  x \in (0,2\pi).
  \end{equation}
This is a compact linear operator with infinite-dimensional range, which satisfies
\begin{lemma}
\indent
\begin{equation*}
{\mathcal{H}}^p_0(0,2\pi) = \mathcal{R}(A^{(p)}) , \ p=1,2,3.
\end{equation*}
\end{lemma}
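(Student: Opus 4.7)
The plan is to prove the two inclusions separately, exploiting the fact that $A^{(p)}$ is essentially the $p$-fold Volterra integration operator $I^p$, which is inverse to differentiation on functions vanishing to order $p$ at the origin. The key technical tool is the absolute-continuity characterization $H^p(0,2\pi)=\mathcal{U}^p[0,2\pi]$ from (2.3), which allows classical Taylor expansion with integral remainder.

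For the inclusion $\mathcal{R}(A^{(p)})\subseteq \mathcal{H}^p_0(0,2\pi)$, I would take $\varphi\in L^2(0,2\pi)$ and set $y=A^{(p)}\varphi$. By differentiating under the integral sign (or by induction on $p$ using Fubini to rewrite $A^{(p)}$ as the $p$-fold iterated integral from $0$), one obtains $y^{(k)}(x)=\frac{1}{\Gamma(p-k)}\int_0^x(x-t)^{p-k-1}\varphi(t)\,dt$ for $k=0,\dots,p-1$, each of which is continuous on $[0,2\pi]$ and vanishes at $x=0$. The $p$-th weak derivative is $\varphi\in L^2$, so $y\in H^p(0,2\pi)$ with the required zero initial data.

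For the reverse inclusion $\mathcal{H}^p_0(0,2\pi)\subseteq\mathcal{R}(A^{(p)})$, I would take $y\in \mathcal{H}^p_0(0,2\pi)$. Using the $\mathcal{U}^p[0,2\pi]$-representation (2.3), $y\in C^{p-1}[0,2\pi]$ and $y^{(p-1)}$ is absolutely continuous with $y^{(p)}=\Psi\in L^2(0,2\pi)$. Since $y^{(p-1)}(0)=0$, the constant $\alpha$ in (2.3) equals zero, giving $y^{(p-1)}(x)=\int_0^x y^{(p)}(t)\,dt$. Integrating successively $p-1$ more times (each step valid because the integrands are continuous and the boundary values $y^{(k)}(0)$ vanish for $k=0,\dots,p-2$), and then applying Fubini to collapse the iterated integral into a single one, yields the Taylor-with-integral-remainder identity
\begin{equation*}
y(x)=\sum_{k=0}^{p-1}\frac{y^{(k)}(0)}{k!}x^k+\frac{1}{(p-1)!}\int_0^x(x-t)^{p-1}y^{(p)}(t)\,dt.
\end{equation*}
The initial-value conditions kill the polynomial part, leaving $y=A^{(p)}y^{(p)}$ with $y^{(p)}\in L^2(0,2\pi)$, so $y\in\mathcal{R}(A^{(p)})$.

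The only genuinely delicate point is justifying the iterated integration/Fubini step for the merely $H^p$-regular function $y$ rather than a classical $C^p$ one; this is routine once one uses the absolute-continuity form (2.3) of $H^p$, and is the reason the hypothesis $y\in H^p$ (rather than merely $y\in C^{p-1}$) is needed. Everything else is just bookkeeping of the boundary terms, which is why the argument works uniformly for $p=1,2,3$ (indeed for any $p\in\mathbb{N}$).
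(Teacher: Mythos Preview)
Your proposal is correct and follows essentially the same route as the paper: both directions rest on the absolute-continuity characterization $H^p=\mathcal{U}^p[0,2\pi]$ from (2.3), with the Taylor-with-integral-remainder identity (equivalently, $p-1$ integrations by parts) giving $\mathcal{H}^p_0\subseteq\mathcal{R}(A^{(p)})$, and direct computation of the successive (weak) derivatives of $A^{(p)}\varphi$ giving the reverse inclusion. The only cosmetic difference is that the paper phrases the first inclusion as ``integration by parts'' and, for the second, spells out more carefully the passage from a.e.\ equality to pointwise equality of the continuous representatives before reading off the zero initial data; your formulation via iterated integration and Fubini is equivalent.
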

\begin{proof}
"$\subseteq$": Assume that $ y \in  {\mathcal{H}}^p_0(0,2\pi) \ (p=1,2,3) $. There exists a  $ y^\star \in \mathcal{U}^p[0,2\pi] $ as a modification of $ y $ in a possible $ 0 $ measure set with $ 0 $ initial value data, that is, $ y^\star(0) = \cdots = {y^\star}^{(p-1)}(0) = 0$.  With integration formula by parts, it is not difficult to verify that,
 \begin{equation*}
 \frac{1}{(p-1)!}  \int^x_0  ( x - t )^{p-1} {y^\star}^{(p)} (t) dt = y^\star(x) = y(x), \ a.e. .
 \end{equation*}
Thus,
 \begin{equation*}
{\mathcal{H}}^p_0(0,2\pi)\subseteq \mathcal{R}(A^{(p)}), \ p =1, 2, 3 .
 \end{equation*}
 \indent "$\supseteq$": For simplicity, we only provide proof of case $ p = 2 $.
 Assuming $ y \in \mathcal{R}(A^{(2)}) $, then there exists a $ \varphi \in L^2(0,2\pi) $ such that
\begin{equation*}
  \int^x_0  ( x - t ) \varphi (t) dt = y(x), \ a.e. .
 \end{equation*}
It is not difficult to verify that
 \begin{equation*}
 D^1 y =  \int^x_0 \varphi(t) dt, \ D^2 y = \varphi(t) \  a.e.
 \end{equation*}
 With definition of (2.2), it yields that $ y \in H^2 (0,2\pi) $. Then by absolute continuous characterization (2.3) of Sobolev function of one variable, there exist a $ y^\star \in \mathcal{U}^2 [0,2\pi] $ as modification of $ y $ in a $ 0 $ measure set. Thus we have
 \begin{equation*}
 y^\star =   \int^x_0  ( x - t ) \varphi (t) dt, \  {y^\star}' = D^1 y^\star = D^1 y = \int^x_0 \varphi(t) dt, \ a.e.
  \end{equation*}
 Notice that
 \begin{equation*}
  y^\star, \  {y^\star}', \  \int^x_0  ( x - t ) \varphi (t) dt, \ \int^x_0 \varphi(t) dt
  \end{equation*}
  are all continuous functions, thus
 \begin{equation*}
 y^\star =  \int^x_0  ( x - t ) \varphi (t) dt, \  {y^\star}' = \int^x_0 \varphi(t) dt. \ (\textrm{strictly})
 \end{equation*}
\indent  "$ \supseteq $" holds for case $ p =2 $.
\end{proof}
\indent With above equality, we describe the density of range in $ L^2(0, 2\pi) $.
\begin{lemma}
$  $\begin{equation*} \overline{\mathcal{R}(A^{(p)})} = L^2(0,2\pi), \
P_{\overline{\mathcal{R}(A^{(p)})}} =I \  \  ( p=1,2,3 ),
\end{equation*}
where $ I $ is the identity operator on $ L^2 (0 , 2\pi) $.
\end{lemma}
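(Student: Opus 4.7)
The plan is to reduce everything to Lemma 2.1. That lemma identifies the range exactly as $\mathcal{H}^p_0(0,2\pi)$ for $p=1,2,3$, so the task becomes: show that $\mathcal{H}^p_0(0,2\pi)$ is dense in $L^2(0,2\pi)$. Once density is established, the second assertion $P_{\overline{\mathcal{R}(A^{(p)})}} = I$ is automatic, because the orthogonal projection onto the whole ambient Hilbert space is the identity operator.

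For density, the cleanest route is via test functions. First I would note that any $\varphi \in C^\infty_0(0,2\pi)$ has compact support strictly inside the open interval $(0,2\pi)$, so $\varphi$ vanishes identically on some neighbourhood of $0$. In particular $\varphi(0) = \varphi'(0) = \cdots = \varphi^{(p-1)}(0) = 0$, and of course $\varphi \in C^\infty[0,2\pi] \subseteq H^p(0,2\pi)$. Hence $\varphi \in \mathcal{H}^p_0(0,2\pi)$, giving the inclusion
\begin{equation*}
C^\infty_0(0,2\pi) \subseteq \mathcal{H}^p_0(0,2\pi) = \mathcal{R}(A^{(p)}) \subseteq L^2(0,2\pi).
\end{equation*}

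Combining this with the classical fact that $C^\infty_0(0,2\pi)$ is dense in $L^2(0,2\pi)$ and taking closures yields the sandwich
\begin{equation*}
L^2(0,2\pi) = \overline{C^\infty_0(0,2\pi)} \subseteq \overline{\mathcal{R}(A^{(p)})} \subseteq L^2(0,2\pi),
\end{equation*}
forcing equality throughout, and hence $P_{\overline{\mathcal{R}(A^{(p)})}} = I$.

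No substantive obstacle is expected: Lemma 2.1 has already absorbed the only technical content (the characterization of the range as a Sobolev subspace), and the remainder is a standard density argument. The only point that deserves a brief verification is that a function in $C^\infty_0(0,2\pi)$ really lies in the absolute-continuous model $\mathcal{U}^p[0,2\pi]$ used to define $\mathcal{H}^p_0$ and satisfies the zero-initial-data conditions, which follows immediately from the defining property of compact support in the open interval.
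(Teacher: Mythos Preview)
Your proposal is correct and follows essentially the same argument as the paper: invoke Lemma 2.1 to identify $\mathcal{R}(A^{(p)}) = \mathcal{H}^p_0(0,2\pi)$, observe $C^\infty_0(0,2\pi) \subseteq \mathcal{H}^p_0(0,2\pi)$, and use the density of $C^\infty_0(0,2\pi)$ in $L^2(0,2\pi)$ together with the closure sandwich. Your write-up is slightly more explicit about why compactly supported smooth functions satisfy the zero-initial-data conditions, but the logical structure is identical to the paper's proof.
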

\begin{proof}
With Lemma 2.1, $ {\mathcal{H}}^p_0(0,2\pi) = \mathcal{R}(A^{(p)}). $
Recall the fact that $ C^\infty_0(0,2\pi) $ is dense in $ L^2(0,2\pi) $
and notice that $ C^\infty_0 (0,2\pi) \subseteq {\mathcal{H}}^p_0(0,2\pi) $, then $ L^2(0,2\pi) = \overline{C^\infty_0 (0,2\pi) } \subseteq \overline{{\mathcal{H}}^p_0(0,2\pi)} \subseteq L^2(0,2\pi). $
The result follows.
\end{proof}
This implies that $ \mathcal{R}(A^{(p)})^\perp = {0} $, and
 \begin{equation*}
 \mathcal{D}({A^{(p)}}^\dagger)=  \mathcal{R}(A^{(p)}) \oplus \mathcal{R}(A^{(p)})^\perp =  \mathcal{R}(A^{(p)}) ={\mathcal{H}}^p_0(0,2\pi).
\end{equation*}
Now differentiating the both sides of equation (1.1) for $ y \in \mathcal{H}^p_0(0,2\pi) $ in $ p $ order yields that
$ {A^{(p)}}^\dagger y =  {A^{(p)}}^{-1} y = y^{(p)} $. This gives
\begin{lemma}
$ {A^{(p)}}^\dagger y = y^{(p)}, \ \forall y \in {\mathcal{H}}^p_0(0,2\pi).$
\end{lemma}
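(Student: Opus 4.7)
The plan is to reduce the computation of ${A^{(p)}}^\dagger y$ on $\mathcal{H}^p_0(0,2\pi)$ to computing an honest inverse, and then invert the Volterra-type integral operator $A^{(p)}$ by differentiating $p$ times.

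First I would verify that $A^{(p)}$ is injective on $L^2(0,2\pi)$. Suppose $A^{(p)}\varphi = 0$, so that $\frac{1}{(p-1)!}\int_0^x (x-t)^{p-1}\varphi(t)\,dt = 0$ almost everywhere. By the absolute continuity characterization $(2.3)$ of Sobolev functions, the left-hand side belongs to $\mathcal{U}^p[0,2\pi]$ with vanishing initial data, and differentiating under the integral $p$ times (just as was done in the proof of the ``$\supseteq$'' inclusion for Lemma 2.1) yields $\varphi = 0$ almost everywhere. Hence $\mathcal{N}(A^{(p)}) = \{0\}$.

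Next I invoke the first bullet in Section 2.1: since $A^{(p)}$ is one-to-one, on its range $\mathcal{R}(A^{(p)})$ the Moore--Penrose inverse reduces to the ordinary inverse, so ${A^{(p)}}^\dagger y = {A^{(p)}}^{-1} y$ for every $y \in \mathcal{R}(A^{(p)})$. By Lemma 2.1 this range equals $\mathcal{H}^p_0(0,2\pi)$, which is precisely the hypothesis on $y$, so ${A^{(p)}}^\dagger y$ is well defined and characterized as the unique $\varphi \in L^2(0,2\pi)$ with $A^{(p)}\varphi = y$.

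Finally, I identify this unique $\varphi$ as $y^{(p)}$. Given $y \in \mathcal{H}^p_0(0,2\pi)$, take its modification $y^\star \in \mathcal{U}^p[0,2\pi]$ with $y^\star(0) = \cdots = {y^\star}^{(p-1)}(0) = 0$. Repeated integration by parts (the same computation already used in the ``$\subseteq$'' direction of Lemma 2.1) gives
\begin{equation*}
\frac{1}{(p-1)!}\int_0^x (x-t)^{p-1}{y^\star}^{(p)}(t)\,dt = y^\star(x) = y(x)\ \textrm{a.e.},
\end{equation*}
i.e.\ $A^{(p)}(y^{(p)}) = y$. By injectivity this forces $y^{(p)} = {A^{(p)}}^{-1}y = {A^{(p)}}^\dagger y$, which is the desired identity.

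The only potentially delicate step is the injectivity argument, since it rests on being able to differentiate an $L^2$-Volterra integral $p$ times in the weak sense; however, this is exactly the mechanism already exploited in Lemma 2.1, so no new machinery is needed. Everything else is a direct citation of the properties of $A^\dagger$ recorded in Section 2.1.
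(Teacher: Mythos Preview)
Your proof is correct and follows essentially the same route as the paper: the paper (in the paragraph immediately preceding Lemma 2.3) notes that $\mathcal{D}({A^{(p)}}^\dagger)=\mathcal{R}(A^{(p)})=\mathcal{H}^p_0(0,2\pi)$, invokes injectivity so that ${A^{(p)}}^\dagger y={A^{(p)}}^{-1}y$, and then differentiates (1.1) $p$ times to identify the solution as $y^{(p)}$. Your argument is the same in spirit, only you run the last step in the reverse direction (showing $A^{(p)}(y^{(p)})=y$ via the integration-by-parts computation from Lemma 2.1 and then appealing to injectivity), which is a harmless reordering.
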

 \subsection{ Galerkin Projection scheme with Moore-Penrose inverses }
\indent Let $ X $ be Hilbert space. For the linear operator equation
 \begin{equation*}
 A\varphi=y,
 \end{equation*}
 where $  A : X \longrightarrow X $ is bounded and linear. To approximate
  \begin{equation*}
   \varphi^\dagger := A^\dagger y \in X,
   \end{equation*}
 \indent We introduce a sequence of finite-dimensional subspaces $ \{ X_n \}$, which satisfies
 \begin{equation*}
  X_n \subseteq X_{n+1} , \  \overline{\bigcup^\infty_{n=1} X_n} =X.
 \end{equation*}
 Then construct a sequence of orthogonal projections $ \{ P_n \} $,
 where $ P_n $ projects $ X $ onto $ X_n $, and gives Galerkin approximation setting
\begin{equation}
 A_n \varphi_n = y_n, \ \ y_n := P_n y  \in X_n,
\end{equation}
where $ A_n := P_n A P_n : X_n \longrightarrow X_n $.
 Hence solving (2.5) in sense of Moore-Penrose inverse gives Galerkin projection scheme
\begin{equation}
 \varphi^\dagger_n := A^\dagger_n y_n \in X_n,
  \end{equation}
  where $ A^\dagger_n: \mathcal{R}(A_n) + {\mathcal{R}(A_n)}^{\perp_n} = X_n \longrightarrow  X_n $.
  Notice that $ \perp_n $ means orthogonal complement in finite dimensional Hilbert space $ X_n $.
\newline \indent Now $ \{ \varphi^\dagger_n\} $ is a natural approximate scheme for $ \varphi^\dagger $.
To study its convergence property, we introduce the Groetsch regularizer for setting (2.5) as
\begin{equation*}
R_n := A_n^\dagger P_n A : X \longrightarrow X_n \subseteq X,
\end{equation*}
define the Groetsch regularity as $ \ \sup\limits_n \Vert R_n \Vert < +\infty $, and introduce the following result:
\begin{lemma}
For above Galerkin approximate setting (2.5), if Groetsch regularity holds, then
\newline (a) For $ y \in \mathcal{D} ( A^\dagger ) = \mathcal{R}(A) + \mathcal{R}(A)^\perp $.
\begin{equation}
  \Vert A^\dagger_n P_n P_{\overline{\mathcal{R}(A)}} y - A^\dagger y \Vert \leq \Vert P_{\mathcal{N} ( A_n )} A^\dagger y \Vert + \Vert R_n -I_X \Vert \Vert (I-P_n) A^\dagger y \Vert,
\end{equation}
(b) For  $ y \notin \mathcal{D}(A^\dagger) $,
 \begin{equation*}
  \lim_{n \to \infty} \Vert A^\dagger_n P_n P_{\overline{\mathcal{R}(A)}} y \Vert = \infty.
  \end{equation*}
\end{lemma}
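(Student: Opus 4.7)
The overall strategy is to use identity (2.1), namely $A A^\dagger y = P_{\overline{\mathcal{R}(A)}} y$ for every $y \in \mathcal{D}(A^\dagger)$, to rewrite the quantity under study in terms of the Groetsch regularizer $R_n = A_n^\dagger P_n A$, and then to exploit the elementary finite-dimensional fact that on $X_n$ the operator $A_n^\dagger A_n$ coincides with $I_{X_n} - P_{\mathcal{N}(A_n)}$, together with the observation that $\mathcal{N}(A_n) \subseteq X_n$ implies $P_{\mathcal{N}(A_n)} P_n = P_{\mathcal{N}(A_n)}$.

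For part (a), set $\varphi^\dagger := A^\dagger y$. The plan is to first rewrite, via (2.1) and the definition of $R_n$,
\begin{equation*}
A_n^\dagger P_n P_{\overline{\mathcal{R}(A)}} y \;=\; A_n^\dagger P_n A \varphi^\dagger \;=\; R_n \varphi^\dagger ,
\end{equation*}
so that the quantity to estimate becomes $(R_n - I_X) \varphi^\dagger$. I would then split $\varphi^\dagger = P_n \varphi^\dagger + (I - P_n)\varphi^\dagger$ and treat the two pieces separately. On the in-subspace piece, $R_n P_n \varphi^\dagger = A_n^\dagger A_n (P_n \varphi^\dagger) = (I_{X_n} - P_{\mathcal{N}(A_n)}) P_n \varphi^\dagger$, so $(R_n - I_X) P_n \varphi^\dagger = -P_{\mathcal{N}(A_n)} \varphi^\dagger$ after using $P_{\mathcal{N}(A_n)} P_n = P_{\mathcal{N}(A_n)}$. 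On the orthogonal piece, I would apply the trivial operator-norm bound $\Vert (R_n - I_X)(I - P_n)\varphi^\dagger\Vert \leq \Vert R_n - I_X\Vert \Vert (I - P_n)\varphi^\dagger\Vert$. The triangle inequality then delivers the stated estimate.

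For part (b), I plan to argue by contradiction: assume $\{\psi_n\} := \{A_n^\dagger P_n P_{\overline{\mathcal{R}(A)}} y\}$ does not blow up. Then some subsequence $\psi_{n_k}$ is bounded, and weak compactness of bounded sets in the Hilbert space $X$ yields a further subsequence with $\psi_{n_k} \rightharpoonup \psi^*$. The strategy is to pass to the limit in the Galerkin identity $P_{n_k} A \psi_{n_k} = A_{n_k} \psi_{n_k} = P_{\mathcal{R}(A_{n_k})} P_{n_k} P_{\overline{\mathcal{R}(A)}} y$ (using $\psi_{n_k} \in X_{n_k}$) and identify $A \psi^* = P_{\overline{\mathcal{R}(A)}} y \in \mathcal{R}(A)$, forcing $y \in \mathcal{R}(A) \oplus \mathcal{R}(A)^\perp = \mathcal{D}(A^\dagger)$, which contradicts the hypothesis. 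The main obstacle is the residual $(I - P_{\mathcal{R}(A_{n_k})}) P_{n_k} P_{\overline{\mathcal{R}(A)}} y$: the finite-dimensional subspace $\mathcal{R}(A_{n_k})$ sits inside $X_{n_k}$ in a manner that varies with $n_k$, so showing this residual disappears in the weak limit is the delicate step, and this is precisely where the Groetsch regularity $\sup_n \Vert R_n \Vert < \infty$ must be invoked — it provides the uniform control on the Galerkin defect needed to transfer $P_{\overline{\mathcal{R}(A)}} y$ into the weak-limit equation and close the contradiction.
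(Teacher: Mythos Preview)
The paper does not actually prove this lemma; it merely cites reference [18, Theorem~2.2]. So there is no in-paper argument to compare against, and your proposal must be judged on its own merits.

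Your treatment of part~(a) is correct and clean: rewriting $A_n^\dagger P_n P_{\overline{\mathcal{R}(A)}} y$ as $R_n\varphi^\dagger$ via (2.1), splitting $\varphi^\dagger = P_n\varphi^\dagger + (I-P_n)\varphi^\dagger$, and using $A_n^\dagger A_n = I_{X_n} - P_{\mathcal{N}(A_n)}$ on the first piece gives exactly the stated bound. Note that Groetsch regularity is not needed for the inequality itself---it is only used downstream (as in Corollary~5.1) to make $\Vert R_n - I_X\Vert$ uniformly bounded so that the right-hand side tends to zero.

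For part~(b) your weak-compactness strategy is sound, but you misidentify where the difficulty lies. The residual $(I-P_{\mathcal{R}(A_{n_k})})P_{n_k}z$ with $z = P_{\overline{\mathcal{R}(A)}}y$ can be dispatched by a direct density argument \emph{without} Groetsch regularity: for $w\in X$ write $P_{n_k}Aw = A_{n_k}(P_{n_k}w) + P_{n_k}A(I-P_{n_k})w$, note the first summand lies in $\mathcal{R}(A_{n_k})$, and the second has norm at most $\Vert A\Vert\,\Vert(I-P_{n_k})w\Vert\to 0$; then approximate $z\in\overline{\mathcal{R}(A)}$ by elements $Aw_m$ to conclude $\Vert(I-P_{\mathcal{R}(A_{n_k})})P_{n_k}z\Vert\to 0$. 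Combined with $P_{n_k}A\psi_{n_k}\rightharpoonup A\psi^\ast$ (which follows from $\psi_{n_k}\rightharpoonup\psi^\ast$, boundedness of $A$, and $P_{n_k}\to I$ strongly), you obtain $A\psi^\ast = z$, hence $y\in\mathcal{D}(A^\dagger)$, the desired contradiction. So your plan works, but the phrase ``this is precisely where the Groetsch regularity must be invoked'' is inaccurate: that hypothesis is inert in part~(b) as you have set it up.
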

\begin{proof}
see[18, theorem 2.2]
\end{proof}
\subsection{Higher order estimate under trigonometric basis}
 \indent To further estimate the right term $ \Vert (I-P_n) A^\dagger y \Vert $ under trigonometric basis in $ L^2 $, similar to the result [3, Lemma A.43],
we introduce another error estimate:
\begin{lemma}
Let \begin{math} P_n : L^2 ( 0, 2\pi ) \longrightarrow X_n \subset L^2(0, 2\pi ) \end{math} be an orthogonal projection operator, where
\begin{equation*}
X_n :=
\{
\xi_0 \cdot \frac{1}{\sqrt{2\pi}} + \sum\limits^{n}_{ k = 1 }\xi_k  \frac{\cos  k t}{\sqrt{\pi}}
+ \sum\limits^n_{ k=1 } \eta_k  \frac{\sin  k t}{\sqrt{\pi}} : \ \xi_0, \xi_k , \eta_k \in \mathrm{R}
\}.
\end{equation*}
 Then \begin{math} P_n \end{math} is given as follows£º
\begin{equation*}
(P_n x)(t)
= \xi_0 \frac{1}{\sqrt{2\pi}} + \sum\limits^{n}_{ k = 1 }\xi_k \frac{\cos k t}{\sqrt{\pi}}
+ \sum^n_{ k=1 } \eta_k \frac{\sin k t}{\sqrt{\pi}} ,
\end{equation*}
where
\begin{equation*}
 \xi_0 = \frac{1}{\sqrt{2\pi}} \int^{2\pi}_0 x(t) dt ,\
  \xi_k = \int^{2\pi}_0 x(t) \frac{ \cos k t}{\sqrt{\pi}} dt,
  \end{equation*}
  \begin{equation*}
  \eta_k = \int^{2\pi}_0 x(t) \frac{ \sin k t}{\sqrt{\pi}} dt, \ \ 1 \leq k \leq n
  \end{equation*}
 are the Fourier coefficients of \begin{math} x \end{math}. Furthermore, the following estimate holds:
 \begin{equation*}
 \Vert x- P_n x \Vert_{L^2} \leq \frac{1}{ n^r } \Vert x \Vert_{H_{per}^r} \ \ for \ all \  x \in \  H_{per}^r(0,2\pi),
 \end{equation*}
 where \begin{math} r \geq 0 \end{math}.
\end{lemma}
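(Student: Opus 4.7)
The first assertion, that $P_n x$ is given by the Fourier-coefficient formula, is standard Hilbert-space projection. First I would verify by direct integration that the system
\[
e_0 := \tfrac{1}{\sqrt{2\pi}}, \qquad c_k := \tfrac{\cos kt}{\sqrt{\pi}}, \qquad s_k := \tfrac{\sin kt}{\sqrt{\pi}}, \quad k \geq 1
\]
is orthonormal in $L^2(0,2\pi)$, and in particular that $\{e_0, c_1, s_1, \dots, c_n, s_n\}$ is an orthonormal basis of $X_n$. Then the orthogonal projection onto $X_n$ is simply
\[
P_n x = \langle x, e_0\rangle e_0 + \sum_{k=1}^n \langle x, c_k\rangle c_k + \sum_{k=1}^n \langle x, s_k\rangle s_k,
\]
which matches the claimed formula with $\xi_0 = \langle x, e_0\rangle$, $\xi_k = \langle x, c_k\rangle$, $\eta_k = \langle x, s_k\rangle$.

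For the error estimate, the plan is to exploit that the full trigonometric system $\{e_0\}\cup\{c_k, s_k\}_{k\geq 1}$ is an orthonormal basis of $L^2(0,2\pi)$ (a classical fact; this is where the density of trigonometric polynomials in $L^2$ is used, and one can alternatively just note it is listed in standard references cited in the paper). Therefore every $x \in L^2(0,2\pi)$ expands as $x = \xi_0 e_0 + \sum_{k=1}^\infty (\xi_k c_k + \eta_k s_k)$, and Parseval gives
\[
\Vert x - P_n x\Vert_{L^2}^2 \;=\; \sum_{k=n+1}^{\infty} \bigl(\xi_k^2 + \eta_k^2\bigr).
\]

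To bring in the $H^r_{per}$-norm I would insert the weight $(1+k^2)^r$ and extract a uniform bound on its reciprocal. For $k \geq n+1$ and $r \geq 0$ one has $(1+k^2)^r \geq k^{2r} \geq (n+1)^{2r} \geq n^{2r}$, so
\[
\sum_{k=n+1}^{\infty}(\xi_k^2 + \eta_k^2) \;=\; \sum_{k=n+1}^{\infty} \frac{(1+k^2)^r(\xi_k^2+\eta_k^2)}{(1+k^2)^r} \;\leq\; \frac{1}{n^{2r}}\sum_{k=n+1}^{\infty}(1+k^2)^r(\xi_k^2+\eta_k^2).
\]
The remaining sum is dominated by $\Vert x\Vert_{H^r_{per}}^2$ by definition of the periodic Sobolev norm given in Section 2.2. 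Taking square roots yields the stated bound $\Vert x - P_n x\Vert_{L^2} \leq n^{-r}\Vert x\Vert_{H^r_{per}}$.

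There is no serious obstacle in this argument; the entire proof is bookkeeping on Fourier coefficients. The one step worth care is the weight inequality $(1+k^2)^r \geq n^{2r}$ for $k \geq n+1$, since it relies on $r \geq 0$ and the elementary bound $1+k^2 \geq k^2$; if one wanted a sharper constant, $(1+k^2)^r \geq (1+(n+1)^2)^r$ could be used instead, but the stated $n^{-r}$ rate is already what the subsequent sections require.
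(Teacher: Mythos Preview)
Your argument is correct and is exactly the standard one. The paper does not give its own proof of this lemma; it simply states the result as a variant of \cite[Lemma A.43]{3} (Kirsch), so there is nothing further to compare.
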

\section{General Framework }
We start from
\begin{problem}
 Assume that we have $ y \in {\mathcal{H}}^p_0(0,2\pi) $ and $ y^\delta $ measured on $(0, 2\pi) $,
belonging to $ L^2(0,2\pi) $ such that $ \Vert y^\delta - y \Vert_{L^2} \leq \delta $.
How to get a stable approximation to $ y^{(p)} $  ?
 \end{problem}
 In Lemma 2.3, we have known that $ y^{(p)} $ is the solution of linear operator equation
\begin{equation}
 A^{(p)} \varphi := \frac{1}{\Gamma(p)}\int^x_0 (x-t)^{p-1} \varphi (t) dt = y(x), \  x \in (0,2\pi),
\end{equation}
when
\begin{equation*}
y \in {\mathcal{H}}^p_0(0,2\pi) := \{ y \in H^p(0,2\pi) : y(0)= \cdots = y^{(p-1)}(0) =0\}.
\end{equation*}
\subsection{Formulation of finite-dimensional approximation system}
In the following, we consider to approximate $ y^{(p)} $ by the Galerkin method. Set
\begin{equation*}
A^{(p)} : X=L^2(0, 2\pi) \longrightarrow  L^2 (0, 2\pi).
\end{equation*}
\indent Choose a sequence of orthogonal projection operators $ \{ P_n \} $, where
$ P_n $ projects $ L^2(0, 2\pi ) $ onto
\begin{equation*}
X_n := span \{ \frac{1}{\sqrt{2\pi}}, \frac{ \cos t}{\sqrt{\pi}}, \frac{ \sin t}{\sqrt{\pi}},
\cdots, \frac{ \cos n t}{\sqrt{\pi}}, \frac{ \sin n t}{\sqrt{\pi}} \}.
 \end{equation*}
 Then degenerate the original operator equation with noisy data
\begin{equation*}
A^{(p)} \varphi =y^\delta
\end{equation*}
into a finite-rank system
\begin{equation}
 A_n^{(p)} \varphi_n =y^\delta_n,
\end{equation}
where
 \begin{equation}
 A_n^{(p)}:=P_n A^{(p)} P_n : X_n \longrightarrow X_n  , \ \
    y^\delta   _n := P_n y^\delta.
 \end{equation}
Span $ A_n^{(p)} $ under above basis, then the finite-rank system (3.2) is transformed into the linear system as
\begin{equation}
M_n^{(p)} u_n = b^\delta_n, \ u_n, b^\delta_n \in \mathrm{R}^{2n+1} .
\end{equation}
Notice that $ M_n^{(p)} $ and $ b^\delta_n $ are defined as follows:
\begin{equation*}
M_n^{(p)} := (m_{ij}^{(p)})_{(2n+1) \times (2n+1)}
\end{equation*}
where
  \begin{equation*}
m_{ij}^{(p)} := (A^{(p)}_n(\xi_j), \xi_i)_{L^2}, \ i,j \in \overline{0,1,2,\cdots,2n-1,2n}
\end{equation*}
\begin{equation*}
\xi_0 := \frac{1}{\sqrt{2\pi}}, \xi_{2k-1} := \frac{\cos k x}{\sqrt{\pi}}, \xi_{2k}:= \frac{\sin k x}{\sqrt{\pi}}, \ k \in \overline{1,2,\cdots,n}.
\end{equation*}
Indeed,
\begin{equation*}
A_n^{(p)} (\frac{1}{\sqrt{2\pi}}, \frac{\cos t }{\sqrt{\pi}}, \frac{\sin t }{\sqrt{\pi}}, \cdots , \frac{\cos n t }{\sqrt{\pi}}, \frac{\sin n t }{\sqrt{\pi}} )
\end{equation*}
\begin{equation}
= (\frac{1}{\sqrt{2\pi}}, \frac{\cos t }{\sqrt{\pi}}, \frac{\sin t }{\sqrt{\pi}}, \cdots , \frac{\cos n t }{\sqrt{\pi}}, \frac{\sin n t }{\sqrt{\pi}} ) M_n^{(p)}.
\end{equation}
And $ b^\delta_n:= (f_0,f_1,g_1,\cdots, f_n, g_n)^T $ is defined as
\begin{equation*}
f_0 := \int^{2\pi}_0 y^\delta (t) \frac{1}{\sqrt{2\pi}} dt
\end{equation*}
\begin{equation*}
 f_k := \int^{2\pi}_0 y^\delta (t) \frac{\cos kt }{\sqrt{\pi}} dt, \
g_k := \int^{2\pi}_0 y^\delta (t) \frac{\sin kt }{\sqrt{\pi}} dt, k \in \overline{1,2,\cdots,n}.
\end{equation*}
Indeed,
\begin{equation*}
y^\delta_n= (\frac{1}{\sqrt{2\pi}}, \frac{\cos t }{\sqrt{\pi}}, \frac{\sin t }{\sqrt{\pi}}, \cdots , \frac{\cos n t }{\sqrt{\pi}}, \frac{\sin n t   }{\sqrt{\pi}} ) b_n^\delta.
\end{equation*}
Once we figure out \begin{math}{ u_n^{p, \delta}} = {M_n^{(p)}}^\dagger b^\delta_n \end{math},
then we obtain solution for (3.2),
\begin{equation*}
{ \varphi_n^{p, \delta}} = (\frac{1}{\sqrt{2\pi}}, \frac{ \cos t}{\sqrt{\pi}}, \frac{ \sin t}{\sqrt{\pi}}, \cdots, \frac{ \cos n t}{\sqrt{\pi}}, \frac{ \sin n t}{\sqrt{\pi}}) {u^{p,\delta}_n} .
\end{equation*}
in sense of Moore-Penrose inverse, \begin{math} {\varphi_n^{p, \delta}} = {A_n^{(p)}}^\dagger y^\delta_n \end{math}.
This is the regularized scheme. In the following, we need to determine a regularization parameter $ n^{(p)} = n^{(p)}(\delta) $ such that
 \begin{equation*}
  {\varphi_{n^{(p)}(\delta)}^{p, \delta}} := {A_{n^{(p)}(\delta)}^{(p)}}^\dagger y^\delta_{n^{(p)}(\delta)} \stackrel{s}{\longrightarrow}
  y^{(p)}, \ \delta \to 0^+.
  \end{equation*}
\subsection{Total error estimate and parameter choice for regularization}
\indent Now, in order to control the accuracy of computation, we adjust parameter choice
strategy $ n^{(p)} = n^{(p)}(\delta) $ according to following total error estimate
\begin{equation}
\Vert \varphi^{p, \delta}_n - y^{(p)} \Vert_{L^2} := \Vert {A^{(p)}_n}^\dagger P_n y^\delta -  y^{(p)} \Vert_{L^2}.
\end{equation}
Since Lemma 2.3 illustrates that
\begin{equation}
  y^{(p)} = { A^{(p)} }^\dagger y,  \  y \in {\mathcal{H}}^p_0(0,2\pi),
\end{equation}
inserting (3.7) into (3.6), the formula (3.6) becomes
\begin{equation*}
\Vert {A^{(p)}_n}^\dagger P_n  y^\delta - {A^{(p)}}^\dagger y \Vert_{L^2}, \ y \in {\mathcal{H}}^p_0(0,2\pi).
\end{equation*}
Throughout this paper we use the following definitions
\begin{itemize}
\item Total error
\begin{equation*}
e^{(p)}_T:=
\Vert {A^{(p)}_n}^\dagger P_n  y^\delta - {A^{(p)}}^\dagger y \Vert_{L^2},
\end{equation*}
\end{itemize}
which is broken into two parts (c.f.[10, Chapter 1.1]):
 \begin{itemize}
 \item  Noise error:
\begin{equation*}
e^{(p)}_N:=
\Vert {A^{(p)}_n}^\dagger P_n  y^\delta - {A^{(p)}_n}^\dagger P_n  y  \Vert_{L^2}
\end{equation*}
\item Approximation error:
\begin{equation*}
e^{(p)}_A:=
\Vert {A^{(p)}_n}^\dagger P_n  y - {A^{(p)}}^\dagger y \Vert_{L^2}
\end{equation*}
\end{itemize}
It is an easy observation that $ e^{(p)}_T \leq e^{(p)}_N + e^{(p)}_A $. Upon this fact, we figure
out the total error estimate by estimating $e^{(p)}_N $ and $ e^{(p)}_A $ respectively.
\section{Well-posedness and numerical scheme of Galerkin System}
With concrete expressions of $ M^{(p)}_n $ in Appendix A, it is not difficult to obtain:
 \begin{theorem}
 Finite dimensional system (3.4) is well-posed, that is, there exists a unique solution to (3.4),
denoted as
\begin{equation*}
u^{p,\delta}_n = {M^{(p)}_n}^{-1} b^\delta_n,
\end{equation*}
where
\begin{equation*}
b^\delta_n =(f_0, f_1, g_1, \cdots, f_n, g_n)^T,
u^{p,\delta}_n =(\xi^{(p)}_0, \xi^{(p)}_1, \eta^{(p)}_1, \cdots, \xi^{(p)}_n, \eta^{(p)}_n )^T.
\end{equation*}
Moreover, analytic formulas for the solution of  Galerkin approximation system (3.2) are determined as follows:
\begin{equation*}
{A^{(p)}_n}^\dagger P_n y = \xi^{(p)}_0 \frac{1}{\sqrt{2\pi}} + \sum^{n}_{k=1} \xi^{(p)}_k  \frac{\cos k t}{ \sqrt{\pi} } + \sum^n_{k=1}\eta^{(p)}_k \frac{ \sin k t}{\sqrt{\pi}}.
\end{equation*}
Corresponding three cases are listed as follows.
\newline \textbf{Case} $ p=1 $:
\begin{equation}
\xi^{(1)}_0 = \frac{1}{\pi}( f_0 + \sqrt{2} \sum_{k=1}^n f_k  ),
\end{equation}
\begin{equation}
\xi^{(1)}_k =  \sqrt{2} \xi^{(1)}_0 +k g_k,
\end{equation}
\begin{equation}
\eta^{(1)}_k = - k f_k.
\end{equation}
\textbf{Case} $ p=2 $:
\begin{equation}
\xi^{(2)}_0 = \frac{L^{-1}_n}{4\pi^2} (f_0 + \sqrt{2} \sum_{k=1}^n f_k + \frac{\sqrt{2} \pi}{2n+1}\sum_{k=1}^n k g_k),
\end{equation}
\begin{equation}
\xi^{(2)}_k = \sqrt{2} \xi^{(2)}_0 -k^2 f_k,
\end{equation}
\begin{equation}
\eta^{(2)}_k =  \frac{2k}{2n+1} \sum^n_{k=1} kg_k -k^2 g_k - \frac{\sqrt{2} k \pi}{ 2n+1} \xi^{(2)}_0,
\end{equation}
where
\begin{equation*}
L_n:= \frac{1}{6}+  \frac{1}{2\pi^2} S_n -\frac{1}{4} \frac{2n}{2n+1}, \
S_n := \sum^n_{k=1} \frac{1}{k^2}.
\end{equation*}
\textbf{Case} $ p=3 $:
\begin{equation}
\xi^{(3)}_0 = \frac{T^{-1}_n}{4\pi^3} (f_0 + \sqrt{2} \sum\limits_{k=1}^n f_k + \frac{\sqrt{2} \pi}{2n+1}\sum\limits_{k=1}^n k g_k -F_n \sum\limits_{k=1}^n k^2 f_k),
\end{equation}
\begin{equation}
\xi^{(3)}_k =  - k^3 g_k +\frac{2}{2n+1} k^2 \sum\limits_{k=1}^n k g_k - \frac{2\pi k^2 }{(2n+1)^2} \sum^n_{k=1} k^2 f_k + \varepsilon_{n,k} \xi^{(3)}_0
\end{equation}
\begin{equation}
\eta^{(3)}_k= k^3 f_k  - \frac{ 2 k}{2n+1} \sum\limits_{k=1}^n k^2 f_k - \frac{\sqrt{2} \pi k}{2n+1} \xi^{(3)}_0,
\end{equation}
where
\begin{equation*}
T_n := \frac{1}{12}+ \frac{1}{2n+1} \frac{1}{\pi^2} S_n -\frac{1}{3} \frac{2n}{2n+1}+ \frac{n^2}{(2n+1)^2},
\end{equation*}
\begin{equation*}
F_n :=\frac{4\sqrt{2} \pi^2 }{2n+1} L_n, \ K_n := -2\pi^2 L_n, \varepsilon_{n,k} = \sqrt{2} (1+ \frac{2 k^2}{2n+1}K_n).
\end{equation*}
  \end{theorem}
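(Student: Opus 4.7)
The plan is to work directly from the matrix representation of $A_n^{(p)}$ promised in Appendix A and reduce the theorem to pure linear algebra. First I would compute $A^{(p)}$ acting on each basis element $\xi_j$ explicitly. For $p=1$ this gives $A^{(1)}(1/\sqrt{2\pi}) = x/\sqrt{2\pi}$, $A^{(1)}(\cos kt/\sqrt{\pi}) = \sin kx/(k\sqrt{\pi})$, and $A^{(1)}(\sin kt/\sqrt{\pi}) = (1-\cos kx)/(k\sqrt{\pi})$; for $p=2,3$ one iterates. The nontrivial step is expanding the non-trigonometric outputs ($x$, $x^2$, $x^3$ and products like $x\cos kx$, $x\sin kx$) in the Fourier basis on $(0,2\pi)$, truncating to $X_n$, and reading off the entries $m_{ij}^{(p)} = (A_n^{(p)}\xi_j,\xi_i)_{L^2}$. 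This yields the matrices displayed in Appendix A.

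Next I would establish well-posedness, i.e.\ that $M_n^{(p)}$ is invertible. Inspecting the sparsity pattern coming from the computations above, the matrix decomposes into a small coupled block (involving $\xi_0$, certain $f_k$ sums and certain $kg_k$ sums) and essentially diagonal relations for the remaining coefficients. Invertibility therefore reduces to verifying that the two scalar factors $L_n$ and $T_n$ defined in the theorem statement are nonzero for every $n \geq 1$. Since $S_n = \sum_{k=1}^n 1/k^2 \to \pi^2/6$, one expects $L_n$ to approach $1/6 + 1/12 - 1/4 = 0^+$ type bounds and $T_n$ a similar positive limit; a monotonicity/positivity argument using $S_n \geq 1$ and $\frac{2n}{2n+1} < 1$ should suffice to show strict positivity.

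Once invertibility is in hand, I would solve $M_n^{(p)} u_n^{p,\delta} = b_n^\delta$ by block back-substitution. For $p=1$, one row immediately gives $\eta_k^{(1)} = -k f_k$, another relates $\xi_k^{(1)}$ to $\xi_0^{(1)}$ and $g_k$, and summing the $\xi_k$ equations against the first row isolates $\xi_0^{(1)}$, producing formulas (4.1)--(4.3). For $p=2,3$, the same strategy works but the scalar equation for $\xi_0^{(p)}$ now involves the $\sum kg_k$ (resp.\ $\sum k^2 f_k$) sums, which is precisely where $L_n$ and $T_n$ enter as the inverse factors; solving by elimination yields (4.4)--(4.9). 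Finally, the stated representation for ${A_n^{(p)}}^\dagger P_n y$ follows by combining $u_n^{p,\delta} = (M_n^{(p)})^{-1} b_n^\delta$ with the basis identity (3.5) in the paper.

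The main obstacle is not the proof of invertibility but the bookkeeping of the Fourier expansions of $x^j\cos kx$ and $x^j\sin kx$ on $(0,2\pi)$ for $j\leq 3$ and $k \leq n$, which is where cross-terms between different $k$ arise and where the peculiar factors $2/(2n+1)$ and $\varepsilon_{n,k}$ originate. Getting these coefficients correct is what makes the formulas (4.7)--(4.9) nontrivial, and any miscalculation there would invalidate both the matrix expression in Appendix A and the resulting closed-form solution.
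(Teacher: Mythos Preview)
Your proposal is correct and matches the paper's approach: the paper does not give a detailed proof of Theorem 4.1 at all, it simply says ``With concrete expressions of $M^{(p)}_n$ in Appendix A, it is not difficult to obtain'' the theorem, so the entire argument is delegated to the block structure of the matrices listed in Appendix~A together with the positivity bounds on $L_n^{-1}$ and $T_n$ recorded later in Proposition~C.1. Your plan of computing $A^{(p)}\xi_j$, expanding the polynomial and $x^j\cos kx$, $x^j\sin kx$ terms in the Fourier basis, reading off $M_n^{(p)}$, and then back-substituting is exactly how one reproduces the Appendix~A entries and the formulas (4.1)--(4.9); the one point to be careful about is that Proposition~C.1 only asserts $T_n>0$ for $n\ge 5$, so for the $p=3$ well-posedness claim at $n=1,\dots,4$ you should verify $T_n\neq 0$ directly.
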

\begin{remark}
After we solve the Galerkin approximation system, we know that $ A^{(p)}_n: X_n \to X_n $ is one-to-one and surjective. For the usage of the proceeding section, we claim that $ \mathcal{N}( A^{(p)}_n ) = 0, \ \mathcal{R}(A^{(p)}) = X_n.$
\end{remark}
\begin{remark}
\indent With above analytic formulas, it is not difficult to figure out that
 \begin{equation}
 \Vert {A_n^{(p)}}^\dagger\Vert_{X_n \to X_n } \leq C^{(p)} n^p
 \end{equation}
 where $ C^{(1)} = \sqrt{3},   \  C^{(2)} \approx 11.8040,  \  C^{(3)} \approx 345.0754 $.  Here we specify that $ \Vert \cdot \Vert_{X_n} $ is induced by $ \Vert \cdot \Vert_{L^2} $, that is,  $ \Vert x_n \Vert_{X_n} := \Vert x_n \Vert_{L^2}$, $ \forall x_n \in X_n $. This give bound to the estimate of noise error.
\end{remark}

\section{Estimate on Approximation Error and Instability result}
 We use Lemma 2.4 to analyse the convergence and divergence of Galerkin method. The key point is the estimate of
  \begin{equation}
 \sup_n \Vert R^{(p)}_n \Vert < +\infty, \textrm{where} \  R^{(p)}_n:= {A^{(p)}_n}^\dagger P_n A^{(p)}.
 \end{equation}
 To gain an uniform upper bound for above formula, we first prepare two decay estimate of
 \begin{equation*}
  R^{(p)}_n(\frac{ \cos j t}{\sqrt{ \pi }} ) \ \textrm{and} \  R^{(p)}_n(\frac{ \sin j t}{\sqrt{ \pi }} )
 \end{equation*}
 with respect to integer variable $ j $:
\begin{lemma}
 For operators $ A^{(p)}, A^{(p)}_n $ defined in (2.4),(3.3) respectively, set
 \begin{equation*}
 ({A^{(p)}_n}^\dagger P_n A^{(p)} ( \frac{ \cos j t}{\sqrt{ \pi }} )(t)
 = \alpha^{(p)}_0 \frac{1}{\sqrt{2\pi}} + \sum^n_{k=1} \alpha^{(p)}_k \frac{ \cos k t}{\sqrt{ \pi }}
 + \sum^n_{k=1} \beta^{(p)}_k \frac{ \sin k t}{\sqrt{ \pi }}.
 \end{equation*}
When $ j \geq n+1 $,
  \begin{equation*}
 \alpha^{(p)}_0
 = \alpha^{(p)}_0 (n, j) = (({A^{(p)}_n}^\dagger P_n A^{(p)} ( \frac{ \cos j t}{\sqrt{ \pi }} ), \frac{1}{\sqrt{2\pi}}  )_{L^2},
 \end{equation*}
  \begin{equation*}
 \alpha^{(p)}_k
 =  \alpha^{(p)}_k (n, j) = (({A^{(p)}_n}^\dagger P_n A^{(p)} ( \frac{ \cos j t}{\sqrt{ \pi }} ), \frac{ \cos k t}{\sqrt{ \pi }} )_{L^2},
 \end{equation*}
 \begin{equation*}
 \beta^{(p)}_k
 = \beta^{(p)}_k (n, j) = (({A^{(p)}_n}^\dagger P_n A^{(p)} ( \frac{ \cos j t}{\sqrt{ \pi }} ), \frac{ \sin k t}{\sqrt{ \pi }} )_{L^2},
 \end{equation*}
and
\begin{equation}
\vert \alpha^{(p)}_0 \vert \leq   \frac{C_1^{(p)}}{j} , \ \vert \alpha^{(p)}_k \vert \leq  \frac{C_2^{(p)}}{j} , \vert \beta^{(p)}_k \vert \leq \frac{C_3^{(p)}}{j}, \  1 \leq k \leq n .
\end{equation}
where
\begin{eqnarray*}
C_1^{(1)}=0 , \  C_1^{(2)}=\sqrt{2}, \  C_1^{(3)}= 11\sqrt{2}, \\
C_2^{(1)}= 0, \ C_2^{(2)}= 2, \  C_2^{(3)}=23, \\
C_3^{(1)}=0, \  C_3^{(2)}= \pi, \ C_3^{(3)}=11\pi.
\end{eqnarray*}
When $ p =3 $, we need an extra condition $ n \geq 5 $ to maintain above estimate.
\end{lemma}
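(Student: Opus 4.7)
The plan is to prove the estimates by direct computation: first evaluate $A^{(p)}(\cos jt/\sqrt{\pi})$ explicitly, then apply $P_n$ (which annihilates the modes of frequency $>n$), and finally feed the result into the analytic inverse formulas of Theorem 4.1 to read off the three Fourier coefficients.

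The first step is routine integration by parts. For $p=1$ one gets $A^{(1)}(\cos jt/\sqrt{\pi}) = \sin jx/(j\sqrt{\pi})$, whose projection under $P_n$ vanishes whenever $j\geq n+1$; this immediately delivers the trivial bounds with $C_1^{(1)}=C_2^{(1)}=C_3^{(1)}=0$. For $p=2$ one finds $A^{(2)}(\cos jt/\sqrt{\pi}) = \frac{1}{j^2\sqrt{\pi}}(1-\cos jx)$, so for $j\geq n+1$ the projection reduces to the constant term $\frac{\sqrt 2}{j^2}\cdot\frac{1}{\sqrt{2\pi}}$. Feeding the data vector $b=(\sqrt 2/j^2,0,\ldots,0)^T$ into the $p=2$ formulas of Theorem 4.1 yields $\alpha_0^{(2)}=\frac{\sqrt 2}{4\pi^2 j^2}L_n^{-1}$, $\alpha_k^{(2)}=\sqrt 2\,\alpha_0^{(2)}$ and $\beta_k^{(2)}=-\frac{\sqrt 2\,k\pi}{2n+1}\alpha_0^{(2)}$. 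For $p=3$ the same procedure applied to $A^{(3)}(\cos jt/\sqrt{\pi})=\frac{1}{j^2\sqrt{\pi}}(x-\frac{\sin jx}{j})$ gives (for $j\geq n+1$) an RHS that keeps only a constant piece of size $\sim 1/j^2$ together with the low-frequency $\sin kx$ ($k\leq n$) expansion of the polynomial $x/(j^2\sqrt{\pi})$; plugging this into the $p=3$ formulas produces closed expressions for $\alpha_0^{(3)}$, $\alpha_k^{(3)}$, $\beta_k^{(3)}$.

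The main obstacle will be showing that the scalars $L_n^{-1}$ (for $p=2$) and $T_n^{-1}$, $F_n$, $K_n$, $\varepsilon_{n,k}$ (for $p=3$) do not spoil the $1/j$ decay. The key observation is that although $L_n\to 0$ as $n\to\infty$, one has $L_n = O(1/n)$, so $L_n^{-1}=O(n)$; since $j\geq n+1>n$, the product $n/j^2 \leq 1/j$ and the constant absorbs into $C_1^{(2)},C_2^{(2)},C_3^{(2)}$. I would make this precise by expanding $S_n = \pi^2/6 - \sum_{k>n}1/k^2$ and $\frac{2n}{2n+1}=1-\frac{1}{2n+1}$, obtaining a matching-leading-order cancellation whose residual is explicitly bounded below by a positive constant multiple of $1/n$, giving $L_n^{-1}\leq c_2 n$ with an explicit $c_2$. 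An analogous but heavier computation is needed for $T_n$: here the restriction $n\geq 5$ in the $p=3$ case will enter, because only from that $n$ onwards one can guarantee $T_n\geq c_3/n>0$ with a clean numerical constant $c_3$, and similarly $|F_n|,|K_n|,|\varepsilon_{n,k}|\leq C\cdot n$ uniformly in $k\leq n$.

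Once those scalar bounds are in hand, the final step is bookkeeping: multiply out each explicit formula for $\alpha_0^{(p)},\alpha_k^{(p)},\beta_k^{(p)}$, use $|\sum_{k=1}^n k g_k|\leq \text{const}\cdot n$-type trivialities (the $f_k,g_k$ of the input are at most $1/j^2$ times universal constants, since only a handful of Fourier modes of $A^{(p)}(\cos jt/\sqrt{\pi})$ are nonzero when $j\geq n+1$), and then use $n\leq j-1<j$ to convert any $n/j^2$ into $1/j$. The explicit numerical values $C_1^{(p)},C_2^{(p)},C_3^{(p)}$ in the statement are simply the resulting best constants after collecting terms; producing them is a mechanical calculation once the scalar bounds on $L_n$ and $T_n$ are fixed.
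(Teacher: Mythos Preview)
Your plan is essentially the paper's own proof: the paper first records the Fourier coefficients of $P_nA^{(p)}(\cos jt/\sqrt{\pi})$ for $j\geq n+1$ in Appendix~B (exactly your explicit integrations), then substitutes them into the formulas (4.4)--(4.9) of Theorem~4.1 and finishes with the scalar bounds of Proposition~C.1, using $n<j$ to trade $n/j^2$ for $1/j$.

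One caution on the $p=3$ arithmetic you sketched. The correct orders are $T_n\asymp \dfrac{1}{n(2n+1)}$ (not $c_3/n$), $K_n\in[-2/n,-1/(2n)]$, $F_n\asymp 1/(n(2n+1))$, and hence $\varepsilon_{n,k}=\sqrt2\bigl(1+\tfrac{2k^2}{2n+1}K_n\bigr)$ is \emph{bounded} (in fact $|\varepsilon_{n,k}|\leq\sqrt2$), not of size $Cn$. These exact orders matter: with $T_n^{-1}=O(n^2)$ the extra $1/(2n+1)$ that appears after the cancellation in $f_0+\tfrac{\sqrt2\pi}{2n+1}\sum_k kg_k$ brings $\alpha_0^{(3)}$ back down to $O(1/j)$, and then the term $\varepsilon_{n,k}\alpha_0^{(3)}$ in $\alpha_k^{(3)}$ needs $|\varepsilon_{n,k}|=O(1)$, not $O(n)$, to stay at $O(1/j)$. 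With those corrections your bookkeeping goes through and reproduces the stated constants.
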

\begin{proof}
 \textbf{Case $ p=1 $}: When \begin{math} j \geq n+1  \end{math}, substituting (B.1) into (4.1),(4.2) and (4.3),
 it follows that
\begin{equation*}
({A^{(1)}_n}^\dagger P_n A^{(1)} ( \frac{ \cos j t}{\sqrt{ \pi }} ))(t) = {A^{(1)}_n}^\dagger (0) =0.
\end{equation*}
This gives lemma for case $ p=1 $.
\newline \textbf{ Case $ p=2 $}:
Inserting (B.2) into (4.4), it follows that
\begin{equation*}
\alpha^{(2)}_0 = \frac{ \sqrt{2} L^{-1}_n }{  4 \pi^2 j^2  }.
  \end{equation*}
  Hence
\begin{equation}
 0 \leq  \alpha^{(2)}_0 \leq \frac{\sqrt{2}}{j}  \ ( \textrm{by (C.1)}).
\end{equation}
Besides, inserting (B.2) into (4.5),(4.6) respectively, it yields that
 \begin{equation*}
 \alpha^{(2)}_k= \sqrt{2}\alpha^{(2)}_0,
\beta^{(2)}_k = - \frac{\sqrt{2} k \pi }{ 2n+1 } \alpha^{(2)}_0.
\end{equation*}
Then, by (5.3),
 \begin{equation*} 0 \leq  \alpha^{(2)}_k \leq \frac{2}{j}, \
- \frac{\pi}{j} \leq  \beta^{(2)}_k \leq  0.
 \end{equation*}
\textbf{ Case $ p=3 $}:
Inserting (B.3) into (4.7), it follows that
\begin{equation*}
\alpha^{(3)}_0 = \frac{T^{-1}_n}{4\pi^3} \frac{\sqrt{2} \pi }{j^2} \frac{1}{2n+1}.
\end{equation*}
Notice Proposition C.1 (C.4),
\begin{equation*}
T_n \in [ \frac{1}{396} \frac{1}{n}\frac{1}{2n+1}, \frac{3}{40} \frac{1}{n(2n+1)}] , \ n \geq 5.
\end{equation*}
Hence,
\begin{equation}
 0  \leq \alpha^{(3)}_0 \leq \frac{11 \sqrt{2}}{j}, \  \textrm{where} \ \alpha^{(3)}_0 := \alpha^{(3)}_0 ( n, j ), \textrm{and} \ n \geq 5.
\end{equation}
 \indent Besides, insert (B.3) into (4.8), then it follows that
\begin{equation}
\alpha^{(3)}_k =      \frac{1}{2n+1} \frac{2k^2}{j^2} + \sqrt{2} (1+ \frac{2 k^2}{2n+1}K_n) \alpha^{(3)}_0.
\end{equation}
By Proposition C.1(C.2), it is routine to obtain that
\begin{equation*}
\frac{2 k^2}{2n+1}K_n \in  [-2,0] , \ 1+\frac{2 k^2}{2n+1}K_n \in [-1,1].
\end{equation*}
Hence, with (5.4),
\begin{equation*}
0 \leq  \vert \alpha^{(3)}_k \vert \leq  \frac{23}{j}.
\end{equation*}
 Further, insert (B.3) into (4.9), and we have
\begin{equation*}
\beta^{(3)}_k = - \frac{\sqrt{2} \pi k}{2n+1} \alpha^{(3)}_0.
\end{equation*}
Hence
\begin{equation*}
 - \frac{11\pi}{j} \leq \beta^{(3)}_k \leq  0 \ \  (\textrm{by (5.4)}).
\end{equation*}
\end{proof}
\begin{lemma}
 For operators $ A^{(p)}, A^{(p)}_n $ defined in (2.4),(3.3) respectively, set
 \begin{equation*}
 ({A^{(p)}_n}^\dagger P_n A^{(p)} ( \frac{ \sin j t}{\sqrt{ \pi }} )(t)
 = \theta^{(p)}_0 \frac{1}{\sqrt{2\pi}} + \sum^n_{k=1} \theta^{(p)}_k \frac{ \cos k t}{\sqrt{ \pi }} + \sum^n_{k=1} \omega^{(p)}_k \frac{ \sin k t}{\sqrt{ \pi }}.
 \end{equation*}
 When $ j \geq n+1 $,
 \begin{equation*}
 \theta^{(p)}_0
 = \theta^{(p)}_0 (n,j) = (({A^{(p)}_n}^\dagger P_n A^{(p)} ( \frac{ \sin j t}{\sqrt{ \pi }} ), \frac{1}{\sqrt{2\pi}}  )_{L^2},
 \end{equation*}
  \begin{equation*}
 \theta^{(p)}_k
 = \theta^{(p)}_k (n,j) = (({A^{(p)}_n}^\dagger P_n A^{(p)} ( \frac{ \sin j t}{\sqrt{ \pi }} ), \frac{ \cos k t}{\sqrt{ \pi }} )_{L^2},
 \end{equation*}
   \begin{equation*}
 \omega^{(p)}_k
 = \omega^{(p)}_k (n,j) = (({A^{(p)}_n}^\dagger P_n A^{(p)} ( \frac{ \sin j t}{\sqrt{ \pi }} ), \frac{ \sin k t}{\sqrt{ \pi }} )_{L^2},
 \end{equation*}
and
\begin{equation}
 \vert \theta^{(p)}_0 \vert \leq   \frac{C_4^{(p)}}{j} , \ \vert \theta^{(p)}_k \vert \leq  \frac{C_5^{(p)}}{j} , \vert \omega^{(p)}_k \vert \leq \frac{C_6^{(p)}}{j}, \   1 \leq k \leq  n.
  \end{equation}
where
\begin{eqnarray*}
C_4^{(1)}=\frac{\sqrt{2}}{\pi} , \  C_4^{(2)}=\frac{3\sqrt{2}}{2}, \  C_4^{(3)}= \frac{44\sqrt{2}}{3}, \\
C_5^{(1)}= \frac{2}{\pi}, \ C_5^{(2)}= 3, \  C_5^{(3)}=30, \\
C_6^{(1)}=0, \  C_6^{(2)}= 5, \ C_6^{(3)}=48.
\end{eqnarray*}
Notice that when $ p =3 $, we need the extra condition $ n \geq 5 $ to maintain above estimate.
\end{lemma}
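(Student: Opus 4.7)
The plan is to mirror the proof of Lemma 5.1 line by line, replacing $\cos jt$ by $\sin jt$ and tracking the new boundary contributions. Concretely, I would begin by computing $A^{(p)}(\sin jt/\sqrt{\pi})$ via repeated integration by parts; for each $p \in \{1,2,3\}$ this produces a low-degree polynomial in $x$ whose coefficients carry explicit powers of $1/j$, plus an oscillatory tail built from $\cos jx$ and $\sin jx$ with coefficients of size $O(1/j^{p})$. This is the sine-analogue of the formulas (B.1)--(B.3) used in the cosine case, and presumably sits in a companion batch of identities in Appendix B which I would invoke directly.

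Since $j \geq n+1$ forces $\cos jx, \sin jx \in X_n^{\perp}$, applying $P_n$ annihilates the oscillatory tail, and only the projection of the polynomial part survives. I would then feed the resulting low-order expression into the analytic inverse formulas (4.4)--(4.9) of Theorem 4.1 to read off $\theta^{(p)}_0, \theta^{(p)}_k, \omega^{(p)}_k$ in closed form. Because every surviving coefficient already carries at least one factor of $1/j$ (for $p=1$ only a single constant of size $1/j$ remains, while for $p=2,3$ a linear combination of $1$ and $x$, each weighted by $1/j$, contributes), the $1/j$ decay passes cleanly through the finite-dimensional inverse.

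Finally, I would bound the resulting expressions by absolute constants using the auxiliary estimates of Proposition C.1, namely the uniform control on $L_n^{-1}$, $T_n^{-1}$, $K_n$, $F_n$ together with their interactions with the weights $k/(2n+1)$ and $k^2/(2n+1)$, exactly as in the proof of Lemma 5.1. The only nontrivial obstacle is the case $p=3$: the dangerous quantity is $T_n^{-1}$, and one needs the two-sided bound $T_n \asymp 1/(n(2n+1))$ from (C.4), which in turn requires $n \geq 5$. With that bound in hand, the product $T_n^{-1}/(2n+1)$ stays $O(1)$, the constants $C_4^{(3)}, C_5^{(3)}, C_6^{(3)}$ emerge as absolute numbers, and the stated bounds follow in parallel with the cosine case, including the extra hypothesis $n \geq 5$.
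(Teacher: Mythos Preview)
Your proposal is correct and matches the paper's proof essentially step for step: the paper inserts the precomputed Fourier data (B.4)--(B.6) for $P_nA^{(p)}(\sin jt/\sqrt{\pi})$ into the explicit solution formulas of Theorem 4.1 (that is, (4.1)--(4.3) for $p=1$, (4.4)--(4.6) for $p=2$, (4.7)--(4.9) for $p=3$), then bounds the resulting expressions using Proposition C.1, with the $n\geq 5$ restriction entering exactly through the two-sided estimate (C.4) on $T_n$. The only minor slip is that for $p=1$ the relevant formulas are (4.1)--(4.3) rather than (4.4)--(4.9), but this does not affect the argument.
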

\begin{proof}\indent
 \textbf{Case $ p=1 $}:
 When \begin{math} j \geq n+1 \end{math}, insert (B.4) into (4.1),(4.2),(4.3), then
\begin{equation*}
 ({A^{(1)}_n}^\dagger P_n A^{(1)} ( \frac{\sin j t}{\sqrt{\pi}} ))(t)
= {A^{(1)}_n}^\dagger  (   \frac{\sqrt{2} }{ j } \cdot \frac{1}{\sqrt{2\pi}}     )
= \frac{\sqrt{2}}{ \pi j } \cdot \frac{1}{\sqrt{2\pi}}+ \sum^n_{ k=1 } \frac{2}{ \pi j } \cdot \frac{\cos k t}{\sqrt{\pi}} .
\end{equation*}
This gives lemma for case $ p=1 $.
\newline \textbf{Case $ p=2 $}:
Insert (B.5) into (4.4), and it follows that
\begin{equation*}
\theta^{(2)}_0 = \frac{1}{2n+1}\frac{  \sqrt{2}   }{ 4\pi j }L^{-1}_n.
\end{equation*}
With Proposition C.1 (C.1), it follows that
\begin{equation}
0 \leq \theta^{(2)}_0 \leq \frac{3\sqrt{2}}{ 2 j }.
\end{equation}
Besides, insert (B.5) into (4.5),(4.6), then
\begin{equation*}
\theta^{(2)}_k =\sqrt{2} \theta^{(2)}_0, \
\omega^{(2)}_k =  \frac{1}{j} \frac{ 2k }{ 2n+1 } -\frac{k}{2n+1} \cdot \sqrt{2} \pi \theta^{(2)}_0.
  \end{equation*}
Then by (5.7) we have
  \begin{equation*}
 0 \leq \theta^{(2)}_k \leq  \frac{3}{  j }, \ -\frac{5}{j} \leq \omega^{(2)}_k \leq  \frac{1}{j}.
\end{equation*}
 \textbf{Case $ p=3 $}:
Insert (B.6) into (4.7), and it follows that
\begin{equation*}
\theta^{(3)}_0 = \frac{T^{-1}_n}{4\pi^3}\frac{1}{j}(F_n  - \frac{\sqrt{2}}{j^2}).
\end{equation*}
Notice that it is easy to obtain that
\begin{equation*}
 \vert F_n  - \frac{\sqrt{2}}{j^2} \vert \leq   \frac{4 \sqrt{2}}{n(2n+1)}
\end{equation*}
from Proposition C.1 (C.3).
In this way, with Proposition C.1 (C.4), when $ n \geq 5 $,
\begin{equation*}
 \vert \theta^{(3)}_0 \vert = \frac{T^{-1}_n}{4\pi^3}\frac{1}{j}\vert F_n  - \frac{\sqrt{2}}{j^2} \vert
\leq 396 n(2n+1)\cdot \frac{1}{4\pi^3} \frac{1}{j}\cdot  \frac{4 \sqrt{2}}{n(2n+1)}
\end{equation*}
\begin{equation}
\leq \frac{396\sqrt{2}}{27} \frac{1}{j} =\frac{44\sqrt{2}}{3} \frac{1}{j}.
\end{equation}
Besides, insert (B.6) into (4.8), and we have
\begin{equation*}
 \theta^{(3)}_k = \frac{1}{(2n+1)^2} \frac{2\pi k^2}{j} + \sqrt{2} (1+ \frac{2k^2}{2n+1} K_n) \theta^{(3)}_0.
\end{equation*}
Hence, by (5.8)
\begin{equation*}
 \vert \theta^{(3)}_k \vert \leq  \frac{4k^2}{(2n+1)^2} \frac{\frac{\pi}{2}}{j} + \sqrt{2} \theta^{(3)}_0
 \leq   \frac{2}{j} + \sqrt{2} \theta^{(3)}_0 \leq \frac{2}{j} + \sqrt{2} \cdot \frac{44\sqrt{2}}{3} \frac{1}{j} = \frac{30}{j}.
\end{equation*}
Further, insert (B.6) into (4.9), and it follows that
  \begin{equation*}
\omega^{(3)}_k =  \frac{2k}{2n+1} \frac{1}{j} - \frac{ k }{ 2n+1 } \sqrt{2} \pi \theta^{(3)}_0.
  \end{equation*}
  Hence, by (5.8)
  \begin{equation*}
\vert \omega^{(3)}_k \vert \leq  \frac{1}{j} + \frac{ \sqrt{2}}{2} \pi \vert \theta^{(3)}_0 \vert
 \leq   \frac{1}{j} + \frac{ \sqrt{2}}{2} \pi \frac{44\sqrt{2}}{3} \frac{1}{j} \leq \frac{48}{j}.
  \end{equation*}
\end{proof}
\begin{lemma}
Set $ A^{(p)}, A_n^{(p)} $ defined in (2.4), (3.3) respectively. Then
\begin{equation*}
 \Vert K^{(p)}_n \Vert_{L^2 \to L^2} \leq \kappa^{(p)}, \ \forall n \in \mathrm{N}
\end{equation*}
where
\begin{equation*}
K^{(p)}_n := {A_n^{(p)}}^\dagger P_n A^{(p)}(I - P_n): L^2(0, 2\pi) \longrightarrow X_n \subseteq L^2(0, 2\pi)
\end{equation*}
\end{lemma}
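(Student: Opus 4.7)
The plan is to expand an arbitrary $f \in L^2(0,2\pi)$ in the trigonometric basis, discard the low‐frequency part via $I-P_n$, and then apply the two pointwise coefficient decay estimates from Lemmas 5.1 and 5.2 term by term. Concretely, write
\begin{equation*}
(I-P_n)f \;=\; \sum_{j\geq n+1}\Bigl[a_j\,\tfrac{\cos jt}{\sqrt\pi} + b_j\,\tfrac{\sin jt}{\sqrt\pi}\Bigr],\qquad \sum_{j\geq n+1}(a_j^2+b_j^2)\;\leq\;\|f\|_{L^2}^2.
\end{equation*}
Since $K_n^{(p)}$ is linear and continuous (each $A_n^{(p)}{}^\dagger$ is a finite-rank bounded operator by Theorem 4.1/Remark 4.2), applying it commutes with this series; the resulting element of $X_n$ has Fourier coefficients
\begin{equation*}
c_0 = \sum_{j\geq n+1}[a_j\alpha_0^{(p)}(n,j) + b_j\theta_0^{(p)}(n,j)],
\end{equation*}
and analogous formulas for $c_k,d_k$ with $\alpha_k^{(p)},\theta_k^{(p)}$ and $\beta_k^{(p)},\omega_k^{(p)}$ respectively.

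Next I would compute $\|K_n^{(p)}f\|_{L^2}^2 = c_0^2 + \sum_{k=1}^n(c_k^2 + d_k^2)$ and estimate each piece by Cauchy--Schwarz applied in the index $j$. For instance,
\begin{equation*}
c_k^2 \;\leq\; \Bigl(\sum_{j\geq n+1}(a_j^2+b_j^2)\Bigr)\Bigl(\sum_{j\geq n+1}[(\alpha_k^{(p)})^2 + (\theta_k^{(p)})^2]\Bigr)\;\leq\;\|f\|_{L^2}^2 \bigl[(C_2^{(p)})^2 + (C_5^{(p)})^2\bigr]\sum_{j\geq n+1}\tfrac{1}{j^2},
\end{equation*}
using the pointwise bounds $|\alpha_k^{(p)}|,|\theta_k^{(p)}|\leq C/j$ from Lemmas 5.1 and 5.2. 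The crucial observation is that $\sum_{j\geq n+1}j^{-2}\leq 1/n$, which exactly absorbs the factor $n$ picked up when summing over $k\in\{1,\dots,n\}$. The same treatment applies to $d_k^2$ via $\beta_k^{(p)},\omega_k^{(p)}$, and to $c_0^2$ via $\alpha_0^{(p)},\theta_0^{(p)}$ (the latter contributes only a $1/n$ term). Collecting everything yields
\begin{equation*}
\|K_n^{(p)}f\|_{L^2}^2 \;\leq\; \Bigl[(C_2^{(p)})^2+(C_3^{(p)})^2+(C_5^{(p)})^2+(C_6^{(p)})^2 + O(1/n)\Bigr]\|f\|_{L^2}^2,
\end{equation*}
so one may take $\kappa^{(p)}$ to be the square root of the bracket.

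I expect the only real obstacle to be the sharpness of the bookkeeping: the $1/j$ decay from Lemmas 5.1--5.2 is precisely what is needed, so a sloppier estimate (say $C/\sqrt j$) would fail to cancel the $n$ from the $k$-sum. A minor secondary point is the restriction $n\geq 5$ for $p=3$ in Lemmas 5.1--5.2: for the finitely many values $n\in\{1,2,3,4\}$ one bounds $\|K_n^{(3)}\|$ directly by $\|A_n^{(3)}{}^\dagger\|\cdot\|A^{(3)}\|\leq C^{(3)}n^3\|A^{(3)}\|$ (from Remark 4.3), enlarging $\kappa^{(3)}$ if necessary so the uniform bound is maintained for all $n\in\mathbb N$.
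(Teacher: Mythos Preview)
Your proposal is correct and follows essentially the same route as the paper: expand $(I-P_n)f$ in the trigonometric basis, push $K_n^{(p)}$ through term by term via continuity, apply Cauchy--Schwarz in $j$ together with the $1/j$ decay of Lemmas 5.1--5.2, and use $\sum_{j\geq n+1}j^{-2}\leq 1/n$ to cancel the $n$ from the $k$-sum. The paper records the resulting constant simply as $\kappa^{(p)}=\bigl(\sum_{i=1}^{6}(C_i^{(p)})^2\bigr)^{1/2}$, i.e.\ it bounds your $O(1/n)$ contribution from $c_0^2$ by $(C_1^{(p)})^2+(C_4^{(p)})^2$ rather than tracking it separately; it also does not comment on the finitely many small-$n$ cases for $p=3$ that you (rightly) flag.
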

\begin{remark}
With direct computations, we can obtain that
  \begin{equation*}
  \kappa^{(1)} \approx 0.7801, \    \kappa^{(2)} \approx 7.3729, \  \kappa^{(3)}  \approx 74.8198.
  \end{equation*}
\end{remark}
\begin{proof}
Set $ v =a_0 \frac{1}{\sqrt{2\pi}} +\sum^\infty_{k=1} a_k \frac{\cos k t}{\sqrt{\pi}} +\sum^\infty_{k=1} b_k \frac{\sin k t}{\sqrt{\pi}} $
such that $ \Vert v \Vert_{L^2} = 1 $, that is, $ a^2_0 + \sum^\infty_{k=1} a^2_k + \sum^\infty_{k=1} b^2_k=1. $
We consider the estimate on $ \Vert K^{(p)}_n v \Vert_{L^2}, \ n \in \mathrm{N} $.
\newline \indent Since $ {A^{(p)}_n}^\dagger P_n A^{(p)}  $ $ (n \in \mathrm{N})$ is continuous with Remark 4.2,
\begin{equation*}
K^{(p)}_n v = {A^{(p)}_n}^\dagger P_n A^{(p)} (I-P_n) v
\end{equation*}
\begin{equation*}
= {A^{(p)}_n}^\dagger P_n A^{(p)}  ( \sum^\infty_{j=n+1} a_j \frac{\cos j t}{\sqrt{\pi}} +\sum^\infty_{j=n+1} b_j \frac{\sin j t}{\sqrt{\pi}})
\end{equation*}
\begin{equation*}
=\sum_{j=n+1}^\infty a_j {A^{(p)}_n}^\dagger P_n A^{(p)} ( \frac{\cos j t}{\sqrt{\pi}})  + \sum_{j=n+1}^\infty b_j {A^{(p)}_n}^\dagger P_n A^{(p)}  ( \frac{\sin j t}{\sqrt{\pi}}   ) .
\end{equation*}
Recall Lemma 5.1 and Lemma 5.2. It follows that
\begin{equation}
 {A^{(p)}_n}^\dagger P_n A^{(p)}(I-P_n) v
= H_0 \frac{1}{\sqrt{2\pi}} + \sum^n_{k=1} H_k  \frac{\cos k t}{\sqrt{\pi}}
+ \sum^n_{k=1} G_k \frac{\sin k t}{\sqrt{\pi}},
 \end{equation}
 where
 \begin{equation*}
 H_0 = \sum^\infty_{j=n+1}(a_j \alpha^{(p)}_0(n,j) +b_j \theta^{(p)}_0(n,j) ),
 \end{equation*}
  \begin{equation*}
 H_k = \sum_{j=n+1}^\infty( a_j \alpha^{(p)}_k(n,j) +b_j \theta^{(p)}_k(n,j)),
 \end{equation*}
   \begin{equation*}
 G_k = \sum_{j=n+1}^\infty(  a_j \beta^{(p)}_k(n,j)+ b_j \omega^{(p)}_k(n,j)).
 \end{equation*}
By (5.2), (5.6) and the Cauchy inequality, we have
   \begin{equation}
H^2_0 \leq \frac{{C^{(p)}_1}^2+{C^{(p)}_4}^2}{n} \sum^\infty_{j=n+1} (a^2_j+b^2_j),
\end{equation}
\begin{equation}
H^2_k\leq  \frac{{C^{(p)}_2}^2+{C^{(p)}_5}^2}{n} \sum^\infty_{j=n+1} (a^2_j+b^2_j),
\end{equation}
 \begin{equation}
 G^2_k \leq \frac{{C^{(p)}_3}^2+{C^{(p)}_6}^2}{n} \sum^\infty_{j=n+1} (a^2_j+b^2_j).
 \end{equation}
 (5.10),(5.11),(5.12) together with (5.9) give that, for all $ v $ such that $ \Vert v \Vert_{L^2} = 1 $,
\begin{equation*}
  \Vert {K^{(p)}_n} v \Vert_{L^2}^2 \leq ( \sum^6_{i=1} {C^{(p)}_i}^2) \sum^\infty_{j=n+1} (a^2_j+b^2_j)
  \end{equation*}
\begin{equation*}
\leq (\sum^6_{i=1} {C^{(p)}_i}^2) \Vert v \Vert_{L^2}^2  \  \  (\kappa^{(p)}:=  \sqrt{\sum^6_{i=1} {C^{(p)}_i}^2}).
  \end{equation*}
  where $ C^{(p)}_i \ (p=1,2,3; \ i = 1,2, \cdots, 6) $
are all constants defined in Lemma 5.1 and Lemma 5.2.
\end{proof}

  \begin{theorem}
The Groetsch regularity holds for Galerkin setting (3.2); that is,
\begin{equation*}
\sup_n \Vert R^{(p)}_n \Vert_{L^2 \longrightarrow L^2} \leq \gamma^{(p)}   < \infty \ (\gamma^{(p)} := 1 + \kappa^{(p)}),
\end{equation*}
where
\begin{equation*}
 R^{(p)}_n := {A_n^{(p)}}^\dagger P_n A^{(p)} : L^2(0, 2\pi) \longrightarrow X_n \subseteq L^2(0, 2\pi)
\end{equation*}
and $ A^{(p)}, A_n^{(p)} $ are defined in (2.4), (3.3) respectively.
\end{theorem}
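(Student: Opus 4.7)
The plan is to split $R^{(p)}_n$ into two pieces using the decomposition $I = P_n + (I-P_n)$, so that one piece collapses to $P_n$ and the other is precisely the operator $K^{(p)}_n$ whose norm was already bounded in Lemma 5.3.

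More precisely, first I would write, for any $v \in L^2(0,2\pi)$,
\begin{equation*}
R^{(p)}_n v = {A^{(p)}_n}^\dagger P_n A^{(p)} P_n v + {A^{(p)}_n}^\dagger P_n A^{(p)} (I - P_n) v.
\end{equation*}
The second summand is exactly $K^{(p)}_n v$. For the first summand, I would use the definition $A^{(p)}_n = P_n A^{(p)} P_n$, which gives $P_n A^{(p)} P_n v = A^{(p)}_n (P_n v)$, so the first summand equals ${A^{(p)}_n}^\dagger A^{(p)}_n (P_n v)$.

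Now I would invoke Remark 4.1, which asserts that $A^{(p)}_n : X_n \to X_n$ is bijective with $\mathcal{N}(A^{(p)}_n) = \{0\}$ and $\mathcal{R}(A^{(p)}_n) = X_n$. In this finite-dimensional setting the Moore--Penrose inverse reduces to the genuine inverse, so ${A^{(p)}_n}^\dagger A^{(p)}_n = I_{X_n}$. Since $P_n v \in X_n$, the first summand simplifies to $P_n v$, leaving the clean identity
\begin{equation*}
R^{(p)}_n v = P_n v + K^{(p)}_n v.
\end{equation*}

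Finally, the triangle inequality together with $\|P_n\|_{L^2 \to L^2} \leq 1$ (orthogonal projection) and Lemma 5.3 yield
\begin{equation*}
\|R^{(p)}_n v\|_{L^2} \leq \|P_n v\|_{L^2} + \|K^{(p)}_n v\|_{L^2} \leq (1 + \kappa^{(p)}) \|v\|_{L^2},
\end{equation*}
uniformly in $n$, which gives $\sup_n \|R^{(p)}_n\| \leq \gamma^{(p)}$ as claimed. There is no real obstacle here once Lemma 5.3 is in hand; the only subtle point is the appeal to Remark 4.1 to identify ${A^{(p)}_n}^\dagger A^{(p)}_n$ with the identity on $X_n$, which relies on the explicit analytic formulas of Theorem 4.1 to guarantee the invertibility of $A^{(p)}_n$ on $X_n$.
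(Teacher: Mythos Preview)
Your proposal is correct and follows essentially the same route as the paper: both split $R^{(p)}_n$ via $I = P_n + (I - P_n)$, identify the second piece as $K^{(p)}_n$ (bounded by Lemma~5.3), and reduce the first piece to $P_n$ using Remark~4.1. The only cosmetic difference is that the paper phrases the simplification ${A^{(p)}_n}^\dagger A^{(p)}_n = I_{X_n}$ via the general Moore--Penrose identity ${A^{(p)}_n}^\dagger A^{(p)}_n = P_{\mathcal{N}(A^{(p)}_n)^{\perp_n}}$ before invoking $\mathcal{N}(A^{(p)}_n)=\{0\}$, whereas you appeal directly to the invertibility of $A^{(p)}_n$.
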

\begin{proof}
Since
\begin{equation*}
{A^{(p)}_n}^\dagger P_n A^{(p)}  = K^{(p)}_n +  {A^{(p)}_n}^\dagger P_n A^{(p)} P_n
= K^{(p)}_n +  {A^{(p)}_n}^\dagger {A^{(p)}_n}
 \end{equation*}
\begin{equation*}
= K^{(p)}_n + P_{\mathcal{N}(A^{(p)}_n)^{\perp_n}} = K^{(p)}_n  +  P_n \ (\textrm{Since} \ \mathcal{N}(A^{(p)}_n)={0} \ \textrm{in Remark 4.1}),
\end{equation*}
by Lemma 5.3 we have
  \begin{equation*}
\Vert {A^{(p)}_n}^\dagger P_n A^{(p)} \Vert_{L^2 \to L^2 } \leq \gamma^{(p)},  \  n \in \mathrm{N}, \
\gamma^{(p)} := 1+\kappa^{(p)}.
\end{equation*}
\end{proof}
After the examination of (5.1), we have an estimate on the approximation error.
\begin{corollary}
For $  {A^{(p)}_n} $ defined as (3.3), we have
\begin{equation*}
  \Vert {A^{(p)}_n}^\dagger P_n y - y^{(p)} \Vert_{L^2} \leq (\gamma^{(p)}+1) \Vert (I- P_n) y^{(p)} \Vert_{L^2} \longrightarrow 0 \ (n \to \infty)
  \end{equation*}
  for every $ y \in   {\mathcal{H}}^p_0(0,2\pi) $. Furthermore, with a priori information $ y^{(p)} \in H_{per}^l (0, 2\pi) $, it yields that
  \begin{equation*}
   \Vert {A^{(p)}_n}^\dagger P_n y - y^{(p)} \Vert_{L^2} \leq \frac{(\gamma^{(p)}+1)}{n} \Vert y^{(p)} \Vert_{H_{per}^l},
  \end{equation*}
where $ \gamma^{(p)} $ is constant given in Theorem 5.1.
\end{corollary}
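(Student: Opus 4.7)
The plan is to apply Lemma 2.4(a) to the operator $A = A^{(p)}$ in the Galerkin setting (3.2), and then collapse the right-hand side using the structural facts established earlier in the excerpt. Specifically, I would invoke (2.7) with $A \leftarrow A^{(p)}$, $A_n \leftarrow A^{(p)}_n$:
\begin{equation*}
\Vert {A^{(p)}_n}^\dagger P_n P_{\overline{\mathcal{R}(A^{(p)})}} y - {A^{(p)}}^\dagger y \Vert_{L^2} \leq \Vert P_{\mathcal{N}(A^{(p)}_n)} {A^{(p)}}^\dagger y \Vert_{L^2} + \Vert R^{(p)}_n - I \Vert \, \Vert (I - P_n) {A^{(p)}}^\dagger y \Vert_{L^2}.
\end{equation*}

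Next I would simplify each ingredient. First, Lemma 2.2 gives $P_{\overline{\mathcal{R}(A^{(p)})}} = I$, so the left side equals $\Vert {A^{(p)}_n}^\dagger P_n y - {A^{(p)}}^\dagger y \Vert_{L^2}$. Second, Lemma 2.3 identifies ${A^{(p)}}^\dagger y = y^{(p)}$ for $y \in \mathcal{H}^p_0(0,2\pi)$. Third, Remark 4.1 yields $\mathcal{N}(A^{(p)}_n) = \{0\}$, which kills the first term on the right-hand side. Finally, Theorem 5.1 gives $\sup_n \Vert R^{(p)}_n \Vert_{L^2 \to L^2} \leq \gamma^{(p)}$, so by the triangle inequality $\Vert R^{(p)}_n - I \Vert \leq \gamma^{(p)} + 1$. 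Substituting these four facts yields the main inequality
\begin{equation*}
\Vert {A^{(p)}_n}^\dagger P_n y - y^{(p)} \Vert_{L^2} \leq (\gamma^{(p)} + 1)\, \Vert (I - P_n) y^{(p)} \Vert_{L^2}.
\end{equation*}

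For the convergence $\to 0$ as $n \to \infty$, I observe that $y^{(p)} \in L^2(0,2\pi)$ whenever $y \in \mathcal{H}^p_0(0,2\pi)$, and the projections $P_n$ map onto the trigonometric subspaces $X_n$ whose union is dense in $L^2(0,2\pi)$; hence $\Vert (I - P_n) y^{(p)} \Vert_{L^2} \to 0$ by standard Fourier theory. For the refined quantitative bound under the a priori information $y^{(p)} \in H^l_{per}(0,2\pi)$, I would just apply the Jackson-type estimate in Lemma 2.5 with $r = l$ to the factor $\Vert (I - P_n) y^{(p)} \Vert_{L^2}$, obtaining the bound $\tfrac{1}{n^l}\Vert y^{(p)}\Vert_{H^l_{per}}$, and chain this with the main inequality.

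There is essentially no hard step here: the corollary is a direct assembly of Lemma 2.2, Lemma 2.3, Remark 4.1, Theorem 5.1, and Lemma 2.5, all of which have been established above. The only small point of care is to match the hypothesis $y \in \mathcal{D}(A^{(p)\dagger})$ of Lemma 2.4(a) with the given assumption $y \in \mathcal{H}^p_0(0,2\pi)$, which is precisely what Lemma 2.1 together with the computation $\mathcal{D}({A^{(p)}}^\dagger) = \mathcal{R}(A^{(p)}) = \mathcal{H}^p_0(0,2\pi)$ (done just after Lemma 2.2) permits.
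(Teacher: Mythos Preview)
Your proposal is correct and follows essentially the same route as the paper: invoke Lemma 2.4(a) after identifying $P_{\overline{\mathcal{R}(A^{(p)})}}=I$ (Lemma 2.2) and ${A^{(p)}}^\dagger y = y^{(p)}$ (Lemma 2.3), kill the $\mathcal{N}(A^{(p)}_n)$-term via Remark 4.1, bound $\Vert R^{(p)}_n - I\Vert$ by $\gamma^{(p)}+1$ via Theorem 5.1, and finish with Lemma 2.5. Your write-up is in fact more explicit than the paper's (which just lists the lemmas used), and your note that Lemma 2.5 gives the factor $\tfrac{1}{n^l}$ rather than the $\tfrac{1}{n}$ appearing in the displayed statement is the correct reading, consistent with how the bound is used downstream (e.g.\ in Theorem 6.1).
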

\begin{proof}
By Lemma 2.2, 2,3,   for $ y \in   {\mathcal{H}}^p_0(0,2\pi) $,
\begin{equation*}
   \Vert {A^{(p)}_n}^\dagger P_n y - y^{(p)} \Vert_{L^2}
= \Vert {A^{(p)}_n}^\dagger P_n P_{\overline{\mathcal{R}(A^{(p)})}}  y - {A^{(p)}}^\dagger y \Vert_{L^2}.
\end{equation*}
\indent Using Lemma 2.4, Remark 4.1, Theorem 5.1, it yields that
 \begin{equation}
   \Vert {A^{(p)}_n}^\dagger P_n y - y^{(p)} \Vert_{L^2}
 \leq  (\gamma^{(p)}+1) \Vert (I-P_n) y^{(p)} \Vert_{L^2}.
 \end{equation}
 Now provided with a priori information $ y^{(p)} \in H_{per}^{l}(0,2\pi), \ l > 0. $ It yields that
\begin{equation*}
 \Vert (I-P_n) y^{(p)} \Vert_{L^2} \leq \frac{1}{n^l} \Vert y^{(p)} \Vert_{H_{per}^l}
 \end{equation*}
 from Lemma 2.5. This improves (5.13) to the latter result needed.
\end{proof}
\begin{corollary}
For $  {A^{(p)}_n} $ defined as (3.3), we have
\begin{equation*}
  \Vert {A^{(p)}_n}^\dagger P_n y\Vert_{L^2} \longrightarrow \infty \ (n \to \infty)
  \end{equation*}
  for every $ y \in L^2 (0,2\pi) \setminus  \mathcal{R}(A^{(p)}) $, where $ \mathcal{R}(A^{(p)}) = {\mathcal{H}}^p_0(0,2\pi) $.
\end{corollary}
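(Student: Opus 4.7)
The plan is to deduce this directly from part (b) of Lemma 2.4, once we verify the hypotheses in the current concrete setting.

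First I would translate the domain condition. By Lemma 2.2 the range $\mathcal{R}(A^{(p)})$ is dense in $L^2(0,2\pi)$, so $\mathcal{R}(A^{(p)})^{\perp}=\{0\}$, giving
\[
\mathcal{D}({A^{(p)}}^\dagger)=\mathcal{R}(A^{(p)})\oplus\mathcal{R}(A^{(p)})^\perp=\mathcal{R}(A^{(p)})=\mathcal{H}^p_0(0,2\pi).
\]
Thus the hypothesis $y\in L^2(0,2\pi)\setminus\mathcal{R}(A^{(p)})$ is exactly $y\notin\mathcal{D}({A^{(p)}}^\dagger)$, which is the input needed for Lemma 2.4(b).

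Next I would verify that Lemma 2.4 actually applies. Its standing assumption is the Groetsch regularity $\sup_n\|R^{(p)}_n\|<\infty$, and this was just established in Theorem 5.1 with explicit bound $\gamma^{(p)}=1+\kappa^{(p)}$. I would also use Lemma 2.2 to rewrite $P_{\overline{\mathcal{R}(A^{(p)})}}=I$ on $L^2(0,2\pi)$, so that the object produced by Lemma 2.4(b) is exactly the one appearing in the statement:
\[
\|{A^{(p)}_n}^\dagger P_n P_{\overline{\mathcal{R}(A^{(p)})}}y\|_{L^2}=\|{A^{(p)}_n}^\dagger P_n y\|_{L^2}.
\]
Applying Lemma 2.4(b) then yields $\|{A^{(p)}_n}^\dagger P_n y\|_{L^2}\to\infty$ as $n\to\infty$, which is the conclusion.

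There is essentially no obstacle: the whole argument is bookkeeping, matching the abstract hypotheses of Lemma 2.4(b) with the ingredients collected in Lemmas 2.2, 2.3 and Theorem 5.1. The only thing to be careful about is that the identification $P_{\overline{\mathcal{R}(A^{(p)})}}=I$ is genuinely needed, since without it the quantity controlled by Lemma 2.4(b) would be $\|{A^{(p)}_n}^\dagger P_n P_{\overline{\mathcal{R}(A^{(p)})}}y\|$ rather than $\|{A^{(p)}_n}^\dagger P_n y\|$ itself. Once that identification is in place, the corollary follows in one line.
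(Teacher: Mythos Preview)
Your proposal is correct and matches the paper's own proof essentially line for line: the paper also invokes Groetsch regularity (Theorem 5.1, i.e.\ the bound (5.1)) together with Lemma 2.4(b), after noting $\mathcal{D}({A^{(p)}}^\dagger)=\mathcal{R}(A^{(p)})$. Your explicit remark that $P_{\overline{\mathcal{R}(A^{(p)})}}=I$ (from Lemma 2.2) is needed to identify the two norms is a detail the paper leaves implicit, but otherwise the arguments coincide.
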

\begin{proof}
Lemma 2.4 tells that $ \mathcal{D}({A^{(p)}}^\dagger) = \mathcal{R}(A^{(p)}) $.
Since the estimate of (5.1) holds, with Lemma 2.4 (b), the result surely holds.
\end{proof}
Here Corollary 5.2 tells us two questions:
\begin{itemize}
\item the first question is, for $ p $ order numerical differentiation, when $ y \in {\mathcal{H}}^p_0(0,2\pi)$, with interfuse of noise $ \delta y $, $ y^\delta $ would generally locate in $ L^2 (0,2\pi) \setminus \mathcal{H}^p_0 (0,2\pi) $. Then with increasing choice of index $ n $ independent of noise level $ \delta $,
\begin{equation*}
  \Vert {A^{(p)}_n}^\dagger P_n y^\delta \Vert_{L^2} \longrightarrow \infty \ (n \to \infty)
  \end{equation*}
  This fact shows that, without proper parameter choice strategy for $ n^{(p)} := n^{(p)}(\delta) $, numerical scheme constructed as $ {A^{(p)}_n}^\dagger P_n y^\delta $ is natively instable.
  \item the second question is a worse one. With the more general setting  $ y \in H^p( 0, 2\pi) $ for $ p $ order Numerical differentiation, if $ y \in H^p(0,2\pi) \setminus {\mathcal{H}}^p_0(0,2\pi) $, then, with any parameter choice strategy $ n^{(p)} := n^{(p)}(\delta) $ such that $ n^{(p)}(\delta) \to +\infty \ (\delta \to 0^+) $, approximation error
  \begin{equation*}
 e^{(p)}_A = \Vert {A^{(p)}_{{n^{(p)}}(\delta)}}^\dagger P_{{n^{(p)}}(\delta)} y - y^{(p)} \Vert_{L^2}
    \longrightarrow \infty \ (\delta \to 0^+).
  \end{equation*}
  In addition with estimate on noise error $ e^{(p)}_N \leq C^{(p)} n^{(p)}(\delta) \to \infty \ (\delta \to 0^+) $ (by (4.10)), one could see the invalidness of single regularization parameter choice since only adjusting parameter choice $ n = n^{(p)}(\delta) $ can not gives $ e^{(p)}_T \to 0 \ (\delta \to 0^+) $.
  \end{itemize}
  The following two sections will answer above two questions respectively.
\section{Total Error Estimate for $ y \in {\mathcal{H}}^p_0(0,2\pi) $ and Parameter Choice for Regularization}
To solve the first question, we introduce regularization in the following procedure:
\newline \indent Combining Corollary 5.1 with Remark 4.2, it gives
\begin{theorem}
Set $  {A^{(p)}_n} $ as (3.3), $ y \in {\mathcal{H}}^p_0(0,2\pi) $. Then
\begin{equation}
\Vert {A^{(p)}_n}^\dagger P_n y^\delta - y^{(p)}\Vert_{L^2(0,2\pi)}
\leq C^{(p)} n^p \delta + \Vert (I-P_n) y^{(p)} \Vert_{L^2}.
\end{equation}
Furthermore, with a priori information $ y^{(p)} \in H_{per}^{l}(0,2\pi) $,
\begin{equation}
\Vert {A^{(p)}_n}^\dagger P_n y^\delta - y^{(p)}\Vert_{L^2(0,2\pi)}
\leq C^{(p)} n^p \delta + (\gamma^{(p)}+1)\frac{1}{n^{l}} \Vert y^{(p)} \Vert_{H_{per}^{l}}.
\end{equation}
\end{theorem}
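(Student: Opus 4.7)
The plan is to decompose the total error by the triangle inequality into the noise error $e_N^{(p)}$ and approximation error $e_A^{(p)}$ introduced in Section 3.2, and then bound each piece by invoking results already proved in Sections 4 and 5.

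First, for the noise error, I would write
\begin{equation*}
{A^{(p)}_n}^\dagger P_n y^\delta - {A^{(p)}_n}^\dagger P_n y = {A^{(p)}_n}^\dagger P_n (y^\delta - y),
\end{equation*}
and observe that $P_n(y^\delta-y) \in X_n$. Since the operator norm inequality in Remark 4.2 gives $\Vert {A^{(p)}_n}^\dagger\Vert_{X_n \to X_n} \leq C^{(p)} n^p$, and orthogonal projection is a contraction on $L^2$, this produces
\begin{equation*}
e_N^{(p)} \leq C^{(p)} n^p \Vert P_n(y^\delta-y)\Vert_{L^2} \leq C^{(p)} n^p \Vert y^\delta - y\Vert_{L^2} \leq C^{(p)} n^p \delta.
\end{equation*}

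Second, for the approximation error, Corollary 5.1 directly gives
\begin{equation*}
e_A^{(p)} = \Vert {A^{(p)}_n}^\dagger P_n y - y^{(p)}\Vert_{L^2} \leq (\gamma^{(p)}+1)\Vert (I-P_n) y^{(p)}\Vert_{L^2},
\end{equation*}
whenever $y \in \mathcal{H}^p_0(0,2\pi)$. Summing the two bounds via $e_T^{(p)} \leq e_N^{(p)} + e_A^{(p)}$ yields (6.1).

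For the refined estimate (6.2), I would then feed the a priori smoothness $y^{(p)} \in H^l_{per}(0,2\pi)$ into the higher-order projection estimate of Lemma 2.5, which replaces $\Vert (I-P_n) y^{(p)}\Vert_{L^2}$ by $n^{-l}\Vert y^{(p)}\Vert_{H^l_{per}}$. Substituting into the bound from (6.1) gives (6.2) at once.

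Since every ingredient is already established, I do not anticipate any genuine obstacle; the theorem is essentially an assembly step. The only delicate point is being careful that the operator norm from Remark 4.2 is only asserted on $X_n$, which is why the reduction $P_n(y^\delta-y) \in X_n$ must be made explicit before applying it.
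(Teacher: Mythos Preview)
Your proposal is correct and follows exactly the paper's own argument: the paper simply states ``Combining Corollary 5.1 with Remark 4.2'' as the proof, which is precisely the decomposition $e_T^{(p)} \leq e_N^{(p)} + e_A^{(p)}$ you spell out. One minor remark: the bound you actually obtain for the approximation error carries the factor $(\gamma^{(p)}+1)$ from Corollary 5.1, so what you prove is $C^{(p)} n^p \delta + (\gamma^{(p)}+1)\Vert (I-P_n) y^{(p)}\Vert_{L^2}$; the absence of this factor in (6.1) as printed is evidently a typo in the paper (compare the version stated in the Introduction and the form of (6.2)), not a gap in your argument.
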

\begin{remark}
In the case that $ y \in {\mathcal{H}}^p_0(0,2\pi) $ is provided but no a priori information on exact solution $ y^{(p)} $,
we determine parameter choice strategy from (6.1) as
\begin{equation}
 n_1^{(p)} := n_1^{(p)} (\delta) = \kappa \delta^{a- \frac{1}{p}},
\end{equation}
where $ a \in (0, \frac{1}{p}) $ is optional. However, we specify that, in this case, the convergence rate can not be obtained higher than $ O(1) $.
\newline \indent In the case that $ y \in {\mathcal{H}}^p_0(0,2\pi) $ and $ y^{(p)} \in H^l_{per} (0,2\pi) $,
we could determine parameter choice strategy from (6.2) as
\begin{equation}
n_2^{(p)} = n_2^{(p)}(\delta) = (\frac{l (\gamma^{(p)}+1) \Vert y^{(p)} \Vert_{H_{per}^l}}{p C^{(p)}})^{\frac{1}{l+p}} \delta^{-\frac{1}{l+p}}.
\end{equation}
Hence it follows that
\begin{equation}
\Vert {A^{(p)}_{n_2^{(p)}(\delta)}}^\dagger P_{n_2^{(p)}(\delta)} y^\delta - y^{(p)} \Vert_{L^2(0,2\pi)}
\leq  \Gamma_p \Vert y^{(p)} \Vert^{\frac{p}{l+p}}_{H_{per}^l} \delta^{\frac{l}{l+p}},
\end{equation}
where
\begin{equation*}
\Gamma_p := ((\frac{l}{p})^{\frac{p}{l+p}}+(\frac{l}{p})^{\frac{-1}{l+p}}) (C^{(p)})^{\frac{l}{l+p}}(\gamma^{(p)}+1)^{\frac{p}{l+p}}.
\end{equation*}
\end{remark}
\begin{remark}
Assume that
\begin{equation}
 y^{(p)} \in H_{per}^{2 \mu p } (0,2\pi) \ (\mu = \frac{1}{2} \ \textrm{or}  \ 1).
 \end{equation}
 Choosing $  n_3^{(p)}=n_3^{(p)} (\delta)=\delta^{-\frac{1}{(2\mu + 1 )p}} $, we gain the optimal convergence rate from (6.5), that is,
\begin{equation*}
 \Vert {A^{(p)}_{n_3^{(p)}(\delta)}}^\dagger P_{n_3^{(p)}(\delta)} y^\delta - y^{(p)} \Vert = O( \delta^{\frac{2 \mu}{2\mu+1}}) .
 \end{equation*}
Here we specify that (6.6) is a slightly variant version of the standard source condition stated in [10], that is, $  y^{(p)} \in \mathcal{R}({A^{(p)}}^* A^{(p)})^{\mu} \ ( \mu = \frac{1}{2} \ \textrm{or}  \ 1 ) $, where
\begin{equation*}
\mathcal{R}({A^{(p)}}^* A^{(p)})^{\frac{1}{2}}= \mathcal{R}({A^{(p)}}^*) = \mathcal{H}^p_{2\pi} (0,2\pi), \
\mathcal{R}({A^{(p)}}^* A^{(p)}) = {\dot \mathcal{H}}^{2p} (0,2\pi).
\end{equation*}
Notice that
\begin{equation*}
Codim_{H^p} \mathcal{H}^p_{2\pi} (0,2\pi) = Codim_{H^p}  H_{per}^{p} (0,2\pi) = p
 \end{equation*}
 and
 \begin{equation*}
 Codim_{H^{2p}}  {\dot \mathcal{H}}^{2p} (0,2\pi) = Codim_{H^{2p}}  H_{per}^{2p} (0,2\pi) = 2p.
\end{equation*}
\end{remark}
\begin{remark}
Assume that noise level $ \delta $ range in any closed interval $ [\delta_0, \delta_1] \subseteq (0,+\infty) $.
 If $ y \in {\mathcal{H}}^p_0(0,2\pi) $ and $ y^{(p)} \in H_{per}^\infty(0,2\pi) $ such that $ \lim_{l \to \infty} \Vert y^{(p)} \Vert^{\frac{1}{l+p}}_{H_{per}^l} $ exists, then the regularization parameter is determined by (6.4) as
 \begin{equation}
 n^{(p)} = \lim_{l \to \infty} \Vert y^{(p)} \Vert^{\frac{1}{l+p}}_{H_{per}^l},
\end{equation}
which only depends on exact derivative $ y^{(p)} $, not concerned with noise level $ \delta $. Furthermore,
in the case that $ y \in \mathcal{H}^p_0 (0,2\pi)$ and
\begin{equation}
y^{(p)} = a_0 + \sum^{N_1}_{k=1} b_k \cos k t + \sum^{N_2}_{k=1} c_k \sin k t  \in H_{per}^\infty(0,2\pi),
\end{equation}
where $ b_{N_1}, \ c_{N_2} \neq 0 $. The regularization parameter is determined by (6.7) as
 \begin{equation}
 n^{(p)} = \max(N_1,N_2),
\end{equation}
which only depends on the highest frequency of trigonometric polynomial in (6.8), not concerned with noise level $ \delta $.
\end{remark}
\section{Extended Numerical Differentiation on $ H^p(0,2\pi) $}
\subsection{Extended result with exact measurements at endpoint $ x = 0 $ }
Theorem 6.1 provides a result of stable numerical differentiation
on $ y \in  {\mathcal{H}}^p_0(0,2\pi) $, where
\begin{equation*}
{\mathcal{H}}^p_0(0,2\pi) :=\{ y \in H^p(0,2\pi), y(0)=\cdots = y^{(p-1)}(0) = 0 \}.
\end{equation*}
We consider to remove the restriction on initial value data, and extend the result into case $ y \in H^p(0,2\pi) $.
\newline \indent Observing that, for $ y \in H^p(0,2\pi) $,
\begin{equation*}
 y(x)-\sum^{p-1}_{k=0} \frac{y^{(k)}(0)}{k!} x^k \in {\mathcal{H}}^p_0(0,2\pi),
\end{equation*}
we naturally adjust regularized scheme (1.3) into
\begin{equation*}
{A^{(p)}_n}^\dagger P_n (y^\delta-\sum^{p-1}_{k=0} \frac{y^{(k)}(0)}{k!} x^k).
\end{equation*}
Now, given exact measurements on the initial value data,
\begin{equation*}
 y(0), y'(0), \cdots, y^{(p-1)} (0).
 \end{equation*}
we can adjust Theorem 6.1 into the following version.
\begin{theorem}
Set $  {A^{(p)}_n} $ as (3.3), $ y \in H^p(0,2\pi) $. Then
\begin{equation*}
\Vert {A^{(p)}_n}^\dagger P_n (y^\delta-\sum^{p-1}_{k=0} \frac{y^{(k)}(0)}{k!} x^k) - y^{(p)}\Vert_{L^2} \leq C^{(p)} n^p \delta + (\gamma^{(p)}+1) \Vert (I-P_n) y^{(p)} \Vert_{L^2}.
\end{equation*}
Furthermore, with a priori information $ y^{(p)} \in H_{per}^{l}(0,2\pi) $,
\begin{equation*}
\Vert {A^{(p)}_n}^\dagger P_n (y^\delta-\sum^{p-1}_{k=0} \frac{y^{(k)}(0)}{k!} x^k) - y^{(p)}\Vert_{L^2} \leq C^{(p)} n^p \delta + (\gamma^{(p)}+1)\frac{1}{n^{l}} \Vert y^{(p)} \Vert_{H_{per}^{l}}.
\end{equation*}
\end{theorem}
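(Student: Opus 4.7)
The plan is to reduce Theorem 7.1 directly to Theorem 6.1 via the Taylor polynomial trick: subtract off a deterministic polynomial that kills the initial-value data without disturbing either the $p$-th derivative or the noise bound.

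First, I would set $\bar y := y - \sum_{k=0}^{p-1}\frac{y^{(k)}(0)}{k!}x^k$ and verify the two key algebraic facts. A direct computation of $\bar y^{(j)}(0)$ for $j=0,1,\ldots,p-1$ (only the term $k=j$ survives, and it contributes exactly $y^{(j)}(0)$) yields $\bar y^{(j)}(0)=0$, so $\bar y \in \mathcal{H}^p_0(0,2\pi)$. Moreover, since the subtracted polynomial has degree $\le p-1$, its $p$-th derivative is identically zero, giving $\bar y^{(p)} = y^{(p)}$ in $L^2(0,2\pi)$.

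Next, I would set $\bar y^\delta := y^\delta - \sum_{k=0}^{p-1}\frac{y^{(k)}(0)}{k!}x^k$. Because the correction polynomial is built from the given exact initial measurements and appears identically in both $\bar y$ and $\bar y^\delta$, it cancels in the difference, yielding $\Vert \bar y^\delta - \bar y\Vert_{L^2} = \Vert y^\delta - y\Vert_{L^2}\le \delta$. Hence the pair $(\bar y,\bar y^\delta)$ satisfies exactly the hypotheses of the preceding section, with $\bar y \in \mathcal{H}^p_0$ and noise level still $\delta$.

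Then I would invoke Theorem 6.1 applied to $\bar y$ and $\bar y^\delta$ to obtain
\begin{equation*}
\Vert {A^{(p)}_n}^\dagger P_n \bar y^\delta - \bar y^{(p)}\Vert_{L^2} \le C^{(p)} n^p \delta + (\gamma^{(p)}+1)\Vert (I-P_n)\bar y^{(p)}\Vert_{L^2},
\end{equation*}
where the noise component uses Remark 4.2 and the approximation component uses Corollary 5.1. Substituting $\bar y^{(p)} = y^{(p)}$ and unfolding $\bar y^\delta$ gives the first claimed inequality. Finally, under the a priori condition $y^{(p)} \in H^l_{per}(0,2\pi)$, Lemma 2.5 bounds $\Vert (I-P_n)y^{(p)}\Vert_{L^2}\le n^{-l}\Vert y^{(p)}\Vert_{H^l_{per}}$, yielding the second inequality.

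There is no serious obstacle here: the whole argument is a change of variables by a deterministic, known polynomial. The only subtle point is confirming that this subtraction is innocuous on both sides simultaneously, namely that it preserves the $L^2$ noise bound while moving $y$ into $\mathcal{H}^p_0$ and leaving $y^{(p)}$ unchanged, which is what makes the reduction to Theorem 6.1 legitimate.
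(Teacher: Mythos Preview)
Your proposal is correct and matches the paper's approach exactly: the paper treats Theorem 7.1 as an immediate adjustment of Theorem 6.1, obtained by the very Taylor-polynomial subtraction it records just before the statement (so that $\bar y\in\mathcal{H}^p_0$, $\bar y^{(p)}=y^{(p)}$, and the noise bound is unchanged). Your write-up is in fact more explicit than the paper, which does not spell out a separate proof for Theorem 7.1.
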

\subsection{Extended result with noisy measurements at endpoint $ x = 0 $}
However, in practical cases, one can not obtain initial value data
 $ y(0), y'(0),y''(0) $ exactly. Instead, one could only obtain a cluster of noisy data,
 denoted as $ \Lambda_0 (0), \Lambda_1 (0),\Lambda_2 (0) $ respectively. Now
provided with above endpoint measurement, we reformulate the problem
of $ p $ order numerical differentiation as:
\begin{problem}
 Assume that we have
\begin{itemize}
\item $ y \in H^p(0,2\pi) $ and $ y^\delta $ measured on $ ( 0, 2\pi ) $,
which belongs to $ L^2(0,2\pi)$ such that $ \Vert y^\delta - y \Vert_{L^2} \leq \delta $,
\item Noisy initial value data $ \Lambda_0 (0), \  \cdots, \Lambda_{p-1} (0) $
for $ y(0), \cdots, y^{(p-1)}(0) $ respectively, which satisfies that
\begin{equation*}
 \vert \Lambda_k (0) - y^{(k)}(0) \vert \leq \delta_i , \ k= 0,\cdots, p-1.
 \end{equation*}
 \end{itemize}
 How to gain stable approximation to $ y^{(p)} $?
 \end{problem}
An estimate similar to (6.2) is constructed to answer this question:
\begin{theorem}
Set $  {A^{(p)}_n} $ as (3.3), $ y \in H^p(0,2\pi) $ and $ y^{(p)} \in H_{per}^{l}(0,2\pi) $. Then
\begin{equation*}
\Vert {A^{(p)}_n}^\dagger P_n (y^\delta-(\sum^{p-1}_{k=0} \frac{\Lambda_k(0)}{k!} x^k )) - y^{(p)}\Vert_{L^2}
\end{equation*}
\begin{equation*}
\leq
C^{(p)} n^p \delta +\Delta_p C^{(p)} n^p \delta_i + (\gamma^{(p)}+1)\frac{1}{n^{l}} \Vert y^{(p)} \Vert_{H_{per}^{l}}
\end{equation*}
For convenience of notations, we set $ \delta_i = \delta $, then it follows that
\begin{equation*}
\Vert {A^{(p)}_n}^\dagger P_n (y^\delta-(\sum^{p-1}_{k=0} \frac{\Lambda_k(0)}{k!} x^k )) - y^{(p)}\Vert_{L^2}
\end{equation*}
\begin{equation*}
\leq
C_{\Delta}^{(p)} n^p \delta  + (\gamma^{(p)}+1)\frac{1}{n^{l}} \Vert y^{(p)} \Vert_{H_{per}^{l}},
\end{equation*}
where $ C_{\Delta}^{(p)} := (\Delta_p +1 ) C^{(p)}  $ and $ \Delta_p := \sum\limits^{p-1}_{k=0} \frac{ \Vert  x^k \Vert_{L^2} }{k!} $.
\end{theorem}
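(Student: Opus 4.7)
The plan is to write the error $\|A_n^{(p)\dagger}P_n(y^\delta - \sum_{k=0}^{p-1}\frac{\Lambda_k(0)}{k!}x^k) - y^{(p)}\|_{L^2}$ as a perturbation of the quantity already estimated in Theorem 7.1. The natural splitting is
\begin{equation*}
y^\delta - \sum_{k=0}^{p-1}\frac{\Lambda_k(0)}{k!}x^k = \Bigl(y^\delta - \sum_{k=0}^{p-1}\frac{y^{(k)}(0)}{k!}x^k\Bigr) + \sum_{k=0}^{p-1}\frac{y^{(k)}(0)-\Lambda_k(0)}{k!}x^k,
\end{equation*}
so, applying the linear operator $A_n^{(p)\dagger}P_n$ and then the triangle inequality, the total error splits into (i) the error from Theorem 7.1, which contributes $C^{(p)}n^p\delta + (\gamma^{(p)}+1)n^{-l}\|y^{(p)}\|_{H_{per}^l}$, and (ii) a new ``initial-data noise'' term
\begin{equation*}
E_{\mathrm{init}} := \Bigl\|A_n^{(p)\dagger}P_n\Bigl(\sum_{k=0}^{p-1}\frac{y^{(k)}(0)-\Lambda_k(0)}{k!}x^k\Bigr)\Bigr\|_{L^2}.
\end{equation*}

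To bound $E_{\mathrm{init}}$ I would chain three elementary inequalities. First, $\|A_n^{(p)\dagger}P_n v\|_{L^2} \le \|A_n^{(p)\dagger}\|_{X_n\to X_n}\|P_n v\|_{L^2}\le C^{(p)}n^p\|v\|_{L^2}$ by Remark 4.2 together with $\|P_n\|_{L^2\to L^2}\le 1$. Second, the triangle inequality in $L^2$ yields
\begin{equation*}
\Bigl\|\sum_{k=0}^{p-1}\frac{y^{(k)}(0)-\Lambda_k(0)}{k!}x^k\Bigr\|_{L^2} \le \sum_{k=0}^{p-1}\frac{|y^{(k)}(0)-\Lambda_k(0)|}{k!}\|x^k\|_{L^2}.
\end{equation*}
Third, the initial-data noise hypothesis $|y^{(k)}(0)-\Lambda_k(0)|\le\delta_i$ turns the right side into $\delta_i \sum_{k=0}^{p-1}\|x^k\|_{L^2}/k! = \Delta_p \delta_i$. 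Combining gives $E_{\mathrm{init}}\le \Delta_p C^{(p)} n^p \delta_i$, which is exactly the extra middle term in the claimed estimate.

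Adding (i) and (ii) yields the first displayed inequality of the theorem. The second displayed inequality, in which $\delta_i$ is renamed $\delta$, is then immediate: the two noise-dependent terms $C^{(p)}n^p\delta$ and $\Delta_p C^{(p)} n^p\delta$ collapse into $(\Delta_p+1)C^{(p)}n^p\delta = C^{(p)}_\Delta n^p\delta$, by the definition of $C^{(p)}_\Delta$.

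I expect no genuine obstacle here; the result is essentially a stability perturbation of Theorem 7.1 under linear transport of the $p-1$ pieces of initial-value data through the bounded operator $A_n^{(p)\dagger}P_n$. The only point requiring a little care is to make sure the operator-norm bound from Remark 4.2, stated on $X_n$, is applied \emph{after} the projection $P_n$ has been inserted, so that one genuinely controls the $L^2$-norm of the input and not some stronger norm. Once that is observed, the rest is purely arithmetic.
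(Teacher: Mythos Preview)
Your proposal is correct and mirrors the paper's own proof essentially step for step: the paper splits the total error into $e'_T$ (your part (i), handled by Theorem 7.1 after noting that $(y-\sum_{k=0}^{p-1}\frac{y^{(k)}(0)}{k!}x^k)^{(p)}=y^{(p)}$) and $e'_P$ (your $E_{\mathrm{init}}$, bounded via $\|A_n^{(p)\dagger}\|\,\|P_n\|$ and the initial-data noise hypothesis). There is nothing to add.
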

\begin{remark}
In this case, it is necessary to specify that the parameter choice strategy should be adjusted from (6.4) to the following,
\begin{equation}
n^{(p)} = n^{(p)}(\delta) = (\frac{l (\gamma^{(p)}+1) \Vert y^{(p)} \Vert_{H_{per}^l}}{p C_{\Delta}^{(p)}})^{\frac{1}{l+p}} \delta^{-\frac{1}{l+p}}.
\end{equation}
Also, assume that noise level $ \delta $ range in the interval $ (\delta_0, \delta_1) $, where $ 0 < \delta_0 <<1 $ and $ \delta_1 < +\infty $.  When $ y \in H^p(0,2\pi) $ and $ y^{(p)} \in H_{per}^\infty $ such that $ \lim_{l \to \infty} \Vert y^{(p)} \Vert^{\frac{1}{l+p}}_{H_{per}^l} $ exists, the regularization parameter is determined by (7.1) as
 \begin{equation}
 n^{(p)} = \lim_{l \to \infty} \Vert y^{(p)} \Vert^{\frac{1}{l+p}}_{H^l},
\end{equation}
which remains not concerned with noise level $ \delta$, also not concerned with the additional noise level $ \delta_i $
of initial value data. Besides, in the case that
\begin{equation*}
y = \sum^{p-1}_{k=0} a_k x^k + \sum^{N_1}_{k=1} b_k \cos k t + \sum^{N_2}_{k=1} c_k \sin k t,
\end{equation*}
where $ b_{N_1}, \ c_{N_2} \neq 0 $. The optimal parameter choice is determined by (7.2) as
 \begin{equation}
 n^{(p)} = \max(N_1,N_2),
\end{equation}
which is just the same as (6.9),  still not concerned with noise level $ \delta $  and additional noise in initial value data.
\end{remark}
\begin{remark}
The optimal convergence rate $ O(\delta^{\frac{2\mu}{2\mu+1}}) $ can be achieved in the same way as Remark 6.2.
\end{remark}
\begin{proof}
 For $ y \in H^p(0,2\pi) $ and
$ y^\delta \in L^2[0,2\pi] $ with $ \Vert y^\delta -y \Vert \leq \delta $,
\begin{equation*}
 \Vert {A^{(p)}_n}^\dagger P_n (y^\delta-(\sum^{p-1}_{k=0} \frac{\Lambda_k(0)}{k!} x^k )) - y^{(p)}\Vert_{L^2}
\leq e'_T + e'_P,
 \end{equation*}
 where
\begin{equation*}
e'_T:= \Vert {A^{(p)}_n}^\dagger P_n ( y^\delta-\sum^{p-1}_{k=0} \frac{y^{(k)}(0)}{k!} x^k) - (y-\sum^{p-1}_{k=0} \frac{y^{(k)}(0)}{k!} x^k)^{(p)}\Vert_{L^2},
\end{equation*}
\begin{equation*}
e'_P:= \Vert {A^{(p)}_n}^\dagger P_n (\sum^{p-1}_{k=0} \frac{y^{(k)}(0)}{k!} x^k-\sum^{p-1}_{k=0} \frac{\Lambda_k(0)}{k!} x^k)\Vert_{L^2}.
\end{equation*}
Apply Theorem 7.1, and it follows that
\begin{equation*}
e'_T
\leq C^{(p)} n^p \delta + (\gamma^{(p)}+1)\frac{1}{n^{l}} \Vert y^{(p )} \Vert_{H_{per}^{l}}.
\end{equation*}
Besides,
\begin{equation*}
e'_P
 \leq \Vert {A^{(p)}_n}^\dagger \Vert \Vert  P_n \Vert \sum^{p-1}_{k=0} \frac{\delta_e}{k!} \Vert  x^k\Vert_{L^2} \leq \Delta_p C^{(p)} n^p \delta_i,
\end{equation*}
where $ \Delta_p := \sum\limits^{p-1}_{k=0} \frac{ \Vert  x^k \Vert_{L^2} }{k!} $. Then we have
\begin{equation*}
 \Vert {A^{(p)}_n}^\dagger P_n (y^\delta-(\sum^{p-1}_{k=0} \frac{\Lambda_k(0)}{k!} x^k )) - y^{(p)}\Vert_{L^2}
 \end{equation*}
\begin{equation*}
\leq C^{(p)} n^p \delta +\Delta_p C^{(p)} n^p \delta_i + (\gamma^{(p)}+1)\frac{1}{n^{l}} \Vert y^{(p )} \Vert_{H_{per}^{l}}.
\end{equation*}
\end{proof}
\section{Numerical Experiments}
All experiments are performed in Intel(R) Core(TM) i7-7500U CPU @2.70GHZ 2.90 GHZ Matlab R 2017a.
For all experiments, the regularized solution is given by
 \begin{equation*}
 \varphi^{p, \delta, \delta_i}_n := {A^{(p)}_n}^\dagger P_n (y^\delta-\Lambda^{(p)}(x)),
 \end{equation*}
with regularization parameter choice $ n = n^{(p)} = n^{(p)} (\delta, \delta_i) (p=1,2,3)$, where
\begin{equation*}
\delta y =\delta \frac{\sin k x }{\sqrt{\pi}}, \ y^\delta (x) = y(x) + \delta y.
\end{equation*}
and
\begin{equation*}
\Lambda^{(p)}(x) = \sum^{p-1}_{k=0}\frac{\Lambda_k(0)}{k!} x^k \ \textrm{with} \ \Lambda_k (0) = y^{(k)}(0) + \delta_i, \ k \in \overline{0,1,\cdots, p-1}.
\end{equation*}
All experiments are divided into two cases:
 \begin{itemize}
 \item \textbf{Case I}: $ \delta \neq 0, \delta_i = 0 $, that is, high frequency noise $ \delta y $ and exact initial value data.
  \item \textbf{Case II}: $ \delta = \delta_i \neq 0 $, that is, high frequency noise $ \delta y $ and noisy initial value data.
      \end{itemize}
\indent The following index is introduced to measure the computational accuracy in tests:
\begin{itemize}
\item Relative error
\begin{equation*}
 r=\frac{\Vert \varphi^{p, \delta, \delta_i}_{n^{(p)}(\delta, \delta_i)} - y^{(p)} \Vert_{L^2} }{ \Vert y^{(p)} \Vert_{L^2} }.
 \end{equation*}
\end{itemize}
\subsection{On smooth functions}
\begin{example}
Set
\begin{equation*}
p(x) = \sum^6_{k=1} \frac{1}{k^2} \sin(kx), \ q_i(x) = \sum^{p-1}_{i=0} 1*x^i, \ p=1,2,3
\end{equation*}
\begin{equation*}
y_i (x) = p(x) + q_i(x), \ y_i^\delta (x) = y_i (x) + \delta \frac{\sin 12 x}{\sqrt{\pi}}.
\end{equation*}
We use the $ y^\delta_i $ as test function for $ i $ order numerical differentiation. Notice that
\begin{equation*}
y_1' (x) = \sum^6_{k=1} \frac{1}{k} \cos(kx), \ y_2''(x) =  - \sum^6_{k=1} \sin(kx) ,
\end{equation*}
\begin{equation*}
 y_3'''(x) = - \sum^6_{k=1} k \cos(kx) .
\end{equation*}
\begin{equation*}
y_1 (0) = 1, \ y_2 (0) = 1, \ y'_2 (0) = 1+\sum^6_{k=1} \frac{1}{k}
\end{equation*}
\begin{equation*}
 y_3(0) = 1, \ y'_3 (0) = 1+\sum^6_{k=1} \frac{1}{k}, \  y''_3 (0) = 2.
\end{equation*}
\end{example}

\begin{table}
{\begin{tabular}{cccccccc}
\hline
 & & $ n $ & $ 2 $ & $ 4 $ & $ 6 $ & $ 8 $& 12  \\
\cline{2-8}
 $ p=1 $ & $ \delta_i = 0 $  & $ r $ & 0.4023 & 0.2132 & $1.5194e^{-16}$ & $ 1.5194e^{-16}$  & 0.0554 \\
\cline{2-8}
 & $ \delta_i = 0.01 $ & $ r $ & 0.4024 &  0.2035 & $ 0.0133 $ & $ 0.0152 $ & 0.0532 \\
 \cline{2-8}
 $ p=2 $ & $ \delta_i = 0 $  & $ r $ &  1.0695 &  0.6808 & $ 7.5244 e^{-15}$ & $ 1.1876e^{-14}$ & 0.3666 \\
\cline{2-8}
 & $ \delta_i = 0.01 $ & $ r $ &1.0830 & 0.7046 & $ 0.0732 $ & 0.1051 &0.3046 \\
 \cline{2-8}
 $ p=3 $ & $ \delta_i = 0 $  & $ r $ &1.1879   &  1.3802 &  $ 6.6497e^{-14} $  & $  1.6561e^{-13} $  & 0.0469 \\
\cline{2-8}
 & $ \delta_i = 0.01 $ & $ r $ & 1.1884 &  1.3827 & $ 0.0081 $ &$   0.0150 $ & 0.0355 \\
\hline
\end{tabular}}
\caption{Above experiments  correspond to the three examples in Example 8.1. Case I,II are uniformly set as $ (\delta, \delta_i) = (0.01, 0 ) $ and $ ( \delta, \delta_i ) = ( 0.01, 0.01) $ respectively. Notice that $ r $ denotes the relative error. }
\end{table}

\begin{table}
{\begin{tabular}{cccccccc}
\hline
 & & $ n $ & $ 2 $ & $ 4 $ & $ 6 $ & $ 8 $& 12  \\
\cline{2-8}
 $ p=1 $ & $ \delta_i = 0 $  & $ t $ & 0.3341 & 0.3673 & 0.4178 & 0.4577  & 0.5983 \\
\cline{2-8}
 & $ \delta_i = 0.01 $ & $ t $ & 0.3391 & 0.3728 & 0.4488 & 0.4935 & 0.5748 \\
 \cline{2-8}
 $ p=2 $ & $ \delta_i = 0 $  & $ t $ & 0.4670 & 0.6070 &  0.6385  & 0.8051  & 0.8400 \\
\cline{2-8}
 & $ \delta_i = 0.01 $ & $ t $ & 0.4892  & 0.5641 & 0.6218 &0.7126 & 0.8160 \\
  \cline{2-8}
 $ p=3 $ & $ \delta_i = 0 $  & $ t $ &  0.2755 &  0.3958 & 0.4497 & 0.5024 & 0.6810 \\
\cline{2-8}
 & $ \delta_i = 0.01 $ & $ t $ & 0.4723 & 0.5151 & 0.6348 & 0.7240 & 1.1838 \\
\hline
\end{tabular}}
\caption{$ t $ denotes the CPU time $ (s) $ for the corresponding experiment in Table 1. }
\end{table}
\subsubsection{Unified observation on cases with smooth derivative}
We first investigate into the case  $ (\delta, \delta_i) = ( 0.01, 0 ) $. All data in this case can be divided into three phases  $ \mathcal{V}_1 =\{ 2,4 \} $, $ \mathcal{V}_2 =\{ 6,8 \} $, $ \mathcal{V}_3 =\{ 12 \} $.
\newline \indent We can compare above three phases and quickly find that when $ n \in \mathcal{V}_2 $, the relative error $ r $ is the least and almost approaches $ 0 $. This displays the good filtering effect of algorithm on specific class of functions (the sum of trigonometric polynomial and polynomial of order less than order $ p $, where $ p $ denotes the order of numerical differentiation), and also correspond to the fact indicated by (6.9): the best regularization parameter $ n = 6 $.
\newline \indent Now we explain the source of the good filtering effect in case $ p = 2 $, the other cases are similar.
\begin{itemize}
\item The exact measurement of initial value data help give a precise Taylor polynomial truncation to eliminate the polynomial term $ q_2(x) $ in $ y^{\delta}_2 $, thus the computational scheme is transformed into $ {A^{(2)}_n}^\dagger P_n (\bar p(x) + \delta y ) $, where $ \bar p(x) = p(x) - \sum^6_{k=1} \frac{1}{k} x $.
\item The parameter choice $ n =6 $ appropriately eliminate the noise component $ \delta y $, now $ {A^{(2)}_6}^\dagger P_6 (\bar p(x) + \delta y ) = {A^{(2)}_6}^\dagger P_6 \bar p(x) $. Notice that $ \bar p(x) \in \mathcal{R}(A^{(2)}) $, $ {A^{(2)}}^\dagger \bar p(x) = y''_2 $,
    \begin{equation*}
    A^{(2)}_6 y''_2 = P_6 A^{(2)} P_6 y''_2 = P_6 A^{(2)} y''_2
    \end{equation*}
    \begin{equation*}
    = P_6 A^{(2)} {A^{(2)}}^\dagger \bar p(x)  = P_6 P_{\overline{\mathcal{R}(A^{(2)})}} \bar p(x) = P_6 \bar p(x) \ \textrm{by} \ (2.1).
    \end{equation*}
Then the uniqueness of Galerkin system in Theorem 4.1 gives that $ {A^{(2)}_6}^\dagger P_6 \bar p(x) = y''_2 $, that is, approximate solution strictly equals to the exact solution in this case, the good filtering effect appears.
\end{itemize}
The deviation of the accuracy in $ \mathcal{V}_1$ and $ \mathcal{V}_3 $ can also be explained in the same way. The former case of choice $ n = 2 $ with lower accuracy is due to that $ P_n y^\delta_2, \ n =2 $ does not cover the major part of $ p(x) $, but with the increase of $ n \in \mathcal{V}_1 $, the coverage increases and hence the accuracy improves. As to the latter case $ \mathcal{V}_3 $, now the $ \delta y = \frac{\sin 12x }{\sqrt{\pi}} $ come into the computation, thus the good filtering effect disappears.
\newline \indent For case II with noise pair $ (\delta, \delta_i) = (0.01, 0.01) $, it can be seen in Table 1 that, when regularization parameter $ n \in \mathcal{V}_2 $, the corresponding relative error $ r $ all approach or exceed 0.01 uniformly. Now, compared to the case I with the same choice for regularization parameter, the good filtering effect of case I are strongly weakened. This is because the noise $ \delta_i $ in initial value data bring a not complete Taylor polynomial truncation and hence parameter $ n = 6 $ can not eliminate the lower-frequency noise components in
\begin{equation*}
 \Lambda^{(p)}(x) - \sum^{p-1}_{k=0} \frac{y^{(k)}(0)}{k!}x^k.
 \end{equation*}
 \subsection{On periodic weakly differentiable derivatives}
 In this subsection, we mainly investigate the effectiveness of parameter choice (6.4) and (7.1) for general case (compared with the case in subsection 8.1).
 \subsubsection{First order}
\begin{example}
Set
\begin{equation*}
 y(x) = \left\{
  \begin{array}{rcl}
   \pi x - \frac{1}{2}x^2 , \ 0 \leq x < \pi, \\
\frac{1}{2} x^2 - \pi x + \pi^2 , \pi \leq x < 2\pi .
\end{array} \right.
\end{equation*}
\begin{equation*}
y(0) = 0,
\end{equation*}
\begin{equation*}
y^\delta (x) = y(x) + \delta \frac{\sin  k x}{\sqrt{\pi}},
\end{equation*}
\begin{equation*} y' (x) = \left\{
\begin{array}{rcl}
 \pi - x , \ 0 \leq x < \pi, \\
x- \pi , \pi  \leq x < 2\pi .
\end{array} \in H^1_{per}(0,2\pi) \right  .
\end{equation*}
\end{example}
 \subsubsection{Second order}
\begin{example}
Set
\begin{equation*}
 y(x) = \left\{
  \begin{array}{rcl}
   \frac{1}{2} \pi x^2 - \frac{1}{6} x^3 , \ 0 \leq x < \pi, \\
\frac{1}{6} x^3 - \frac{1}{2} \pi x^2 + \pi^2 x - \frac{1}{3} \pi^3 , \pi \leq x < 2\pi .
\end{array} \right.
\end{equation*}
\begin{equation*}
y(0) = 0, \ y'(0) = 0
\end{equation*}
\begin{equation*}
y^\delta (x) = y(x) + \delta \frac{\sin  k x}{\sqrt{\pi}},
\end{equation*}
\begin{equation*} y'' (x) = \left\{
\begin{array}{rcl}
 \pi - x , \ 0 \leq x < \pi, \\
x- \pi , \pi  \leq x < 2\pi .
\end{array} \in H^1_{per}(0,2\pi) \right  .
\end{equation*}
\end{example}
 \subsubsection{Third order}
\begin{example}
Set
\begin{equation*}
 y(x) = \left\{
  \begin{array}{rcl}
   \frac{1}{6} \pi x^3 - \frac{1}{24} x^4 , \ 0 \leq x < \pi, \\
\frac{1}{24} x^4 - \frac{1}{6} \pi x^3 + \frac{1}{2} \pi^2 x^2 - \frac{1}{3} \pi^3 x +\frac{1}{12} \pi^4 , \pi \leq x < 2\pi .
\end{array} \right.
\end{equation*}
\begin{equation*}
y(0) = 0, \ y'(0) = 0, \ y''(0) =0;
\end{equation*}
\begin{equation*}
y^\delta (x) = y(x) + \delta \frac{\sin  k x}{\sqrt{\pi}},
\end{equation*}
\begin{equation*} y''' (x) = \left\{
\begin{array}{rcl}
 \pi - x , \ 0 \leq x < \pi, \\
x- \pi , \pi  \leq x < 2\pi .
\end{array} \in H^1_{per}(0,2\pi) \right  .
\end{equation*}
\end{example}

\begin{figure}[htbp]
\centering
 \includegraphics[width=0.8\textwidth]{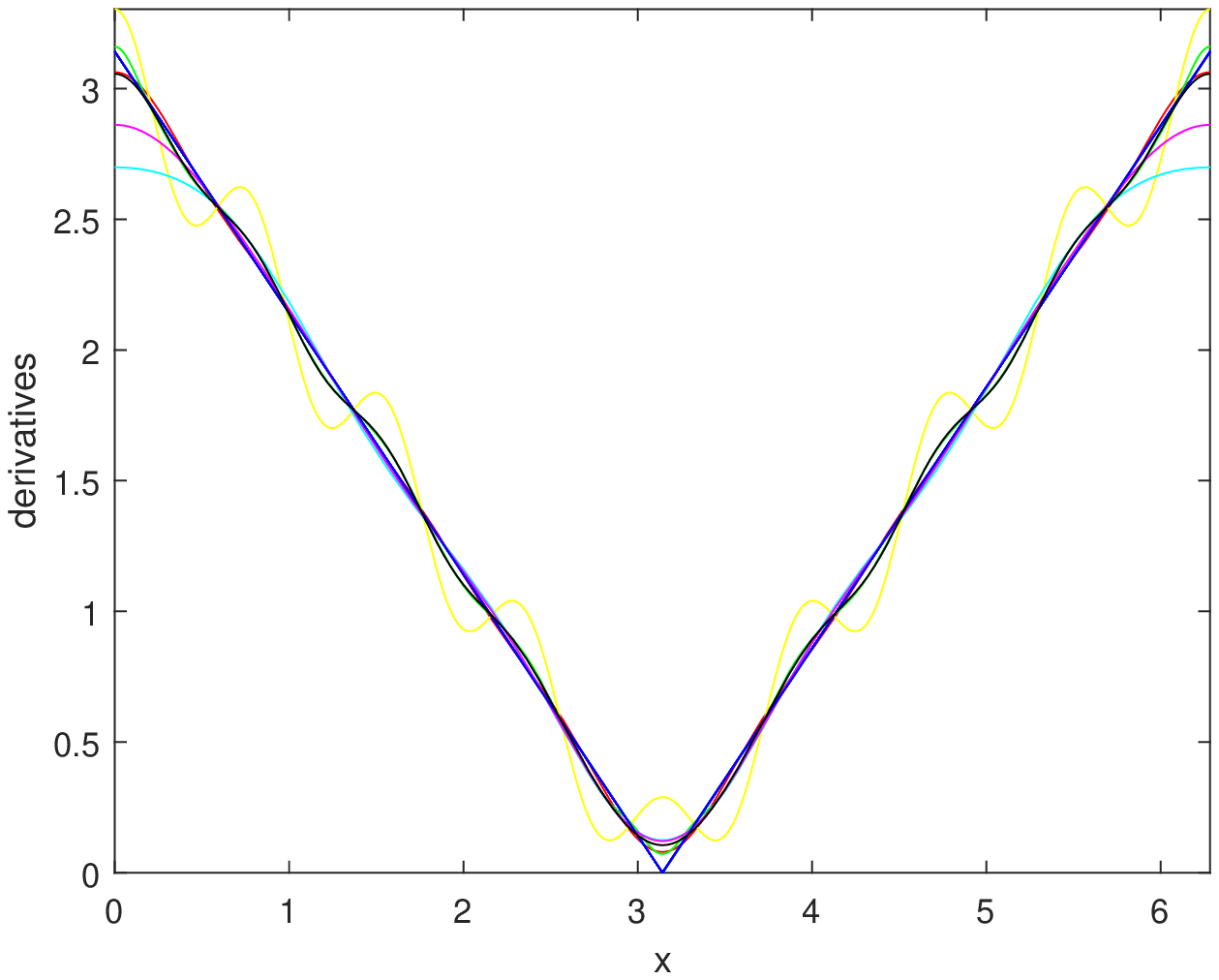}
\caption{The figure corresponds to Example 8.2 where the bule curve denotes the exact derivative, the red, yellow, green curves denote the case $ (\delta, \delta_i, n) = (0.1, 0, 7) $, $ (\delta, \delta_i, n) = (0.05, 0, 10) $ and $ (\delta, \delta_i, n) = (0.01, 0, 23) $  and the lightcyan, manganese purple, black curves denote the case $ (\delta, \delta_i, n) = (0.1, 0.1, 4) $, $ (\delta, \delta_i, n) = (0.05, 0.05, 5) $ and $ (\delta, \delta_i, n) = (0.01, 0.01, 12) $ respectively. }
\end{figure}
\begin{figure}[htbp]
\centering
 \includegraphics[width=0.8\textwidth]{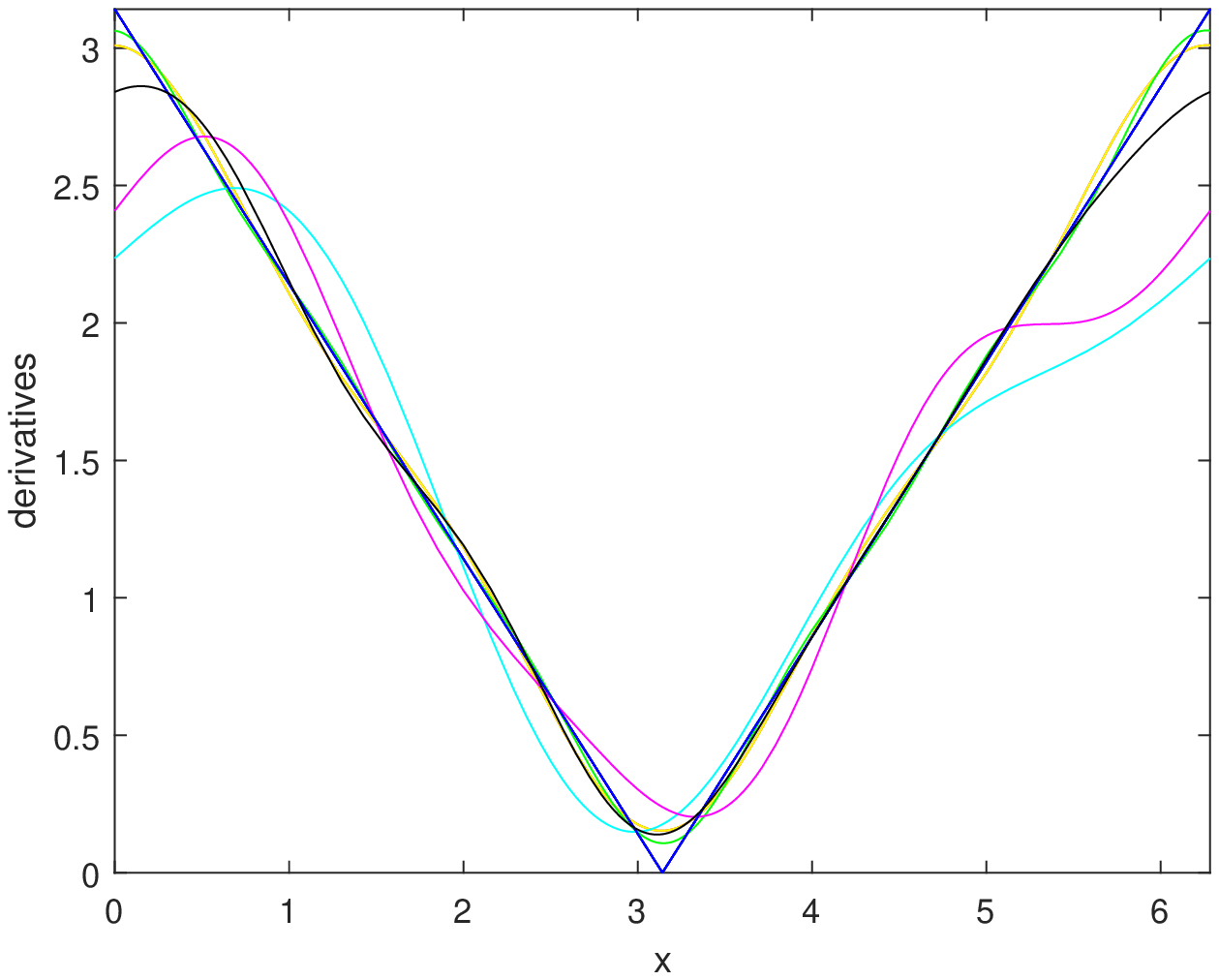}
\caption{The figure corresponds to Example 8.3 where the bule curve denotes the exact derivative, the red, yellow, green curves denote the case $ (\delta, \delta_i, n) = (0.1, 0, 3) $, $ (\delta, \delta_i, n) = (0.05, 0, 3) $ and $ (\delta, \delta_i, n) = (0.01, 0, 6) $  and the lightcyan, manganese purple, black curves denote the case $ (\delta, \delta_i, n) = (0.1, 0.1, 2) $, $ (\delta, \delta_i, n) = (0.05, 0.05, 3) $ and $ (\delta, \delta_i, n) = (0.01, 0.01, 4) $ respectively. }
\end{figure}

\begin{figure}[htbp]
\centering
 \includegraphics[width=0.8\textwidth]{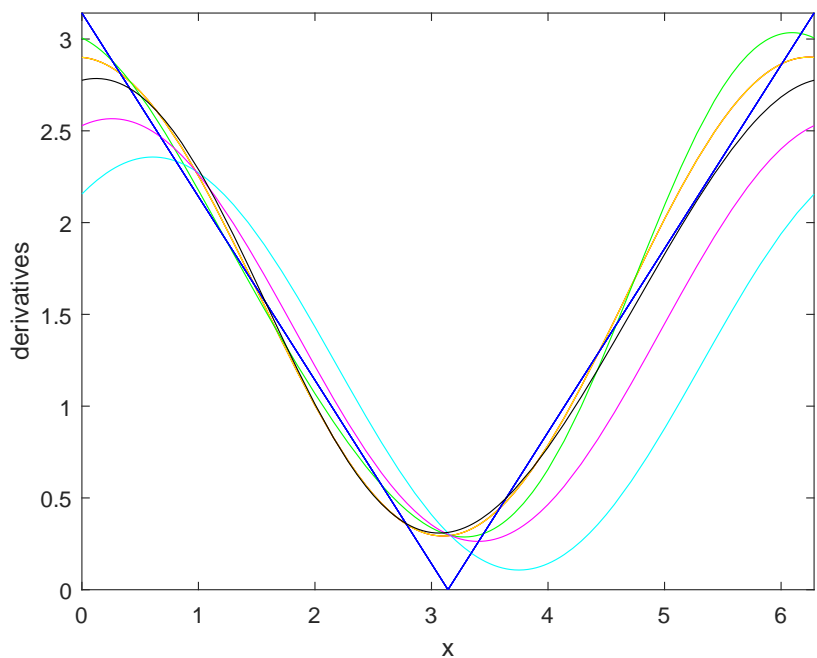}
\caption{The figure corresponds to Example 8.4 where the bule curve denotes the exact derivative, the red, yellow, green curves denote the case $ (\delta, \delta_i, n) = (0.1, 0, 1) $, $ (\delta, \delta_i, n) = (0.05, 0, 1) $ and $ (\delta, \delta_i, n) = (0.01, 0, 2) $  and the lightcyan, manganese purple, black curves denote the case $ (\delta, \delta_i, n) = (0.1, 0.1, 1) $, $ (\delta, \delta_i, n) = (0.05, 0.05, 1) $ and $ (\delta, \delta_i, n) = (0.01, 0.01, 2) $ respectively. }
\end{figure}
\subsubsection{Unified observation on cases with periodic weakly differentiable derivative}
In this subsection, we utilize strategies (6.4) and (7.1) to determine regularization parameter for two cases with different noise level respectively. The figures 1-3 show the good effectiveness of strategy proposed in this paper in a general aspect.
\subsection{On discontinuous derivatives}
 \indent In the following numerical examples with non-periodic discontinuous derivatives, we choose to adjust parameter
 \begin{equation*}
 n^{(p)} = n^{(p)}(\delta, \delta_i), \  p = 1, 2, 3,
 \end{equation*}
 with experiments, not by (6.3) for the uncertainties to determine $ \kappa $. Figures corresponding to least-error $ r $ in numerical differentiation of each order are attached.
\subsubsection{First order}
\begin{example}
Set
\begin{equation*}
 y(x) = \left\{
  \begin{array}{rcl}
   x , \ 0 \leq x < 4, \\
4 , \ 4 \leq x < 6, \\
7- \frac{x}{2} , 6 \leq x < 2\pi .
\end{array} \right.
\end{equation*}
\begin{equation*}
y(0) = 0,
\end{equation*}
\begin{equation*}
y^\delta (x) = y(x) + \delta \frac{\sin  k x}{\sqrt{\pi}},
\end{equation*}
\begin{equation*} y' (x) = \left\{
\begin{array}{rcl}
 1 , \ 0 \leq x < 4, \\
0 , \ 4 \leq x < 6, \\
-\frac{1}{2} , 6 \leq x < 2\pi .
\end{array} \right.
\end{equation*}
\end{example}

\subsubsection{Second order}
\begin{example}
Set
\begin{equation*} y(x) = \left\{
\begin{array}{rcl}
  x^3-7x^2 , \ 0 \leq x < 4, \\
  x^2 -16x , \ 4 \leq x < 6, \\
-4x-36, 6 \leq x < 2\pi .
\end{array}\right.
\end{equation*}
\begin{equation*}
y(0) = 0, y'(0) = 0,
\end{equation*}
\begin{equation*}
y^\delta (x) = y(x) + \delta \frac{\sin  k x}{\sqrt{\pi}},
 \end{equation*}
\begin{equation*} y'' (x) = \left\{
\begin{array}{rcl}
6 x - 14 , \ 0 \leq x < 4, \\
2 , \ 4 \leq x < 6, \\
 0 , 6 \leq x < 2\pi .
 \end{array}\right.
\end{equation*}

\end{example}

\subsubsection{Third order}
\begin{example}
Set
\begin{equation*} y(x) =\left\{
 \begin{array}{rcl}
   x^4 + x^3 , \ 0 \leq x < 4, \\
 13x^3 -48x^2+64x , \ 4 \leq x < 6, \\
 186x^2-1340x+2808 , 6 \leq x < 2\pi .
 \end{array} \right.
\end{equation*}
\begin{equation*}
y(0) = 0, y'(0) = 0, y''(0)=0,
\end{equation*}
and
\begin{equation*}
y^\delta (x) = y(x) + \delta \frac{\sin  k x}{\sqrt{\pi}},
\end{equation*}
\begin{equation*} y''' (x) = \left\{
\begin{array}{rcl}
 24 x+6 , \ 0 \leq x < 4, \\
78 , \ 4 \leq x < 6, \\
 0 , 6 \leq x < 2\pi .
 \end{array}\right.
\end{equation*}
\end{example}

\begin{table}
{\begin{tabular}{cccccccc}
\hline
& & $ n $ & $ 4 $ & $ 6 $ & $ 8 $ & $ 16 $& 24  \\
\cline{2-8}
 $ p=1 $& $ \delta_i = 0 $  & $ r $ & 0.2786 & 0.2551 & 0.2294 & 0.1474 & 0.1294 \\
\cline{2-8}
 & $ \delta_i = 0.01 $ & $ r $ & 0.2734 &  0.2486 & $ 0.2216 $ & $ 0.1378 $ & 0.1187 \\
\cline{2-8}
 $ p=2 $& $ \delta_i = 0 $  & $ r $ & 0.4148 & 0.3175 & 0.2754 & 0.2068 & 0.1636 \\
\cline{2-8}
 & $ \delta_i = 0.01 $ & $ r $ & 0.4239 &  0.3323 & 0.2948 & 0.2603 &  0.2667 \\
\cline{2-8}
 $ p=3 $& $ \delta_i = 0 $  & $ r $ & 0.1413 & 0.1185 & 0.1209 & 0.1137  & 0.1490 \\
\cline{2-8}
 & $ \delta_i = 0.01 $ & $ r $ & 0.1446 & 0.1185  & 0.1060 &0.1383& 0.4010 \\
\hline
\end{tabular}}
\caption{Above experiments  correspond to Example 8.2, 8.3 and 8.4 respectively. Case I,II are uniformly set as $ (\delta, \delta_i) = (0.01, 0 ) $ and $ ( \delta, \delta_i ) = ( 0.01, 0.01) $ respectively. Notice that $ r $ denotes the relative error. ($ k=8 $) }
\end{table}
\begin{table}
{\begin{tabular}{cccccccc}
\hline
& & $ n $ & $ 4 $ & $ 6 $ & $ 8 $ & $ 16 $& 24  \\
\cline{2-8}
 $ p=1 $& $ \delta_i = 0 $  & $ t $ & 1.73 & 2.67 & 3.89 & 12.47 & 103.05 \\
\cline{2-8}
 & $ \delta_i = 0.01 $ & $ t $ & 1.55 &  2.66 & 4.29 & 13.80 & 142.73 \\
\cline{2-8}
 $ p=2 $& $ \delta_i = 0 $  & $ t $ & 3.77 & 6.57 & 11.38 & 73.04 & 175.93 \\
\cline{2-8}
 & $ \delta_i = 0.01 $ & $ t $ & 3.95 &  6.46 & 11.09 & 116.55 &  230.05 \\
\cline{2-8}
 $ p=3 $& $ \delta_i = 0 $  & $ t $ & 6.10 & 13.06 & 19.47 & 127.01  & 304.70 \\
\cline{2-8}
 & $ \delta_i = 0.01 $ & $ t $ & 7.79 & 11.45  & 17.89 & 149.72 & 438.42 \\
\hline
\end{tabular}}
\caption{$ t $ denotes the CPU time $ (s) $ for the corresponding experiments in Table 3.}
\end{table}

\begin{figure}[htbp]
\centering
 \includegraphics[width=0.8\textwidth]{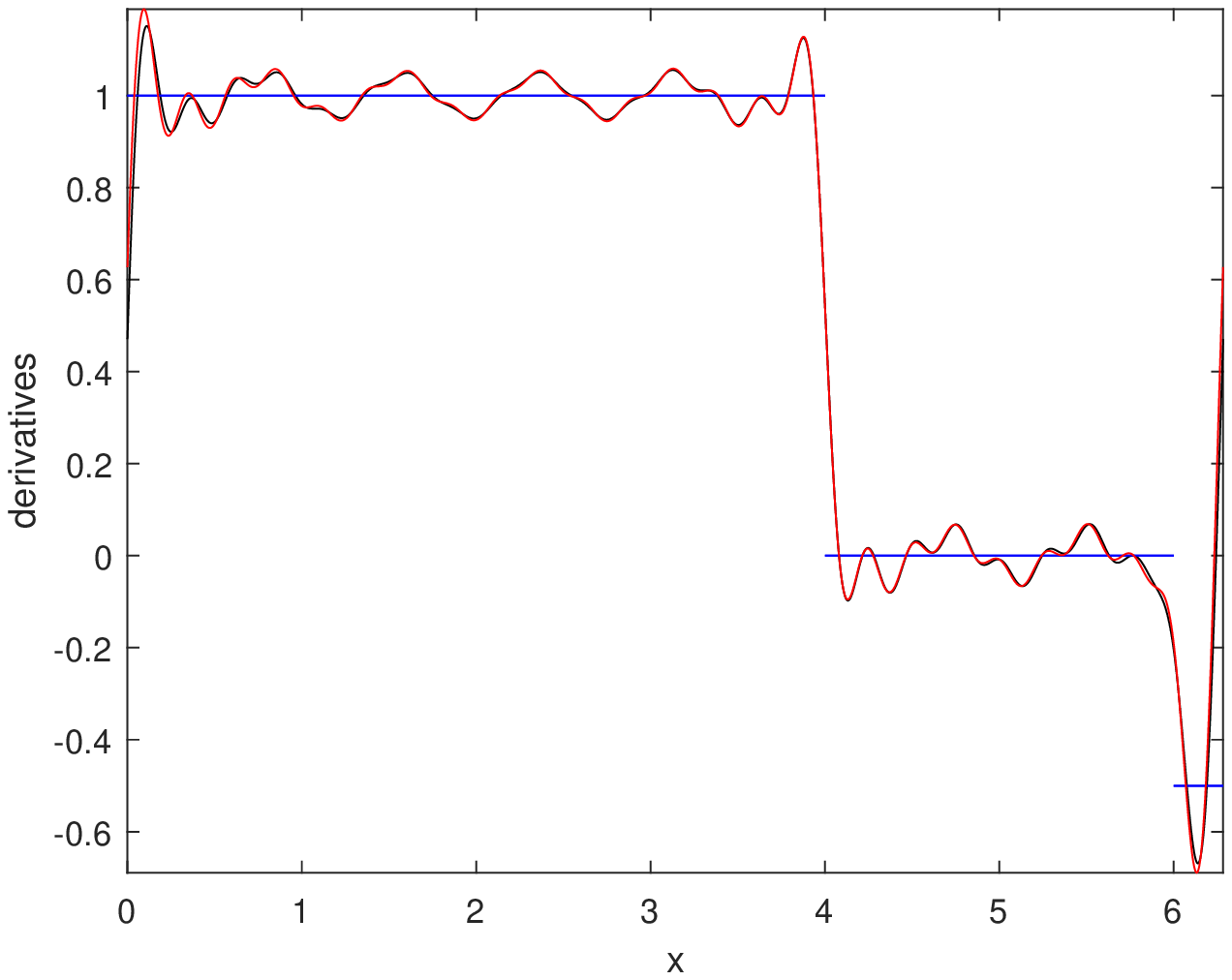}
\caption{The figure corresponds to Example 8.2 where the bule curve denotes the exact derivative, and the red, black curves denote the case $ (\delta, \delta_i, n) = (0.01, 0, 24) $ and $ (\delta, \delta_i, n) = (0.01, 0.01, 24) $ respectively. }
\end{figure}

\begin{figure}[htbp]
\centering
 \includegraphics[width=0.8\textwidth]{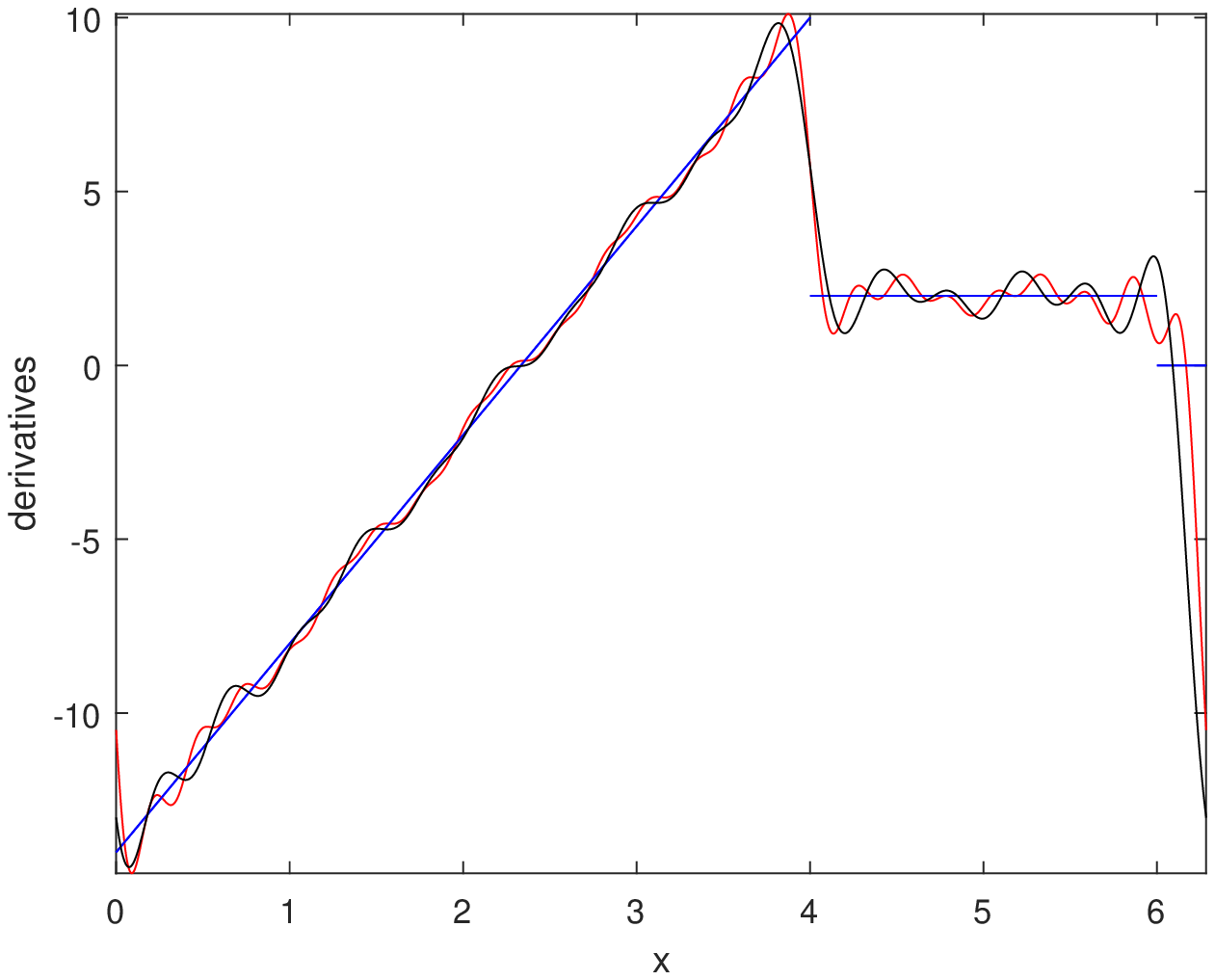}
\caption{The figure corresponds to Example 8.3 where the blue curve denotes the exact derivative, and the red, black curve denote the case $ (\delta, \delta_i, n) = (0.01, 0, 24) $ and $ (\delta, \delta_i, n) = (0.01, 0.01, 16) $ respectively.}
\end{figure}

\begin{figure}[htbp]
\centering
 \includegraphics[width=0.8\textwidth]{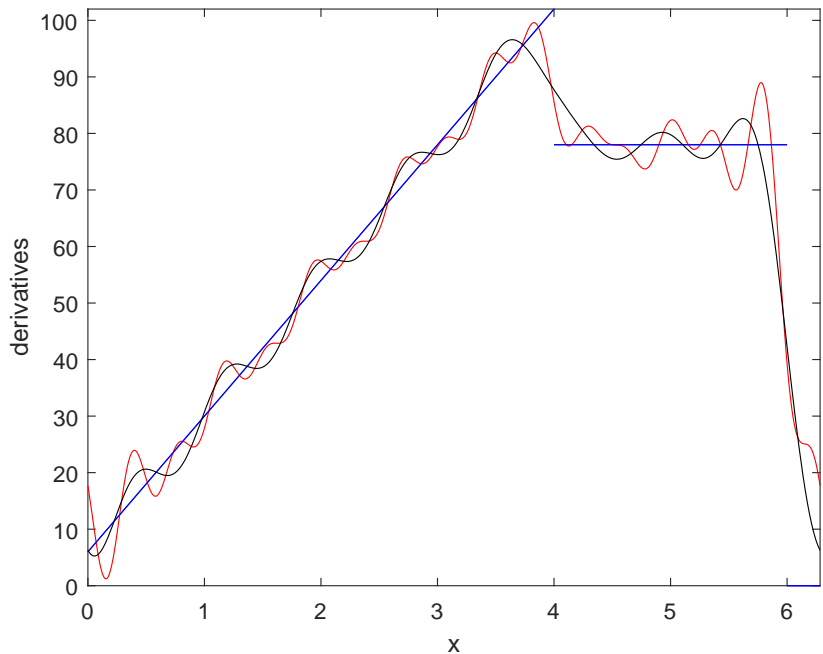}
\caption{The figure corresponds to Example 8.4 where the blue curve denote the exact derivative, and the red, black curves denote the case $ (\delta, \delta_i, n) = (0.01, 0, 16) $ and $ (\delta, \delta_i, n) = (0.01, 0.01, 8) $ respectively.}
\end{figure}

\subsubsection{Unified observation on cases with discontinuous derivative}
 It can be concluded from figure 4,5,6 that, in both cases, when regularization parameters are chosen appropriately, the computational error can be well controlled.
 However, for sake of the intersection of frequency band of $ y $ and $ \delta y $ (this does not happen in the example we list in the subsection 8.1), the good filtering effect disappears in case with discontinuous derivative.
 Besides, we note that when the choice of $ n $ increases to $ \mathcal{U}:=\{ 16, 24 \} $, the CPU time will increase to a considerable amount.

 \section{Conclusion}
The core theoretical work of this paper locates in the uniform upper estimate for
\begin{equation*}
\Vert {A^{(p)}_n}^\dagger P_n A^{(p)} \Vert_{L^2 \to L^2}
\end{equation*}
where $ A^{(p)}, A_n^{(p)} $ are defined in (2.4), (3.3) respectively. This determines the error estimate for approximation error and give a complete answer to regularization procedure.
\newline \indent In experiments, the algorithm has its advantage over other classical regularization method:
 \begin{itemize}
 \item It induces a noise-independent a-priori parameter choice strategy for function of a specific class
\begin{equation*}
y(t) =  \sum^{N_1}_{k=1} a_k \cos kt + \sum^{N_2}_{k=1} b_k \sin kt + \sum^{p-1}_{k=0} c_k t^k
\end{equation*}
where $ p $ is the order of numerical differentiation. Good filtering effect (error approaches 0) is displayed when the algorithm acts on functions of above class with best parameter choice.
\item  Derivatives discontinuities can also be recovered well although there exists a unknown constant $ \kappa $ to test in experiments.
\end{itemize}

\appendix
\section{Representation of $ M_n^{(p)}$}
\begin{equation*}
M_n^{(p)} = \left(
  \begin{array}{cccc}
    a^{(p)} & u^{(p)}_1 &\cdots & u^{(p)}_n          \\
    {v^{(p)}_1}^T & M^{(p)}_{11} & \cdots & M^{(p)}_{1n}  \\
    \vdots & \vdots & \ddots & \vdots \\
  {v^{(p)}_n}^T & M^{(p)}_{n1}  & \cdots & M^{(p)}_{nn}  \\
  \end{array}\right )_{(2n+1) \times (2n+1)}, \ p=1,2,3.
\end{equation*}
where
\begin{equation*}
a^{(1)} = \pi, a^{(2)} = \frac{2\pi^2}{3}, a^{(3)} =  \frac{\pi^3}{3}
\end{equation*}
 \begin{eqnarray*}
u^{(1)}_k = (0,\frac{\sqrt{2}}{k}), \ v^{(1)}_k = - u^{(1)}_k \\
u^{(2)}_k = (\frac{\sqrt{2}}{k^2},\frac{\sqrt{2}\pi}{k}), \  v^{(2)}_k = (\frac{\sqrt{2}}{k^2}, - \frac{\sqrt{2}\pi}{k}) \\
u^{(3)}_k = ( \frac{\sqrt{2} \pi}{k^2}, \frac{2\sqrt{2} \pi^2}{3 \cdot k}-\frac{\sqrt{2}}{k^3} ), \ v^{(3)}_k =  ( \frac{\sqrt{2} \pi}{k^2}, - \frac{2\sqrt{2} \pi^2}{3 \cdot k} + \frac{\sqrt{2}}{k^3} ), 1\leq k \leq n
\end{eqnarray*}
\begin{equation*}
M^{(1)}_{ij} = 0, \forall i \neq j,
 \end{equation*}
\begin{equation*}
M^{(1)}_{ii} = \left(
\begin{array}{cc}
0 & -\frac{1}{i} \\
\frac{1}{i} & 0 \\
\end{array}\right), \ 1\leq i \leq n.
\end{equation*}

\begin{equation*}
 M^{(2)}_{ij} = \left(
\begin{array}{cc}
0 & 0 \\
0 & -\frac{1}{i \cdot j} \\
\end{array}\right),  \  1 \leq i,j \leq n, \ i \neq j.
 \end{equation*}
\begin{equation*}
M^{(2)}_{ii} = \left(
\begin{array}{cc}
- \frac{1}{i^2} & 0 \\
0 & - \frac{3}{i^2}\\
\end{array}\right), \ 1\leq i \leq n.
\end{equation*}

\begin{equation*}
 M^{(3)}_{ij} = \left(
\begin{array}{cc}
0 & \frac{2}{ i^2 \cdot j } \\
-\frac{2}{ i \cdot j^2} & -\frac{2\pi}{ i \cdot j} \\
\end{array}\right),  \  1 \leq i,j \leq n, \ i \neq j.
 \end{equation*}
\begin{equation*}
M^{(3)}_{ii} = \left(
\begin{array}{cc}
0 & \frac{3}{i^3}  \\
 -\frac{3 }{ i^3} & -\frac{2\pi}{  i^2} \\
\end{array}\right), \ 1\leq i \leq n.
\end{equation*}

\section{Some Fourier expansions}
\begin{lemma}
For $ A^{(p)} $ defined in (2.4), when $ j \geq n+1 $, set
\begin{equation*}
 (P_n A^{(p)} (\frac{ \cos j t }{\sqrt{\pi}}))(x) =c^{(p)}_0 \frac{1}{\sqrt{2\pi}} + \sum^n_{k=1} c^{(p)}_k \frac{ \cos k x}{\sqrt{ \pi }} + \sum^n_{k=1} d^{(p)}_k \frac{ \sin k x}{\sqrt{ \pi }} ,
 \end{equation*}
Then Fourier coefficients are determined as follows:
\begin{equation}
c^{(1)}_0 = c^{(1)}_k = d^{(1)}_k = 0, \ k \in \overline{1,...,n},
\end{equation}
\begin{equation}
c^{(2)}_0=\frac{ \sqrt{2} }{ j^2},  c^{(2)}_k = 0 , d^{(2)}_k = 0, \ k \in \overline{1,...,n},
\end{equation}
\begin{equation}
c^{(3)}_0=\frac{ \sqrt{2} \pi  }{ j^2}, c^{(3)}_k = 0,  d^{(3)}_k = -\frac{2}{k j^2}, \ k \in \overline{1,...,n}.
\end{equation}
\end{lemma}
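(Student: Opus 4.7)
My plan is to compute $A^{(p)}(\cos jt/\sqrt{\pi})$ in closed form for each $p \in \{1,2,3\}$, and then read off the Fourier coefficients of its projection onto $X_n$ using the fact that $j \geq n+1$. The closed forms can be obtained recursively via the identity $A^{(p+1)}\varphi(x) = \int_0^x A^{(p)}\varphi(s)\,ds$ (which follows from differentiating $A^{(p+1)}\varphi$ and noting the zero initial value), so I only need one genuine integration by parts.

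For $p=1$, a direct integration yields $A^{(1)}(\cos jt/\sqrt{\pi})(x) = \sin jx / (j\sqrt{\pi})$. Since $\{1, \cos kx, \sin kx : 1 \leq k \leq n\}$ is $L^2(0,2\pi)$-orthogonal to $\sin jx$ whenever $j \geq n+1$, the projection vanishes identically, giving (B.1). For $p=2$, applying the recursion (or integrating by parts twice) gives
\begin{equation*}
A^{(2)}\Bigl(\frac{\cos jt}{\sqrt{\pi}}\Bigr)(x) = \frac{1-\cos jx}{j^2\sqrt{\pi}}.
\end{equation*}
Projecting onto $X_n$ kills the $\cos jx$ term (since $j\geq n+1$) and leaves the constant $1/(j^2\sqrt{\pi})$, which in the normalized basis equals $(\sqrt{2}/j^2)\cdot (1/\sqrt{2\pi})$; this gives (B.2). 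For $p=3$, integrating once more yields
\begin{equation*}
A^{(3)}\Bigl(\frac{\cos jt}{\sqrt{\pi}}\Bigr)(x) = \frac{x}{j^2\sqrt{\pi}} - \frac{\sin jx}{j^3\sqrt{\pi}}.
\end{equation*}
The second term lies in the orthogonal complement of $X_n$ and contributes nothing. For the first term I would quote the standard Fourier expansion $x = \pi - \sum_{k=1}^\infty (2/k)\sin kx$ on $(0,2\pi)$; projecting onto $X_n$ truncates the sum at $k=n$, so multiplying by $1/(j^2\sqrt{\pi})$ and converting to normalized basis coefficients gives $c_0^{(3)} = \sqrt{2}\pi/j^2$, $c_k^{(3)}=0$, and $d_k^{(3)} = -2/(kj^2)$, which is exactly (B.3).

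Nothing here is truly subtle; the only place to be careful is keeping track of the normalization constants $1/\sqrt{2\pi}$ versus $1/\sqrt{\pi}$ when converting between the raw Fourier series and the basis used in the lemma. The ``hard'' step, if any, is verifying the closed form $A^{(2)}(\cos jt/\sqrt{\pi}) = (1-\cos jx)/(j^2\sqrt{\pi})$, but this is just two integrations by parts, after which everything else is forced by the recursion $A^{(p+1)} = \int_0^\cdot A^{(p)}$ and by the orthogonality of high-frequency modes to $X_n$.
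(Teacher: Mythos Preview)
Your proof is correct. The paper itself states this lemma in Appendix~B without proof, treating the formulas as routine computations; your direct evaluation of $A^{(p)}(\cos jt/\sqrt{\pi})$ via the recursion $A^{(p+1)}\varphi=\int_0^{\cdot}A^{(p)}\varphi$, followed by orthogonality of the $j$-th mode to $X_n$ and (for $p=3$) the standard Fourier expansion of $x$ on $(0,2\pi)$, is exactly the verification the paper leaves to the reader.
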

\begin{lemma}
For $ A^{(p)} $ defined in (2.4), when $ j \geq n+1 $, set
\begin{equation*}
 (P_n A^{(p)} (\frac{ \sin j t }{\sqrt{\pi}}))(x) =s^{(p)}_0 \frac{1}{\sqrt{2\pi}} + \sum^n_{k=1} s^{(p)}_k \frac{ \cos k x}{\sqrt{ \pi }} + \sum^n_{k=1} t^{(p)}_k \frac{ \sin k x}{\sqrt{ \pi }} ,
 \end{equation*}
 Then Fourier coefficients are determined as follows£º
 \begin{equation}
s^{(1)}_0 = \frac{\sqrt{2} }{ j}, \  s^{(1)}_k = 0, \  t^{(1)}_k = 0, \  k \in \overline{1,...,n},
\end{equation}
 \begin{equation}
s^{(2)}_0 = \frac{\sqrt{2} \pi }{ j}, \  s^{(2)}_k = 0, \  t^{(2)}_k = -\frac{2}{ k j},  \  k \in \overline{1,...,n},
\end{equation}
\begin{equation}
s^{(3)}_0 = \frac{2\sqrt{2} \pi^2}{3 j}-\frac{\sqrt{2}}{ j^3}, \  s^{(3)}_k =\frac{2}{ k^2 j}, \  t^{(3)}_k = -\frac{2 \pi }{ k j}  \  k \in \overline{1,...,n}.
\end{equation}
\end{lemma}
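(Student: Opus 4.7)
The plan is to reduce everything to three direct computations, one for each $p$, and then read off Fourier coefficients against the basis of $X_n$.

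First I would explicitly evaluate $A^{(p)}(\sin jt/\sqrt{\pi})(x)$ by repeated integration by parts. For $p=1$ this is immediate: $A^{(1)}(\sin jt/\sqrt{\pi})(x) = \frac{1-\cos jx}{j\sqrt{\pi}}$. For $p=2$, one integration by parts applied to $\int_0^x (x-t)\sin jt\,dt$ gives the clean form $A^{(2)}(\sin jt/\sqrt{\pi})(x) = \frac{x}{j\sqrt{\pi}} - \frac{\sin jx}{j^2\sqrt{\pi}}$. For $p=3$, two integrations by parts in $\tfrac{1}{2}\int_0^x (x-t)^2 \sin jt\,dt$ produce $A^{(3)}(\sin jt/\sqrt{\pi})(x) = \frac{x^2}{2j\sqrt{\pi}} - \frac{1}{j^3\sqrt{\pi}} + \frac{\cos jx}{j^3\sqrt{\pi}}$. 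In each case the integration by parts generates a polynomial part and a trigonometric part of frequency exactly $j$.

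Second, because $j \geq n+1$, the functions $\sin jx/\sqrt{\pi}$ and $\cos jx/\sqrt{\pi}$ are orthogonal to every element of $X_n$, so $P_n$ kills them. What survives is the polynomial part together with any constant. Thus $P_n A^{(p)}(\sin jt/\sqrt{\pi})$ reduces to $\frac{1}{j\sqrt{\pi}}$ for $p=1$, to $\frac{1}{j\sqrt{\pi}}\,P_n x$ for $p=2$, and to $\frac{1}{2j\sqrt{\pi}}\,P_n x^2 - \frac{1}{j^3\sqrt{\pi}}$ for $p=3$.

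Third, I invoke the standard Fourier expansions on $(0,2\pi)$,
\begin{equation*}
x = \pi - 2\sum_{k=1}^\infty \frac{\sin kx}{k}, \qquad
x^2 = \frac{4\pi^2}{3} + 4\sum_{k=1}^\infty \frac{\cos kx}{k^2} - 4\pi \sum_{k=1}^\infty \frac{\sin kx}{k},
\end{equation*}
which can be verified by direct integration. Rewriting each series in the orthonormal basis $\{1/\sqrt{2\pi},\cos kt/\sqrt{\pi},\sin kt/\sqrt{\pi}\}$ and substituting into the simplified forms of step two, I read off the Fourier coefficients of $P_n A^{(p)}(\sin jt/\sqrt{\pi})$ against that basis. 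The constants produced by $\pi/(j\sqrt{\pi})$, $\sqrt{\pi}/j$, and $2\pi^{3/2}/(3j)-1/(j^3\sqrt{\pi})$ become, after multiplication by $\sqrt{2\pi}$, the claimed $s_0^{(p)}$; the $\sin kx$ Fourier coefficients of $x$ and $x^2$ produce the claimed $t_k^{(p)}$; and only for $p=3$ does a nonzero $s_k^{(3)}$ appear, coming from the $\cos kx/k^2$ contribution in the expansion of $x^2$.

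There is no conceptual obstacle here; the only risk is bookkeeping the $\sqrt{\pi}$, $\sqrt{2\pi}$ normalization constants consistently between the operator formula $(p-1)!^{-1}\int_0^x (x-t)^{p-1}\varphi(t)\,dt$ and the orthonormal trigonometric basis. I would carry out the three cases independently and compare with (B.4)--(B.6), which serves as a built-in sanity check.
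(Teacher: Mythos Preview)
Your proposal is correct and is precisely the natural route: compute $A^{(p)}(\sin jt/\sqrt{\pi})$ by integration by parts, drop the frequency-$j$ trigonometric terms under $P_n$ since $j\geq n+1$, and expand the surviving polynomial part via the standard Fourier series of $x$ and $x^2$ on $(0,2\pi)$. The paper itself records this lemma in the appendix as a direct computational fact without spelling out a proof, so your argument is exactly what fills that gap.
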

\section{Some Inequalities}
 \begin{proposition}
 Set $ L_n, K_n, F_n, T_n $ defined as in Theorem 4.1.
Then
\begin{equation}
  L^{-1}_n \in [10n, 36n ] \label{eq:(A)},
\end{equation}
\begin{equation}
K_n \in [-\frac{2}{n}, -\frac{1}{2n}],
\end{equation}
\begin{equation}
F_n \in [\frac{\sqrt{2}}{n(2n+1)},\frac{4\sqrt{2}}{n(2n+1)} ],
\end{equation}
\begin{equation}
T_n \in [ \frac{1}{396} \frac{1}{n}\frac{1}{2n+1}, \frac{3}{40} \frac{1}{n(2n+1)}] , \ n \geq 5.
\end{equation}
\end{proposition}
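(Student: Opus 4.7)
The strategy is to exploit the identity $\sum_{k=1}^\infty \frac{1}{k^2} = \frac{\pi^2}{6}$ to rewrite both $L_n$ and $T_n$ in a form where all the slowly decaying constant contributions cancel, leaving only a combination of a simple rational function and a tail sum $\sum_{k\geq n+1} k^{-2}$. A direct computation gives
\begin{equation*}
L_n = \frac{1}{4(2n+1)} - \frac{1}{2\pi^2}\sum_{k=n+1}^\infty\frac{1}{k^2},
\end{equation*}
and after substituting $u=2n+1$ and collecting the constant terms $\tfrac{1}{12}-\tfrac{1}{3}+\tfrac{1}{4}=0$,
\begin{equation*}
T_n = \frac{1}{4(2n+1)^2} - \frac{1}{\pi^2(2n+1)}\sum_{k=n+1}^\infty\frac{1}{k^2}.
\end{equation*}
I then control the tail sum by the elementary telescoping inequalities $\frac{1}{k(k+1)}\leq \frac{1}{k^2}\leq \frac{1}{(k-1)k}$, which yield $\frac{1}{n+1}\leq \sum_{k\geq n+1}\frac{1}{k^2}\leq \frac{1}{n}$.

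For (C.1), multiplying by $n$ gives
\begin{equation*}
\frac{n}{4(2n+1)}-\frac{1}{2\pi^2}\leq nL_n \leq \frac{n}{4(2n+1)}-\frac{n}{2\pi^2(n+1)}.
\end{equation*}
Using the monotone bounds $\frac{n}{4(2n+1)}\in[\tfrac{1}{12},\tfrac{1}{8})$ and $\frac{n}{n+1}\geq\frac{1}{2}$ for $n\geq 1$, the claim $nL_n\in[\tfrac{1}{36},\tfrac{1}{10}]$ reduces to the two numerical inequalities $\tfrac{1}{12}-\tfrac{1}{2\pi^2}\geq \tfrac{1}{36}$ (equivalent to $\pi^2\geq 9$) and $\tfrac{1}{8}-\tfrac{1}{4\pi^2}\leq \tfrac{1}{10}$ (equivalent to $\pi^2\leq 10$). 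Parts (C.2) and (C.3) are then immediate: since $K_n=-2\pi^2 L_n$ and $F_n=\frac{4\sqrt{2}\pi^2}{2n+1}L_n$, the bounds on $L_n$ translate to $K_n\in[-\tfrac{\pi^2}{5n},-\tfrac{\pi^2}{18n}]$ and $F_n\in[\tfrac{\sqrt{2}\pi^2}{9n(2n+1)},\tfrac{2\sqrt{2}\pi^2}{5n(2n+1)}]$, and the stated intervals are recovered again via $9\leq\pi^2\leq 10$.

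For (C.4), multiplying the simplified formula for $T_n$ by $n(2n+1)$ yields
\begin{equation*}
\frac{n}{4(2n+1)}-\frac{1}{\pi^2}\leq n(2n+1)T_n\leq \frac{n}{4(2n+1)}-\frac{n}{\pi^2(n+1)}.
\end{equation*}
The restriction $n\geq 5$ enters through the sharper uniform estimates $\frac{n}{4(2n+1)}\geq\frac{5}{44}$ (lower side) and $\frac{n}{n+1}\geq \frac{5}{6}$ (upper side), which give $n(2n+1)T_n\geq \tfrac{5}{44}-\tfrac{1}{\pi^2}$ and $n(2n+1)T_n\leq \tfrac{1}{8}-\tfrac{5}{6\pi^2}$. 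The target $n(2n+1)T_n\in[\tfrac{1}{396},\tfrac{3}{40}]$ then reduces to $\pi^2\geq 9$ (using $\tfrac{5}{44}-\tfrac{1}{9}=\tfrac{1}{396}$) and $\pi^2\leq\tfrac{50}{3}$ (using $\tfrac{1}{8}-\tfrac{1}{20}=\tfrac{3}{40}$).

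The main obstacle is not any single estimate—each step is elementary—but the bookkeeping required to ensure every numerical target reduces cleanly to $9\leq\pi^2\leq 10$ (or the slightly weaker $\pi^2\leq 50/3$ for the upper part of (C.4)). The restriction $n\geq 5$ in (C.4) is genuine: for small $n$ the negative contribution of the tail sum overwhelms $\frac{n}{4(2n+1)}$, so $T_n$ can be near zero or negative and the positivity claim fails from these crude bounds for $n<5$.
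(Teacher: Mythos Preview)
Your argument is correct. The key identity $L_n=\frac{1}{4(2n+1)}-\frac{1}{2\pi^2}\sum_{k>n}k^{-2}$ and its analogue $T_n=\frac{1}{4(2n+1)^2}-\frac{1}{\pi^2(2n+1)}\sum_{k>n}k^{-2}$ are verified by direct substitution of $S_n=\frac{\pi^2}{6}-\sum_{k>n}k^{-2}$, and the subsequent reductions to $9\leq\pi^2\leq 10$ (and $\pi^2\leq 50/3$ for the upper part of (C.4)) all check out numerically.

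There is nothing to compare against: the paper merely states Proposition~C.1 in the appendix and gives no proof. Your write-up therefore supplies what the paper omits. One minor remark worth making explicit for the reader is that the lower bound in (C.1) already requires $L_n>0$, which is not obvious from the defining expression but follows immediately from your rewritten form together with $\sum_{k>n}k^{-2}\leq\frac{1}{n}<\frac{\pi^2}{2(2n+1)}$; this positivity is what makes the passage from bounds on $nL_n$ to bounds on $L_n^{-1}$ legitimate.
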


\section*{Acknowledgement}
I would like to thank Professor Nailin Du for his helpful discussions and suggestion that
 extend numerical differentiation into higher order. And the author thank
 Professor Hua Xiang for his comments. Besides, I really appreciate the help in figure modification from  Nianci Wu.
 The work of Yidong Luo is supported by the National Natural Science
 Foundation of China under grants 11571265 and NSFC-RGC No. 11661161017.
\section*{References}

\bibliographystyle{elsarticle-num-names.bst}

\end{document}